\documentclass[12pt]{article}
\usepackage{amsfonts}
\usepackage{amsthm}
\usepackage{url}
\usepackage{mathtools}
\usepackage[all]{xy}
\usepackage{amssymb}
\usepackage{color}
\usepackage{mathrsfs}
\usepackage{tikz}
\usepackage{hyperref}
\usetikzlibrary{calc,intersections,through,backgrounds}

\usepackage{pdfsync}
\newtheorem{thm}{Theorem}[section]
\newtheorem{prop}[thm]{Proposition}
\newtheorem{lemma}[thm]{Lemma}
\newtheorem{coro}[thm]{Corollary}
\newtheorem{definition}[thm]{Definition}

\newtheorem{rem}{Remark}[section]

\newcommand{\Nat}{{\mathbb N}}

\newcommand{\Real}{{\mathbb R}}

\newcommand{\cstar}{c_*}
\newcommand{\eps}{\varepsilon}
\newcommand{\dd}{\,\mathrm d}

\makeatletter
\let\c@equation\c@thm
\makeatother
\numberwithin{equation}{section}

\title{A few remarks on the B\'{a}ez-Duarte Criterion}

\author{Alexandre Pyvovarov}

\date{\today}

\begin{document}

	\maketitle
	
	\begin{abstract}
		
	We study exponentially damped M\"obius approximants in $\mathscr H=L^2([1,\infty),dt/t^{-2})$. With \[ \gamma_n(t)=\left\lfloor\frac tn\right\rfloor -\frac{\lfloor t\rfloor}{n},\qquad f(u)(t)=\sum_{n\ge1}\mu(n)e^{-nu}\gamma_n(t),\] we compute the relevant scalar products, characterize the M\"obius coefficients as the unique coefficients giving pointwise convergence to the constant function, and prove $\langle1 \mid f(u)\rangle\to1$. Vasyunin's formula expresses $F(e^{-u})=\|f(u)\|_2^2$ as an arithmetic cotangent sum. To analyze $F(x)$ as $x\uparrow 1$, we define the canonical third-order truncation $\mathcal F_{[3]}$ of $F$ by deleting the sole remainder $\rho_3$. We prove exact edge and residue-character cancellations, initial-edge asymptotics, finite-scale formulas, and \[ \mathcal F_{[3]}(x)\ll \frac{\log^2\!\bigl(e/(1-x)\bigr)}{1-x}. \] For the terms containing $\rho_3$, we prove initial-edge asymptotics, and a finite-scale criterion. The unresolved boundedness problem is thereby reduced to explicit global bilinear cancellation.
		
	\end{abstract}
	
	\tableofcontents
	
	\section{Setup}
	
	There are numerous statements equivalent to the Riemann Hypothesis. The core of this paper is the strong version of the Nyman-Beurling criterion. Let $\{ .\}$ be the fractional part and let $\mathcal{H}$ be the Hilbert space of the square integrable functions. Let $\mathcal{B}$ be the subspace of $\mathcal{H}$  generated by the family $\{x \mapsto \{ \frac{1}{ax} \} |  a \in \mathbb{R} , a \geq 1 \}$.
	
	The Nyman-Beurling dit que states that $\mathrm{\textbf{1}}_{]0,1]} \in  \overline{\mathcal{B}}$ is equivalent to the Riemann hypothesis, where $\overline{\mathcal{B}}$ is the closure of $\mathcal{B}$.
	
	The statement above can be replaced by a stronger condition. Let $\mathcal{B}^{nat}$ be the subspace of $\mathcal{H}$  generated by the family $\{x \mapsto \{ \frac{1}{ax} \} | a \in \mathbb{N}-\{0\} \}$. In 2002 B\'{a}ez-Duarte has proven in \cite{Baez2003} that $\mathrm{\textbf{1}}_{]0,1]} \in  \overline{\mathcal{B}^{nat}}$ is equivalent to the Riemann Hypothesis.
	
	\subsection{Notations. Definitions.}
	
	From now on let $k$ and $n$ strictly positive integers, let $\gamma$ be the Euler constant, and let $(n,k)$ be the gcd of $n$ and $k$.
	
	Let $\mathscr H=\{ f :[1,+\infty[\rightarrow \mathbb{C} | \int\limits_{1}^{+\infty} |f(t)|^{2} t^{-2} dt <  +\infty \}$ be the Hilbert space equipped with the scalar product $\langle f | g \rangle = \int\limits_{1}^{+\infty} f(t) \overline{g(t)} t^{-2} dt$. Let $\|.\|_{2}$ the associated norm.
	
	Let $r_{n}^{k}$ be the reminder of the euclidean division of $k$ by $n$ and let $q_{n}^{k}$ be the quotient. These integers are uniquely defined by  $k=q_{n}^{k}n + r_{n}^{k}$ and $0 \leq r_{n}^{k} \leq n-1$. Let $[.]$ be the floor function and let $\{.\}$ be the fractional part. Obviously $[\frac{k}{n}]=q_{n}^{k}$ and $\{\frac{k}{n}\} = \frac{r_{n}^{k}}{n}$. From now on these notations will be used interchangeably.
	
	Let us define two very useful elements of $\mathscr H$. If $\mathrm{Re}(z)>1/2$ then $t \mapsto t^{1-z} \in \mathscr H$ and $\forall n \geq 1$, let  $\gamma_{n} : [1,+\infty[\rightarrow \mathbb{R}$ be a function defined by $\gamma_{n}(t) = [\frac{t}{n}] - \frac{[t]}{n}$. Observe that $\forall n \geq 1$, $\gamma_{n} \in \mathscr H$. Indeed, $\forall t \geq 1 $, $|\gamma_{n}(t)| < 1$ and $t \mapsto t^{-2}$ is integrable on $[1,+\infty[$, therefore  $\forall n \geq 1$, $\gamma_{n} \in \mathscr H$.
	
	\subsection{A modified B\'{a}ez-Duarte criterion.}
	
	Let $B^{nat}$ be the subspace of $\mathscr H$ generated par $(\gamma_{n})_{n \geq 1}$. Let us observe that $1 \in \overline{B^{nat}}$, where $1$ is the constant function equal to  $1$, is equivalent to the Riemann Hypothesis according to the B\'{a}ez-Duarte criterion. A change of variable $x=1/t$ allows us to go from $\mathcal{H}$ to $\mathscr H$ .
	
	Moreover, there is a simple argument allowing to show that $1 \in \overline{B^{nat}}$ implies the Riemann Hypothesis. We begin with a representation $\zeta$ function by an integral:
	
	$$-\frac{\zeta(z)}{z}= \int\limits_{0}^{+\infty}x^{z-1}\{ \frac{1}{x} \} dx = \int\limits_{1}^{+\infty}t^{1-z}\{ t \} \frac{dt}{t^{2}}$$
	
	\noindent which holds for $1>\mathrm{Re}(z)>0$. By "iteration" the expression above we can derive $\langle t^{1-z} | \gamma_{n} \rangle= \left(\frac{1}{n^{z}}-\frac{1}{n} \right) \frac{\zeta(z)}{z}$. Let us observe that the integral on the left hand side, can be interpreted as a scalar product in $\mathscr H$ when $\mathrm{Re}(z)>1/2$.
	
	Now let's assume that $1 \in \overline{B^{nat}}$, therefore there is a sequence $(s_{n})$ of elements in $B^{nat}$ that converges to $1$ for the norm in $\mathscr H$.  Furthermore, let's assume that there is a complex number $z_{0}$ such that $\mathrm{Re}(z_{0})>1/2$ and $\zeta(z_{0})=0$. It follows: 
	
	$$\langle t^{1-z} | s_{n} \rangle=0$$
	
	When $n$ goes to infinity we get: $\frac{1}{z_{0}}=0$. Which is a contradiction. We must have $\mathrm{Re}(z)>1/2$, $\zeta(z)\neq 0$. The functional equation of the $\zeta$ function allows us to see that we also have $\mathrm{Re}(z)<1/2$, $\zeta(z)\neq 0$.
	
	So the statement $1 \in \overline{B^{nat}}$ implies the Riemann hypothesis. In what follows we will try to approximate $1$ by some linear combinations of elements of $B^{nat}$.
	
	\subsection{Mobius function and the prime number theorem}
	
	Let us recall a few elementary statements involving the Mobius function. The Mobius is defined by:
	
	\begin{equation}
		\mu(n) = \left\{\begin{array}{ll}
			1 & n=1 \\
			0 & p^2|n \\
			(-1)^{r} & n=p_1..p_r\\
		\end{array} \right.
	\end{equation}
	where, $p$ is a prime number and the $p_i$ are all distinct prime numbers.
	
	\begin{lemma}
		$\forall t \geq 1$ $\sum\limits_{n=1}^{+\infty} \mu(n)[\frac{t}{n}] =1$ or equivalently $\forall k \geq 1$ $\sum\limits_{n=1}^{k} \mu(n)q_{n}^{k} =1$.
	\end{lemma}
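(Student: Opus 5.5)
The plan is to rewrite $[\frac{t}{n}]$ as a counting function and then exchange the order of summation. For $t \geq 1$ and $n \geq 1$, $[\frac{t}{n}]$ is the number of integers $m$ with $1 \leq m \leq t/n$, i.e. with $mn \leq t$. Only indices $n \leq t$ contribute, since $[\frac{t}{n}]=0$ for $n>t$. So the series is a finite sum and no convergence issue arises. We get
\[
\sum_{n=1}^{+\infty} \mu(n)\Bigl[\frac{t}{n}\Bigr] = \sum_{n \leq t} \mu(n) \sum_{\substack{m\geq 1\\ mn \leq t}} 1 = \sum_{1 \leq k \leq t} \ \sum_{n \mid k} \mu(n),
\]
where in the last step the pairs $(n,m)$ are regrouped according to the product $k=nm$. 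Each integer $k \leq t$ arises exactly once for each divisor $n$ of $k$.

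The second ingredient is the classical identity $\sum_{n\mid k}\mu(n) = 1$ if $k=1$ and $0$ if $k>1$. I would prove it briefly. Write $k = p_1^{a_1}\cdots p_r^{a_r}$ with $r \geq 1$. By the definition of $\mu$, only squarefree divisors contribute, namely the products of subsets of $\{p_1,\dots,p_r\}$. Hence the sum equals $\sum_{j=0}^{r}\binom{r}{j}(-1)^j=(1-1)^r=0$. Since $t \geq 1$, the integer $k=1$ always appears in the outer sum, so the total is exactly $1$.

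For the equivalent formulation, note that $[\frac{t}{n}]=[\frac{[t]}{n}]$ for $n$ a positive integer. So the first statement only needs to be checked at integers $t=k\geq 1$. There $[\frac{k}{n}]=q_n^k$, and the terms with $n>k$ vanish, so the sum reduces to $\sum_{n=1}^{k}\mu(n)q_n^k$. Conversely, the integer version applied to $k=[t]$ gives the real version.

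There is no serious obstacle. The only points requiring care are the regrouping of the double sum by the product $k=nm$ and the justification that the infinite series is actually finite. The identity $\sum_{n\mid k}\mu(n)=[k=1]$ is the only genuinely arithmetic input.
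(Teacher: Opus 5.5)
Your proof is correct and is essentially the paper's argument: the paper proves $\sum_{d\mid n}\mu(d)=0$ for $n\ge 2$ (and $=1$ for $n=1$), then shows $\sum_{n}\mu(n)g(t/n)=f(t)$ for $g(t)=\sum_n f(t/n)$ by regrouping the double sum according to the product $k=mn$ exactly as you do, and specializes to $f$ the indicator of $[1,+\infty[$, so that $g(t)=[t]$. Your explicit check of the equivalence with the integer formulation via $[t/n]=[[t]/n]$ is a detail the paper leaves implicit.
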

	
	\begin{proof}
		
		$\forall n \geq 2$, $n=\prod\limits_{i=1}^{r} p_{i}^{a_{i}}$ then,
		
		$$\sum_{d|n}\mu(d)= \mu(1) + \sum_{i_{1}<i_{2}}\mu(p_{i_{1}}p_{i_{2}})+...+\mu(p_{1}...p_{r})=\sum\limits_{j=0}^{r}(-1)^{j}=(1-1)^{r}=0$$
		
		\noindent and $\sum_{d|1}\mu(d)=1$. Let $f :[0,+\infty[\rightarrow \mathbb{C}$ such that $f(t)=0$ for $t<1$ and $g :[0,+\infty[\rightarrow \mathbb{C}$ defined by $g(t)= \sum\limits_{n=1}^{+\infty}f(\frac{t}{n})$. We have:
		
		$$\sum\limits_{n=1}^{+\infty}\mu(n)g(\frac{t}{n})=\sum_{n \leq x} \mu(n) \sum_{m \leq t/n}f(\frac{t}{mn})=\sum_{k \leq x} f(\frac{t}{k}) \sum_{mn=k }\mu(n)=f(t)$$
		
		Choosing $f$ such that $f(t)=0$ for $t<1$ and $f(t)=1$ for $t \geq 1$ we get the stated result.
	\end{proof}
	
	\medskip
	
	The prime number theorem is equivalent to the convergence of the series $\sum \frac{\mu(n)}{n}$ and $\sum_{n=1}^{+\infty} \frac{\mu(n)}{n} = 0$. See \cite{Ten95}.
	
	\medskip
	
	\begin{coro} \label{mobius}
		$\forall t \geq 1$ $\sum\limits_{n=1}^{+\infty} \mu(n)\gamma_{n}(t) =1$ and $\forall k \geq 1$ $\sum\limits_{n=1}^{+\infty} \frac{\mu(n)}{n} r_{n}^{k} =-1$.
	\end{coro}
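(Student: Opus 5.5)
The plan is to combine the preceding Lemma with the prime number theorem in the form $\sum_{n\ge1}\mu(n)/n=0$. Since $\gamma_n(t)=[\frac tn]-\frac{[t]}{n}$, we split
$$\sum_{n=1}^{N}\mu(n)\gamma_n(t)=\sum_{n=1}^{N}\mu(n)\Bigl[\frac tn\Bigr]-[t]\sum_{n=1}^{N}\frac{\mu(n)}{n}.$$
Each piece is handled separately, and no absolute convergence is needed.

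For the first piece, once $N>t$ every term $[\frac tn]$ vanishes. The first sum therefore stabilises, and by the Lemma it equals $1$.

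For the second piece, $[t]$ is a fixed integer, and $\sum_{n\le N}\mu(n)/n\to0$ by the prime number theorem (as recalled above, citing \cite{Ten95}). Hence the partial sums converge, and $\sum_{n\ge1}\mu(n)\gamma_n(t)=1-[t]\cdot0=1$. The only real input here is the convergence of $\sum\mu(n)/n$ to $0$, which I take as given.

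For the second identity, fix $k\ge1$ and write $r_n^k=k-nq_n^k$, so that
$$\frac{r_n^k}{n}=\frac kn-q_n^k.$$
For $n>k$ we have $q_n^k=0$ and $r_n^k=k$. Then
$$\sum_{n\le N}\frac{\mu(n)}{n}r_n^k=k\sum_{n\le N}\frac{\mu(n)}{n}-\sum_{n\le \min(N,k)}\mu(n)q_n^k.$$
Letting $N\to\infty$, the first term tends to $k\cdot 0=0$ by the prime number theorem. The second term equals $1$ by the finite form of the Lemma, $\sum_{n=1}^k\mu(n)q_n^k=1$. This gives $-1$.

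Alternatively, the second identity follows from the first at $t=k$, since $\gamma_n(k)=q_n^k-\frac kn=-\frac{r_n^k}{n}$. There is no genuine obstacle: one only has to note that the series converge conditionally, via the prime number theorem, and not absolutely. The tail $\sum_{n>k}$ must therefore be treated as a multiple of the convergent series $\sum\mu(n)/n$, rather than bounded termwise.
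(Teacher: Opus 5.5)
Your argument is correct and is exactly the derivation the paper intends: the corollary is stated without proof, immediately after the Lemma $\sum_{n}\mu(n)[t/n]=1$ and the prime number theorem in the form $\sum_{n\ge1}\mu(n)/n=0$. You combine these two facts by splitting $\gamma_n(t)=[t/n]-[t]/n$ and by using $\gamma_n(k)=-r_n^k/n$ (equivalently $r_n^k/n=k/n-q_n^k$), and you rightly treat the tail as $k$ times the conditionally convergent series $\sum\mu(n)/n$ rather than bounding it termwise.
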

	
	\medskip
	\noindent Let $\alpha(x)= \sum_{n \leq x} \frac{\mu(n)}{n}$, we have the following bound:
	\medskip
	
	\begin{lemma}\label{bigO}
		$\alpha(x)=O(e^{-c\sqrt{\log x}})$.
	\end{lemma}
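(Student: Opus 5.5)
The plan is to deduce the bound for $\alpha(x)$ from the classical estimate with de la Vallée Poussin error term for the Mertens function,
\[
M(x)=\sum_{n\le x}\mu(n)=O\!\left(x e^{-c_1\sqrt{\log x}}\right),
\]
which I take as known from the prime number theorem with error term (see \cite{Ten95}). The link between $M$ and $\alpha$ is Abel summation. The identity $\sum_{n\ge1}\mu(n)/n=0$ recalled above will let us write $\alpha(x)$ as a tail, which is what we actually estimate.

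\textbf{Step 1 (tail representation).} By partial summation, for $x\ge1$,
\[
\alpha(x)=\frac{M(x)}{x}+\int_1^x \frac{M(t)}{t^2}\dd t .
\]
By the bound on $M$, the integral $\int_1^\infty M(t)t^{-2}\dd t$ converges absolutely, and $M(x)/x\to0$. Letting $x\to\infty$ therefore gives $\int_1^\infty M(t)t^{-2}\dd t=\lim\alpha(x)=0$, using the prime number theorem in the form stated above. Hence
\[
\alpha(x)=\frac{M(x)}{x}-\int_x^\infty \frac{M(t)}{t^2}\dd t .
\]

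\textbf{Step 2 (estimating the tail).} The first term is $O(e^{-c_1\sqrt{\log x}})$ directly. For the integral, substitute $t=e^{v}$ to get
\[
\int_x^\infty \frac{e^{-c_1\sqrt{\log t}}}{t}\dd t=\int_{\log x}^\infty e^{-c_1\sqrt v}\dd v .
\]
With $w=\sqrt v$ this equals $2\int_{\sqrt{\log x}}^\infty w e^{-c_1 w}\dd w$, which is $O\bigl((1+\sqrt{\log x})e^{-c_1\sqrt{\log x}}\bigr)$. The polynomial factor $\sqrt{\log x}$ is absorbed by shrinking the constant, so the tail is $O(e^{-c\sqrt{\log x}})$ for any $c<c_1$. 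This gives the lemma.

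\textbf{Main obstacle.} The real content is the input $M(x)\ll x e^{-c_1\sqrt{\log x}}$, which rests on the zero-free region $\sigma>1-c/\log(|t|+2)$ for $\zeta$. I would not reprove it but cite it from \cite{Ten95}. If a self-contained argument were required, I would apply Perron's formula to $1/\zeta(s)$, using the bound $1/\zeta(s)\ll\log(|t|+2)$ in the zero-free region. I would then shift the contour to $\sigma=1-c/\log T$ and choose $T=e^{\sqrt{\log x}}$. That is the only delicate part of the argument; the rest is the elementary partial summation above.
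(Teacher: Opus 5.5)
Your argument is correct, but it takes a different route from the paper.

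\textbf{The paper's route.} The paper applies the effective Perron formula directly to $\sum_n \mu(n)n^{-1-w}=1/\zeta(1+w)$, with $\kappa=1/\log x$. It moves the segment to $\Re w=-c/\log T$, uses $1/\zeta\ll\log T$ there, and chooses $T=\exp(\sqrt{c\log x})$. Since $x^w/(w\,\zeta(1+w))$ is regular at $w=0$, the residue theorem produces no main term. So the same computation gives both the value $\sum_n\mu(n)/n=0$ and the rate of convergence; the identity is an output, not an input.

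\textbf{Your route.} You take the de la Vall\'ee Poussin bound $M(x)\ll x e^{-c_1\sqrt{\log x}}$ as a black box and transfer it to $\alpha$ by partial summation, via the tail identity $\alpha(x)=M(x)/x-\int_x^\infty M(t)t^{-2}\dd t$. This needs $\sum_n\mu(n)/n=0$ as an extra input. You cite it, which is legitimate. It can also be recovered within your framework by letting $s\downarrow1$ in $\int_1^\infty M(t)t^{-s-1}\dd t=1/(s\zeta(s))$; the bound on $M$ makes this integral absolutely convergent on $\Re s\ge1$.

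\textbf{What each buys.} Your version is shorter and purely elementary once the bound on $M$ is granted, but all the analytic content is outsourced to the citation. Your fallback sketch (Perron for $1/\zeta(s)$ to bound $M$) is essentially the paper's argument applied to $M$ instead of $\alpha$. The paper's version is self-contained apart from Perron's formula and the zero-free region. Your route has one extra advantage: it is the same mechanism the paper uses later. Feeding in the Korobov--Vinogradov bound for $M$, and Lemma~\ref{Eint} for the tail integral, gives $\alpha(x)\ll E_{c'}(x)$ by the identical argument.
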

	
	\begin{proof} The first effective Perron formula allows us to write, for $x \geq 1$, $T \geq 2$, $\kappa = 1/\log x$ :
		
		$$\alpha(x)=\frac{1}{2i\pi} \int\limits_{\kappa - iT}^{\kappa + iT}\zeta(1+w)^{-1}\frac{x^{w}}{w}dw + O \left( \sum\limits_{n=1}^{+\infty} \frac{n^{-\kappa-1}}{1+T|\log(x/n)|} \right) $$
		
		The contribution to the error term of the integers $n$ such that $|\log(x/n)| > 1$ is $\ll (\log x)/T$. The complementary contribution is $\ll \frac{1}{x} \sum\limits_{0 \leq h\leq x} \frac{1}{1+Th/x} \ll \frac{1}{x} + (\log x)/T$. Therefore:
		
		$$\alpha(x)=\frac{1}{2i\pi} \int\limits_{\kappa - iT}^{\kappa + iT}\zeta(1+w)^{-1}\frac{x^{w}}{w}dw + O \left( \frac{1}{x} + \frac{(\log x)}{T} \right) $$
		
		Moving the integration segment to the left until  $\mathrm{Re}(w)=-c/\log T$ (where $c$ is an absolute positive constant conveniently choses), then apllying the residue theorem and using the bound $1/\zeta(1-c/\log T +it) \ll \log T$ ($|t|\leq T$), we get:
		
		$$\alpha(x)= O \left( \frac{1}{x} + \frac{(\log x)}{T} + x^{-c/\log T}(\log T)^{2} \right) $$
		
		\noindent The choice $T=\exp(\sqrt{c\log x})$ gives us the desired result \end{proof}
	
	\subsection{Riemann zeta function and the scalar product}
	
	The goal of this section is to derive $\langle t^{1-z} | \gamma_{n} \rangle = \left(\frac{1}{n^{z}}-\frac{1}{n} \right) \frac{\zeta(z)}{z}$, where $\mathrm{Re}(z)>1/2$.
	
	\noindent We have $\forall n\geq 1$ and $\mathrm{Re}(z)>1$ :
	
	$$\langle t^{1-z} | \gamma_{n} \rangle $$
	
	$$= \int\limits_{1}^{+\infty}t^{1-z}\left( [\frac{t}{n}] - \frac{[t]}{n} \right) \frac{dt}{t^{2}}=\underbrace{\int\limits_{0}^{1}t^{1-z}\left( [\frac{t}{n}] - \frac{[t]}{n} \right) \frac{dt}{t^{2}}}_{=0}+\int\limits_{1}^{+\infty}t^{1-z}\left( [\frac{t}{n}] - \frac{[t]}{n} \right) \frac{dt}{t^{2}}$$

	$$=\int\limits_{0}^{+\infty}[\frac{t}{n}]\frac{dt}{t^{1+z}} - \int\limits_{0}^{+\infty}\frac{[t]}{n}\frac{dt}{t^{1+z}}= \left(\frac{1}{n^{z}}-\frac{1}{n}\right) \int\limits_{0}^{+\infty}[t]\frac{dt}{t^{1+z}}= $$
	
	$$=\left(\frac{1}{n^{z}}-\frac{1}{n}\right) \left( \underbrace{\int\limits_{0}^{1}[t]\frac{dt}{t^{1+z}}}_{=0} + \int\limits_{1}^{+\infty}[t]\frac{dt}{t^{1+z}}\right) = \left(\frac{1}{n^{z}}-\frac{1}{n}\right) \int\limits_{1}^{+\infty}[t]\frac{dt}{t^{1+z}}=$$
	
	$$= \left(\frac{1}{n^{z}}-\frac{1}{n}\right) \left( \sum\limits_{k=1}^{+\infty}\int\limits_{k}^{k+1} k \frac{dt}{t^{1+z}} \right) = \left(\frac{1}{n^{z}}-\frac{1}{n}\right) \frac{1}{z} \left( \sum\limits_{k=1}^{+\infty} k \left(\frac{1}{k^{z}}-\frac{1}{(k+1)^{z}} \right) \right)=$$
	
	$$=\left(\frac{1}{n^{z}}-\frac{1}{n}\right) \frac{1}{z} \left( \sum\limits_{k=1}^{+\infty}\left(\frac{k}{k^{z}}-\frac{k+1}{(k+1)^{z}} \right) + \frac{1}{(k+1)^{z}}\right)=$$
	
	$$=\left(\frac{1}{n^{z}}-\frac{1}{n}\right) \frac{1}{z} \left( \sum\limits_{k=1}^{+\infty}\left(\frac{k}{k^{z}}-\frac{k+1}{(k+1)^{z}} \right) + \sum\limits_{k=1}^{+\infty}\frac{1}{(k+1)^{z}}\right)$$
	
	$$=\left(\frac{1}{n^{z}}-\frac{1}{n}\right) \frac{1}{z} \left( 1 + \sum\limits_{k=2}^{+\infty}\frac{1}{k^{z}}\right)=\left(\frac{1}{n^{z}}-\frac{1}{n} \right) \frac{\zeta(z)}{z}$$
	
	\section{Interesting lemmas}
	
	According to \ref{mobius} the most natural way to approximate function $1$ would be to us the Mobius function.
	
	\subsection{In search of an approximation}
	
	We are going to for the sequences $(a_{n})_{n}$ such that the function $f_{N}^{a}=\sum\limits_{n=1}^{N} a_{n}\gamma_{n}$ converges point-wise to $1$.
	
	\medskip
	\medskip
	
	\begin{prop}
		A sequence of complex numbers $(a_{n})_{n}$ such that the function $f_{N}^{a}=\sum\limits_{n=1}^{N} a_{n}\gamma_{n}$ converges point-wise to  $1$ when $N\rightarrow +\infty$, are of the form : $(c,\mu(2),...,\mu(n),...)$, where $c$ is some constant.
	\end{prop}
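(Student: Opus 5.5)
The plan rests on two elementary facts about the functions $\gamma_n$, after which the proof reduces to a triangular linear system in the unknowns $a_n$.

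\emph{Step 1 (the first term is free).} First, $\gamma_1(t)=[t]-[t]=0$ identically. So $a_1$ never contributes to $f_N^a$. This explains the free constant $c$ in the first slot, and from now on only $(a_n)_{n\ge2}$ matters.

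\emph{Step 2 (reduction to integers).} Second, $\gamma_n$ is constant on each interval $[k,k+1)$ with $k\ge1$, and there
$$\gamma_n(t)=q_n^k-\frac{k}{n}=-\frac{r_n^k}{n}.$$
In particular $\gamma_n(t)=-k/n$ when $n>k$, since then $q_n^k=0$. Pointwise convergence on $[1,+\infty[$ is therefore equivalent to convergence at every integer $k\ge1$. For fixed $k$ and $N\ge k$ we can split
$$f_N^a(k)=\sum_{n=2}^{k}a_n q_n^k\;-\;k\sum_{n=2}^{N}\frac{a_n}{n}.$$
The first sum does not depend on $N$. So pointwise convergence forces $\sum_{n\ge2}a_n/n$ to converge; taking $k=1$ already shows this. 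Write $S$ for this sum.

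\emph{Step 3 (necessity: the triangular system).} The limiting condition becomes
$$\sum_{n=2}^{k}a_nq_n^k-kS=1\quad\text{for all }k\ge1.$$
At $k=1$ the first sum is empty, which gives $S=-1$. For $k\ge2$ the condition then reads
$$\sum_{n=2}^{k}a_nq_n^k=1-k.$$
Since $q_k^k=1$, this system is lower triangular with unit diagonal. By induction on $k$, each $a_k$ is determined uniquely by $a_2,\dots,a_{k-1}$. Hence there is at most one sequence $(a_n)_{n\ge2}$ with the required property.

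\emph{Step 4 (sufficiency: Möbius works).} It remains to check that $a_n=\mu(n)$ satisfies both requirements. For the triangular identities, the first Lemma gives $\sum_{n=1}^k\mu(n)q_n^k=1$, and removing the $n=1$ term (which equals $k$) yields $\sum_{n=2}^k\mu(n)q_n^k=1-k$. For the series condition, the prime number theorem in the form $\sum_{n\ge1}\mu(n)/n=0$ gives both the convergence of $\sum_{n\ge2}\mu(n)/n$ and the value $S=-1$. This is consistent with Corollary \ref{mobius}. Uniqueness then forces $a_n=\mu(n)$ for all $n\ge2$.

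\emph{Main obstacle.} The only non-elementary input is the convergence of $\sum\mu(n)/n$, i.e.\ the prime number theorem (or the quantitative Lemma \ref{bigO}). The real content of the argument is noticing that the tail $n>k$ contributes $-k\sum a_n/n$. This is what turns mere pointwise convergence into the two exact conditions $S=-1$ and the triangular identities.
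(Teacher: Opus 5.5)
Your proof is correct and follows essentially the same route as the paper. Both evaluate at integers via $\gamma_n(k)=q_n^k-k/n$, read off $\sum_{n\ge2}a_n/n=-1$ from $k=1$, and identify $(a_n)_{n\ge2}$ as the unique solution of the unit lower-triangular system in the $q_n^k$ that the M\"obius identity $\sum_{n\le k}\mu(n)q_n^k=1$ solves. The only difference is that you insert the limit value directly, instead of the paper's detour through the constancy of $X_k$, the telescoping, and the case split on $a_2$.

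One minor correction: the prime number theorem is needed only for the converse (that these sequences actually converge to $1$). The necessity direction stated in the proposition is purely elementary.
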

	
	\begin{proof} Let $L(a,1)=\sum\limits_{n=1}^{+\infty}\frac{a_{n}}{n}$. This number exists and is well defined because  $f_{N}^{a}(1)=\sum\limits_{n=1}^{N} a_{n}\gamma_{n}(1)=-\sum\limits_{n=2}^{N}\frac{a_{n}}{n}$ goes to $1$ when $N\rightarrow +\infty$, by assumption. Let $T^{k}(a)= \sum_{n=1}^{k} a_{n}q_{n}^{k}$ and $X_{k}=\sum\limits_{n=1}^{+\infty}a_{n}\frac{r_{n}^{k}}{n}=k L(a,1)-T^{k}(a)$.
		
		Indeed, $X_{k}$ is constante. It is equivalent to say $\forall k\geq 1$, $X_{k}=X_{k+1}$
		\\
		$\Leftrightarrow$ $\forall k\geq 1$, $(k+1)L(a,1)-T^{k+1}(a)=k L(a,1)-T^{k}(a)$ 
		\\
		$\Leftrightarrow$ $\forall k\geq 1$, $L(a,1)=T^{k+1}(a)-T^{k}(a)$
		\\ 
		$\Leftrightarrow$ $\forall k\geq 1$, $kL(a,1)=\sum\limits_{n=1}^{k}T^{n+1}(a)-T^{n}(a)= T^{k+1}(a)-T^{1}(a)$
		\\ 
		$\Leftrightarrow$ $\forall k\geq 1$, $L(a,1)=\frac{1}{k}\sum\limits_{n=1}^{k+1}(a_{n}q_{n}^{k+1}-a_{1}):=F^{k}(a)$ 
		\\
		$\Leftrightarrow$ $\forall k\geq 1$, $F^{k}(a)=F^{1}(a)=2a_{1}+a_{2}-a_{1}$ and $F^{1}(a)=L(a,1)$
		\\
		$\Leftrightarrow$ $\forall k\geq 1$, $\sum\limits_{n=2}^{k+1} a_{n}q_{n}^{k+1}=ka_{2}$ and $F^{1}(a)=L(a,1)$
		
		If $a_{2}=0$, then we get by induction $\forall k\geq 1$, $\sum\limits_{n=2}^{k+1} a_{n}q_{n}^{k+1}=0$ we have $\forall n\geq 2$ $a_{n}=0$.
		
		If $a_{2}\neq 0$, let $b_{n}=a_{n}/(-a_{2})$ for $n \geq 2$, then $\sum\limits_{n=2}^{k+1} b_{n}q_{n}^{k+1}=-k$. Moreover, we have $\sum\limits_{n=2}^{k+1} \mu(n) q_{n}^{k+1}=-k$. Then $\forall k\geq 1$, $\sum\limits_{n=2}^{k+1} (b_{n}-\mu(n))q_{n}^{k+1}=0$, by induction we get $b_{n}=\mu(n)$ $\forall n\geq 2$. Comme $-\sum\limits_{n=2}^{+\infty}\frac{a_{n}}{n}=1$, $a_{2}=-1$. The coefficients $a_{n}$ are of the desired form.
		
		Reciprocally, the sequence $(c,\mu(2),...,\mu(n),...)$ satisfies all the desired conditions \end{proof}.
	
	\medskip
	
	We could take $a_{n}=\mu(n)$ but B\'{a}ez-Duarte has shown that the natural approximation $\sum \mu(n) \gamma_{n}$ diverges in $\mathscr H$. A more detailed account can be found in \cite{BAEZDUARTE19991} and \cite{BD2002}. Therefore, one has to look for the coefficients $a_{n}$ that depend on a parameter, which in turn may depend on $n$.

	\subsection{Exponential correction}
	
	The idea is to pick a sequence in $\overline{B^{nat}}$ that converges to $1$. Let us consider a family of sequences that depend upon a parameter $u \in ]0,+\infty]$, and are defined by:
	
	$$ f(u) : t \mapsto \sum_{n=1}^{+\infty} \mu(n)e^{-nu} \gamma_{n}(t)$$
	
	Let us observe that $\forall u \in ]0,+\infty]$ and $\forall N \geq 1$:
	
	$$\left\| \sum_{n=1}^{N} \mu(n)e^{-nu} \gamma_{n}-\sum_{n=1}^{+\infty} \mu(n)e^{-nu} \gamma_{n} \right\| _{2}^{2}$$
	$$=\int\limits_{1}^{+\infty}\left| \sum_{n=N+1}^{+\infty} \mu(n)e^{-nu} \gamma_{n}(t)\right|^{2} \frac{dt}{t^{2}} \leq \left( \sum_{n=N+1}^{+\infty} e^{-nu}\right)^{2} $$
	
	This quantity goes to $0$ when $N\rightarrow +\infty$. Then $ \forall u \in ]0,+\infty]$ $f(u) \in \overline{B^{nat}}$. All of this boils down to check the continuity of the norm of $f(u)$ in $0$.
	
	For all integers $k\geq 1$ and $\forall u \in ]0,+\infty]$ let $h_{k}(u)=\sum\limits_{n=1}^{+\infty}\mu(n)\frac{r_{n}^{k}}{n}e^{-nu}$.
	
	\begin{lemma}  
		$\forall u \in ]0,+\infty[$, $\left\|f(u)\right\| _{2}^{2}=\sum\limits_{k=1}^{+\infty} \frac{1}{k(k+1)}|h_{k}(u)|^{2}$.
	\end{lemma}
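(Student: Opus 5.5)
The plan is to use the fact that $f(u)$ is a step function, constant on each interval $[k,k+1)$ for integers $k\ge1$. Since $\gamma_n(t)=[t/n]-[t]/n$ depends only on $[t]$, for $t\in[k,k+1)$ we have $[t]=k$ and $[t/n]=[[t]/n]=[k/n]=q_n^k$. This gives
\[
\gamma_n(t)=q_n^k-\frac{k}{n}=-\frac{r_n^k}{n}.
\]
Hence, for $t\in[k,k+1)$,
\[
f(u)(t)=\sum_{n\ge1}\mu(n)e^{-nu}\gamma_n(t)=-\sum_{n\ge1}\mu(n)\frac{r_n^k}{n}e^{-nu}=-h_k(u).
\]
The series converges absolutely for $u>0$, because $|r_n^k/n|<1$ and $\sum e^{-nu}<\infty$. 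So evaluating it pointwise is legitimate, and it agrees with the $\mathscr H$-limit of the partial sums established just above the statement: $L^2$ convergence gives an a.e.\ convergent subsequence, whose limit is the pointwise sum.

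Next I split the integral over $[1,+\infty[$ into the unit intervals $[k,k+1)$:
\[
\|f(u)\|_2^2=\int_1^{\infty}|f(u)(t)|^2\frac{dt}{t^2}=\sum_{k\ge1}|h_k(u)|^2\int_k^{k+1}\frac{dt}{t^2}.
\]
The inner integral is $\int_k^{k+1}t^{-2}\,dt=\frac1k-\frac1{k+1}=\frac1{k(k+1)}$, which is exactly the claimed formula. Interchanging sum and integral is harmless because all terms are nonnegative (monotone convergence). Moreover $|h_k(u)|\le\sum_n e^{-nu}$ uniformly in $k$, so the series is finite.

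The only point needing care is the identity $[t/n]=[[t]/n]$ for $t\ge1$. It holds because $[t/n]\le t/n$ gives $n[t/n]\le t$, and since $n[t/n]$ is an integer this forces $n[t/n]\le[t]$; the reverse inequality $[[t]/n]\le[t/n]$ is immediate from $[t]\le t$. Beyond that, the argument is routine.
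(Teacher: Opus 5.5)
Your proof is correct and follows the same route as the paper. You split $[1,\infty)$ into the intervals $[k,k+1)$, observe that $\gamma_n(t)=-r_n^k/n$ there so that $f(u)(t)=-h_k(u)$, and integrate $t^{-2}$ to obtain $1/(k(k+1))$; you also supply the details the paper's one-line proof leaves implicit: the identity $[t/n]=[[t]/n]$, absolute convergence of the series for $u>0$, and the interchange of sum and integral.
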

	
	\begin{proof} We fix an $u>0$. By definition $\left\|f(u)\right\| _{2}^{2}=\int\limits_{1}^{+\infty}\left| \sum_{n=1}^{+\infty} \mu(n)e^{-nu} \gamma_{n}(t)\right|^{2} \frac{dt}{t^{2}}$ 
		$=\sum\limits_{k=1}^{+\infty}\int\limits_{k}^{k+1}\left| \sum_{n=1}^{+\infty} \mu(n)e^{-nu} \gamma_{n}(t)\right|^{2} \frac{dt}{t^{2}}= \sum\limits_{k=1}^{+\infty} \frac{1}{k(k+1)}|-h_{k}(u)|^{2}$ \end{proof}.
	
	\begin{lemma} 
		$\forall u \in ]0,+\infty[$,  $\langle 1 | f(u) \rangle = - \sum\limits_{k=1}^{+\infty} \frac{1}{k(k+1)}h_{k}(u)$.
	\end{lemma}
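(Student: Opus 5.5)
The plan is to follow the proof of the preceding lemma for $\|f(u)\|_2^2$: cut the integral over $[1,+\infty[$ into the unit intervals $[k,k+1[$, on each of which $f(u)$ is constant, and then integrate the weight $t^{-2}$.

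First I identify $f(u)$ on $[k,k+1[$. For $t\in[k,k+1[$ we have $[t]=k$ and $[t/n]=[k/n]=q_{n}^{k}$, since $[t/n]$ only jumps at multiples of $n$. Hence
$$\gamma_{n}(t)=q_{n}^{k}-\frac{k}{n}=-\frac{r_{n}^{k}}{n},$$
using $k=q_{n}^{k}n+r_{n}^{k}$. It follows that for $t\in[k,k+1[$,
$$f(u)(t)=\sum_{n\ge1}\mu(n)e^{-nu}\gamma_n(t)=-h_k(u).$$
The series converges absolutely because $|\gamma_n|<1$ and $\sum e^{-nu}<\infty$ for $u>0$. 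This is exactly the identification already used in the previous lemma.

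Next I write out the scalar product. Since $\langle 1|f(u)\rangle=\int_1^{+\infty}\overline{f(u)(t)}\,t^{-2}\,dt$ and $f(u)$ is real-valued ($\mu$, $e^{-nu}$ and $\gamma_n$ are all real), the conjugation can be dropped. Splitting the integral gives
$$\langle 1|f(u)\rangle=\sum_{k\ge1}\int_k^{k+1}(-h_k(u))\frac{dt}{t^2}=-\sum_{k\ge1}\left(\frac1k-\frac1{k+1}\right)h_k(u)=-\sum_{k\ge1}\frac{h_k(u)}{k(k+1)}.$$

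The only step that needs care is justifying the exchange of the sum over $k$ with the integral. The pointwise bound $|f(u)(t)|\le\sum_n e^{-nu}=\frac{1}{e^u-1}$ holds for all $t$, so $|h_k(u)|\le \frac{1}{e^u-1}$ uniformly in $k$. Therefore $\sum_k \frac{|h_k(u)|}{k(k+1)}$ converges, and dominated convergence, or simply the countable additivity of the integral of an integrable function, justifies the splitting. Equivalently, $f(u)\in\mathscr H$ and the constant $1\in\mathscr H$, so the integral is absolutely convergent and can be computed interval by interval. I do not expect a genuine obstacle here; the argument is routine bookkeeping parallel to the previous lemma.
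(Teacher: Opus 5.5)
Your proposal is correct and is essentially the paper's argument: the paper's proof is simply ``similarly as before,'' meaning the same unit-interval decomposition used for $\|f(u)\|_2^2$, with $f(u)=-h_k(u)$ on $[k,k+1[$ and $\int_k^{k+1}t^{-2}\,dt=\frac{1}{k(k+1)}$. Your explicit bound $|h_k(u)|\le (e^u-1)^{-1}$, used to justify splitting the integral, supplies a step the paper leaves implicit.
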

	
	\begin{proof} Similarly as before \end{proof}.
	
	\begin{lemma}  
		$\forall u \in ]0,+\infty[$,  $\langle 1 | f(u) \rangle = - \sum\limits_{n=1}^{+\infty} \frac{\mu(n)}{n}\log(n)e^{-nu}$.
	\end{lemma}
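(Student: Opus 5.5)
The plan is to compute $\langle 1 \mid \gamma_n\rangle$ for each fixed $n$ and then sum over $n$ using linearity and continuity of the scalar product. I will not work from the previous lemma's double sum over $k$. The claim for a single $n$ is
$$\langle 1 \mid \gamma_n\rangle=\int_1^{+\infty}\gamma_n(t)\,\frac{dt}{t^2}=-\frac{\log n}{n}.$$
Once this holds, the estimate in the paragraph on the exponential correction gives that $\sum_{n\le N}\mu(n)e^{-nu}\gamma_n\to f(u)$ in $\mathscr H$. Since $1\in\mathscr H$ and the scalar product is continuous, and since $\mu(n)e^{-nu}$ is real, we get $\langle 1\mid f(u)\rangle=\sum_{n}\mu(n)e^{-nu}\langle 1\mid\gamma_n\rangle=-\sum_n\frac{\mu(n)}{n}\log(n)e^{-nu}$. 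The final series converges absolutely because $u>0$.

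To get $\langle 1\mid\gamma_n\rangle$, I will use the formula proved in the section on the Riemann zeta function and the scalar product:
$$\langle t^{1-z}\mid\gamma_n\rangle=\Bigl(\frac1{n^z}-\frac1n\Bigr)\frac{\zeta(z)}{z}.$$
I take real $z>1$ and let $z\downarrow1$. On the left, $|t^{1-z}\gamma_n(t)t^{-2}|\le t^{-2}$ for $z\ge1$, since $|\gamma_n|<1$. Dominated convergence then gives convergence to $\int_1^\infty\gamma_n(t)t^{-2}\,dt$. On the right, $n^{-z}-n^{-1}=-(z-1)\frac{\log n}{n}+O((z-1)^2)$, while $\zeta(z)=\frac1{z-1}+O(1)$ and $1/z\to1$. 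So the right side tends to $-\frac{\log n}{n}$, which proves the claim. For $n=1$ both sides vanish, consistent with $\gamma_1\equiv0$.

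There is a cross-check, and an alternative route if one prefers to avoid $\zeta$. Compute $\int_1^\infty[t/n]t^{-2}dt-\frac1n\int_1^\infty[t]t^{-2}dt$ with truncation at $X$. Then $\int_1^X[t/n]t^{-2}\,dt=\frac1n\int_{1/n}^{X/n}[s]s^{-2}\,ds$, and the two logarithmically divergent pieces differ by $\frac1n\int_{X/n}^{X}[s]s^{-2}ds\to\frac{\log n}{n}$, with the correct sign. The only delicate point in the whole argument is this interchange of limit and integral, or of the two divergent integrals in the alternative route. It is handled by domination by $t^{-2}$ in the first approach, or by the explicit truncation in the second.
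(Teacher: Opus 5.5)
Your proof is correct, but it takes a different route from the paper. The paper never evaluates $\langle 1\mid\gamma_n\rangle$ individually through $\zeta$. It starts from the preceding lemma, $\langle 1\mid f(u)\rangle=-\sum_k h_k(u)/(k(k+1))$, which comes from cutting $[1,\infty)$ into the intervals $[k,k+1)$, on which $\gamma_n=-r_n^k/n$. It then evaluates the inner series $\sum_{k\ge1} r_n^k/(k(k+1))=\log n$ in three steps:
\begin{itemize}
\item it writes $\sum_{p\ge0}\frac{1}{(pn+k)(pn+k+1)}=\int_0^1 t^{k-1}\frac{1-t}{1-t^n}\,dt$;
\item it sums over $k$ using the differentiated geometric series;
\item it reduces the result to $\int_0^1\bigl(\frac1{1-t}-\frac{nt^{n-1}}{1-t^n}\bigr)dt=\log n$.
\end{itemize}
A Fubini argument in $(k,n)$ then finishes the proof.

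You instead read off $\langle 1\mid\gamma_n\rangle=-\frac{\log n}{n}$ from the Mellin identity $\langle t^{1-z}\mid\gamma_n\rangle=(n^{-z}-n^{-1})\zeta(z)/z$ by letting $z\downarrow1$. There the simple zero of $n^{-z}-n^{-1}$ cancels the pole of $\zeta$ and leaves the derivative of $n^{-z}$. Your passage to $f(u)$ uses $L^2$ convergence of the partial sums rather than Fubini.

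Your route is shorter and explains where $\log n$ comes from. Combined with the interval decomposition, it also gives the paper's corollary $\sum_k r_n^k/(k(k+1))=\log n$ at no extra cost. The paper's route is self-contained and elementary, and it does not depend on the $\zeta$ formula or its behaviour at $z=1$. Your truncation check via $t=ns$ is the most elementary of the three arguments and would serve as a complete proof on its own.
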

	
	\begin{proof} Let's compute the following expressions:
		
		\noindent \textbf{1)} We have $\sum_{k=1}^{n-1}t^{k}=\frac{t-t^{n}}{1-t}$ and by differentiation: $\sum_{k=1}^{n-1}kt^{k-1}=\frac{1-nt^{n-1}}{1-t}+\frac{t-t^{n}}{(1-t)^2}$
		
		\noindent \textbf{2)} First: $\sum_{k=1}^{+\infty}\frac{r_{n}^{k}}{k(k+1)}=\sum_{k=1}^{n-1}\frac{k}{k(k+1)}+\sum_{p=1}^{+\infty}\sum_{k=pn+1}^{pn+n}\frac{r_{n}^{k}}{k(k+1)}= \sum_{k=1}^{n-1}\frac{k}{k(k+1)}+\sum_{p=1}^{+\infty}\sum_{k=1}^{n-1}\frac{k}{(pn+k)(pn+k+1)}= \sum_{k=1}^{n-1}\sum_{p=0}^{+\infty}\frac{k}{(pn+k)(pn+k+1)}$
		
		\noindent Then:
		$$\sum_{p=0}^{N}\frac{1}{(pn+k)} =  \sum_{p=0}^{N} \int_{0}^{1} t^{pn+k-1}dt = \int_{0}^{1} t^{k-1}\frac{1-t^{n(N+1)}}{1-t^{n}}dt$$
		
		\noindent Therefore:
		
		$$\sum_{p=0}^{N}\frac{1}{(pn+k)(pn+k+1)}=\sum_{p=0}^{N}\frac{1}{(pn+k)}-\frac{1}{(pn+k+1)} $$ 
		$$= \int_{0}^{1} t^{k-1}\frac{1-t^{n(N+1)}}{1-t^{n}}(1-t)dt$$
		
		\noindent $\forall N \geq 0$ et $\forall t \in ]0,1[ $, $\left|t^{k-1} \frac{1-t^{n(N+1)}}{1-t^{n}}(1-t) \right| \leq t^{k-1}\frac{1-t}{1-t^{n}} $. The bound is integrable on $]0,1[$. It follows from the dominated convergence theorem that:
		
		$$\sum_{p=0}^{+\infty}\frac{1}{(pn+k)(pn+k+1)}=\int_{0}^{1} t^{k-1}\frac{1-t}{1-t^{n}}dt$$
		
		\noindent Using the expressions from 1):
		
		$$\sum_{k=1}^{n-1}\sum_{p=0}^{+\infty}\frac{k}{(pn+k)(pn+k+1)}$$
		$$= \int_{0}^{1} \frac{1-nt^{n-1}}{1-t^{n}} + \frac{t-1+1-t^{n}}{(1-t^{n})(1-t)} dt = \int_{0}^{1} \frac{1-nt^{n-1}}{1-t^{n}} - \frac{1}{(1-t^{n})} + \frac{1}{(1-t)}dt$$
		
		Furthermore, let us observe that for $\varepsilon \in ]0,1[$, we have:
		
		\begin{center}
			$\int_{0}^{\varepsilon} \frac{nt^{n-1}}{1-t^{n}}dt=-\log(1-\varepsilon^{n})$ and $\int_{0}^{\varepsilon} \frac{1}{1-t}dt=-\log(1-\varepsilon)$
		\end{center}
		\noindent we deduce that:
		$$\int_{0}^{1} \frac{-nt^{n-1}}{1-t^{n}}  + \frac{1}{(1-t)}dt$$
		$$ = \lim\limits_{\varepsilon \rightarrow 1}\int_{0}^{\varepsilon} \frac{-nt^{n-1}}{1-t^{n}}  + \frac{1}{(1-t)}dt=\lim\limits_{\varepsilon \rightarrow 1} \log \frac{(1-\varepsilon^{n})}{(1-\varepsilon)}= \lim\limits_{\varepsilon \rightarrow 1} \log(\sum_{k=0}^{n-1}\varepsilon^{k})= \log(n)$$

		\textbf{3)} Grouping together the results above, we get: $\sum_{k=1}^{+\infty}\frac{r_{n}^{k}}{k(k+1)}=\log(n)$. $\forall k\geq 1$, $\forall n\geq 1$, $\forall u \in ]0,+\infty[$, $\left|\frac{\mu(n)r_{n}^{k}e^{-nu}}{nk(k+1)} \right| < \frac{e^{-nu}}{k(k+1)}$ then the double sum satisfies all the conditions fo the Fubini theorem. It follows that we are allowed to exchange the summation order and get the desired result. \end{proof}
	
	\begin{coro}
		For all integers $n\geq 1$, we have $\sum_{k=1}^{+\infty}\frac{r_{n}^{k}}{k(k+1)}=\log(n)$.
	\end{coro}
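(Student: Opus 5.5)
This corollary is essentially step \textbf{2)}--\textbf{3)} of the preceding proof isolated from the M\"obius sum. I would still write a short self-contained argument that avoids the integral manipulations, using a direct telescoping computation.

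Fix $n\geq 1$. The case $n=1$ is trivial, since $r_1^k=0$ for all $k$ and $\log 1=0$. So assume $n\geq 2$. All terms are nonnegative and $r_n^k\le n-1$, so the series converges absolutely and may be rearranged freely. Write $\frac{1}{k(k+1)}=\frac1k-\frac1{k+1}$ and apply Abel summation to the partial sums up to $K=Mn$, a multiple of $n$:
\[
\sum_{k=1}^{Mn}r_n^k\Bigl(\frac1k-\frac1{k+1}\Bigr)=\sum_{k=1}^{Mn}\frac{r_n^k-r_n^{k-1}}{k}-\frac{r_n^{Mn}}{Mn+1},
\]
with $r_n^0=0$. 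Since $r_n^{Mn}=0$, the boundary term vanishes. Moreover $r_n^k-r_n^{k-1}=1$ unless $n\mid k$, in which case it equals $-(n-1)$. Hence the partial sum equals
\[
\sum_{k=1}^{Mn}\frac1k-n\sum_{m=1}^{M}\frac1{mn}=H_{Mn}-H_M .
\]

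Finally, $H_N=\log N+\gamma+o(1)$ gives $H_{Mn}-H_M\to\log n$ as $M\to\infty$. The full series converges, so its limit along the subsequence $K=Mn$ is its value, and the result follows.

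The only delicate point is the boundary term in the Abel summation. It vanishes only because we restrict to $K$ a multiple of $n$, and this restriction is legitimate because the series is already known to converge.
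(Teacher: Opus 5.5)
Your argument is correct, but it takes a different route from the paper.

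The paper gives no separate proof of this corollary. The identity is step \textbf{2)} of the proof of the preceding lemma, and goes as follows:
\begin{itemize}
\item the sum is split into residue classes $k \bmod n$;
\item each class sum $\sum_{p\ge0}\frac{1}{(pn+k)(pn+k+1)}$ is written as $\int_0^1 t^{k-1}\frac{1-t}{1-t^n}\,dt$ via dominated convergence;
\item the sum over $k$ is done by differentiating a finite geometric series;
\item the resulting integral $\int_0^1\bigl(\frac{1}{1-t}-\frac{nt^{n-1}}{1-t^n}\bigr)\,dt$ is evaluated as $\lim_{\varepsilon\to1}\log\frac{1-\varepsilon^n}{1-\varepsilon}=\log n$.
\end{itemize}

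Your summation by parts replaces all of this with the exact identity $\sum_{k\le Mn} r_n^k/(k(k+1)) = H_{Mn}-H_M$.

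What your route buys:
\begin{itemize}
\item It needs nothing beyond $H_N=\log N+\gamma+o(1)$.
\item It gives a rate of convergence for free, namely $\log n+O(1/M)$.
\item It makes the structure behind $\log n$ transparent. Since $r_n^k=\sum_{j\le k}\bigl(1-n\mathbf{1}_{n\mid j}\bigr)$, the series is just the summation-by-parts form of $\sum_k (1-n\mathbf{1}_{n\mid k})/k$.
\end{itemize}

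The paper's integral representation, for its part, fits the generating-function style it uses elsewhere in that section.

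One small correction to your closing remark: the boundary term is not actually delicate. For every $K$ one has $0\le r_n^K/(K+1)\le (n-1)/(K+1)\to0$, so restricting to $K=Mn$ is a convenience, not a necessity.
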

	
	\subsection{A few statements about the continuity}
	
	\begin{prop}
		$\forall k\geq 1$, $h_{k}(u) : u \mapsto \sum_{n=1}^{+\infty} \frac{\mu(n)}{n}r_{n}^{k}e^{-nu}$ converges uniformly on $[0,+\infty[$, therefore it deines a continuous function on $[0,+\infty[$.
	\end{prop}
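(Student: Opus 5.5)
The plan is to split off the finitely many terms with $n\le k$ and handle the tail by an Abel summation that uses only the convergence of $\sum\mu(n)/n$. Fix $k\geq 1$. For $n>k$ the Euclidean division of $k$ by $n$ gives $q_{n}^{k}=0$ and $r_{n}^{k}=k$. Hence for every $u\ge 0$
$$h_{k}(u)=\sum_{n=1}^{k}\frac{\mu(n)}{n}r_{n}^{k}e^{-nu}+k\sum_{n>k}\frac{\mu(n)}{n}e^{-nu}.$$
The first sum is a finite sum of continuous functions on $[0,+\infty[$, so it does not affect anything. It therefore suffices to prove that $\sum_{n\ge1}\frac{\mu(n)}{n}e^{-nu}$ converges uniformly on $[0,+\infty[$.

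For this I will use the prime number theorem in the form recalled above: $\sum\mu(n)/n$ converges, with sum $0$. Lemma \ref{bigO} gives a quantitative version, but only convergence is needed. Put $\alpha(x)=\sum_{n\le x}\mu(n)/n$ and
$$\varepsilon_{M}=\sup_{m\ge M}|\alpha(m)-\alpha(M-1)|,$$
so that $\varepsilon_{M}\to0$ as $M\to+\infty$ by the Cauchy criterion. For $M\le N$, Abel summation gives
$$\sum_{n=M}^{N}\frac{\mu(n)}{n}e^{-nu}=\bigl(\alpha(N)-\alpha(M-1)\bigr)e^{-Nu}+\sum_{n=M}^{N-1}\bigl(\alpha(n)-\alpha(M-1)\bigr)\bigl(e^{-nu}-e^{-(n+1)u}\bigr).$$
Since $u\ge0$, the weights $e^{-nu}$ are nonincreasing in $n$ and lie in $]0,1]$. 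The differences $e^{-nu}-e^{-(n+1)u}$ are therefore nonnegative, and they telescope to at most $e^{-Mu}\le1$. Consequently
$$\Bigl|\sum_{n=M}^{N}\frac{\mu(n)}{n}e^{-nu}\Bigr|\le 2\varepsilon_{M}$$
uniformly in $u\in[0,+\infty[$. This is the uniform Cauchy criterion, so the series converges uniformly, and so does $h_{k}$. In fact this is exactly Abel's test for uniform convergence.

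Each term $u\mapsto\frac{\mu(n)}{n}r_{n}^{k}e^{-nu}$ is continuous on $[0,+\infty[$, so the uniform limit $h_{k}$ is continuous there. At $u=0$ its value is $h_{k}(0)=\sum_{n}\frac{\mu(n)}{n}r_{n}^{k}=-1$ by Corollary \ref{mobius}.

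The only genuinely non-elementary ingredient is the convergence of $\sum\mu(n)/n$. The crude bound $\sum e^{-nu}/n$ is useless near $u=0$, so the argument cannot go through absolute convergence. The point to get right is that the Abel estimate is uniform down to $u=0$ included, which holds because the telescoped weight is bounded by $1$ independently of $u$.
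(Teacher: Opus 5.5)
Your proof is correct and takes the same route as the paper: Abel summation (Abel's test) on the tail, using that the weights $e^{-nu}$ are nonincreasing and bounded by $1$ uniformly for $u\ge 0$, together with the convergence of $\sum\mu(n)/n$ from the prime number theorem. Your preliminary observation that $r_{n}^{k}=k$ for $n>k$ makes explicit why $\sum_{n}\mu(n)r_{n}^{k}/n$ converges, which the paper only asserts.
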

	
	\begin{proof} Let $\forall k\geq 1$ $a_{n}(k)=\frac{\mu(n)}{n}r_{n}^{k}$, $b_{n}(u)=e^{-nu}$ and $A_{n,N}(k)=\sum\limits_{j=1}^{n}a_{j+N}(k)$. Let us fiw an integer $k$. Using the Abel transform we get:
		
		$$\sum\limits_{n=N+1}^{N+p} a_{n}(k) b_{n}(u) $$
		$$= \sum\limits_{n=1}^{p} a_{n+N}(k) b_{n+N}(u)=A_{p,N}(k)b_{p+N}(u)+\sum\limits_{n=1}^{p-1} A_{n,N}(k)(b_{n+N}(u)-b_{n+N+1}(u))$$
		
		The series $\sum a_{n}(k)$ converges according to the prime number theorem, therefore the sequence of the partial sums is a Cauchy sequence:
		
		\begin{center}
			$\forall \varepsilon$, $\exists N_{0} \in \mathbb{N}$ such that $\forall N \geq N_{0}$, $\forall p \geq 1$ $ \Longrightarrow$ $ \left| \sum_{n=N+1}^{N+p} a_{n}(k) \right|  < \varepsilon$
		\end{center}
		
		\noindent Again by Abel summation:
		\begin{center}
			$\forall \varepsilon$, $\exists N_{0} \in \mathbb{N}$ such that $\forall N \geq N_{0}$, $\forall p \geq 2$,$\forall u \in [0,+\infty[$  $ \Longrightarrow$ 
			
			$ \left| \sum_{n=N+1}^{N+p} \frac{\mu(n)}{n}r_{n}^{k}e^{-nu} \right| \leq \left\lbrace b_{p+N}(u)+\sum\limits_{n=1}^{p-1}(b_{n+N}(u)-b_{n+N+1}(u)) \right\rbrace \varepsilon = b_{N}(u)\varepsilon \leq \varepsilon$
		\end{center}
		
		The series $h_{k}$ satisfies the Cauchy criterion on $[0,+\infty[$, therefore it converges uniformly on $[0,+\infty[$. The continuity follows. \end{proof}
	
	\medskip
	
	Observe that $\forall k\geq 1$, $h_{k}(0)=-1$.
	
	\medskip
	
	\begin{prop}
		The function, $ u \langle 1 | f(u) \rangle = - \sum\limits_{n=1}^{+\infty} \frac{\mu(n)}{n}\log(n)e^{-nu}$ converges uniformly on $[0,+\infty[$, therefore is continuous on $[0,+\infty[$.
	\end{prop}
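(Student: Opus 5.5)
The plan is to mirror the Abel-summation argument used above for $h_k$. We take
\[
a_n=\frac{\mu(n)}{n}\log n,\qquad b_n(u)=e^{-nu}.
\]
The only input needed is that the series $\sum_{n\ge1}\frac{\mu(n)\log n}{n}$ converges. Once we have this, the Cauchy criterion for $\sum a_n$ transfers uniformly in $u\in[0,+\infty[$ to $\sum a_n b_n(u)$. The reason is that the sequence $b_n(u)$ is nonnegative, nonincreasing in $n$ and bounded by $1$ for every $u\ge0$. Uniform convergence of a series of continuous functions then gives continuity on $[0,+\infty[$.

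\textbf{Step 1: convergence of $\sum \mu(n)\log(n)/n$.} Put $\beta(x)=\sum_{n\le x}\frac{\mu(n)}{n}\log n$. By partial summation against $\alpha(x)=\sum_{n\le x}\mu(n)/n$,
\[
\beta(x)=\alpha(x)\log x-\int_1^x \frac{\alpha(t)}{t}\,dt .
\]
Lemma \ref{bigO} gives $\alpha(x)\log x=O\bigl(e^{-c\sqrt{\log x}}\log x\bigr)\to0$. It also gives
\[
\int_1^\infty \frac{|\alpha(t)|}{t}\,dt \ll \int_0^\infty e^{-c\sqrt{s}}\,ds<\infty
\]
after the substitution $s=\log t$. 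Hence $\beta(x)$ converges, to $-\int_1^\infty \alpha(t)t^{-1}\,dt$ (which is in fact $-1$, though the value is not needed). This step is the only one using more than the plain prime number theorem. It is the main obstacle: $\sum\mu(n)/n=0$ alone does not suffice, and the saving $e^{-c\sqrt{\log x}}$ from Lemma \ref{bigO} is what absorbs the extra factor $\log x$.

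\textbf{Step 2: uniform Cauchy criterion.} Fix $\varepsilon>0$. Choose $N_0$ such that for all $N\ge N_0$ and $p\ge1$ the tails satisfy $|A_{p,N}|<\varepsilon$, where $A_{p,N}=\sum_{j=1}^{p}a_{j+N}$. Abel's transformation gives
\[
\sum_{n=N+1}^{N+p}a_nb_n(u)=A_{p,N}\,b_{p+N}(u)+\sum_{n=1}^{p-1}A_{n,N}\bigl(b_{n+N}(u)-b_{n+N+1}(u)\bigr).
\]
All the differences $b_{n+N}(u)-b_{n+N+1}(u)$ are nonnegative, so the sum telescopes in absolute value. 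This gives the bound $\varepsilon\, b_{N+1}(u)\le\varepsilon$, uniformly in $u\ge0$.

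\textbf{Step 3: conclusion.} Each term $u\mapsto a_ne^{-nu}$ is continuous on $[0,+\infty[$, so the uniform limit is continuous there. For $u>0$ this limit equals $\langle 1|f(u)\rangle$ by the preceding lemma. This yields the stated continuity on $[0,+\infty[$, including at $u=0$, which is what is needed later to compute $\lim_{u\to0}\langle1|f(u)\rangle$.
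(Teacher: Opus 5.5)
Your proof is correct and follows the paper's route. You establish convergence of $\sum_n \mu(n)\log(n)/n$ from Lemma~\ref{bigO} by partial summation (in Stieltjes form, where the paper sums $\alpha(n)\log(1+1/n)$ discretely), then run the same Abel-transformation argument as for $h_k$ to get uniform convergence on $[0,+\infty[$; your bound $\varepsilon\, b_{N+1}(u)$ is the right one. The paper additionally identifies the value at $u=0$ via $-\zeta'(x)/\zeta(x)^2\to1$, which the statement does not require but which is used immediately afterwards for $\lim_{u\to0}\langle 1|f(u)\rangle=1$.
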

	
	\begin{proof} 
		
		\textbf{1)} let $x>1$.
		
		$$\zeta(x)=\sum_{n=1}^{+\infty} \frac{1}{n^{x}}=\int_{1}^{+\infty}\frac{dt}{t^{x}}+\sum_{n=1}^{+\infty} \int_{n}^{n+1}(n^{-x}-t^{-x})dt$$
		$$=\frac{-1}{x-1}+\sum_{n=1}^{+\infty} \int_{n}^{n+1}(n^{-x}-t^{-x})dt$$
		
		As we have $0<(n^{-x}-t^{-x})=\int_{n}^{t}xu^{-x-1}du < \frac{x}{n^{2}}$ for $n<x<n+1$, we get $0<\int(n^{-x}-t^{-x})dt < \frac{x}{n^{2}}$. Then:
		
		$$\zeta(x) \sim_{x \rightarrow 1} \frac{1}{1-x}$$
		
		\noindent Similarly:
		
		$$-\zeta'(x)=\sum_{n=1}^{+\infty} \frac{\log(n)}{n^{x}}=\int_{1}^{+\infty}\log(t)t^{-x}dt+\sum_{n=1}^{+\infty} \int_{n}^{n+1}(\log(n)n^{-x}-\log(t)t^{-x})dt$$
		
		The change in variable $u=\log(t)$ allows to compute easily: 
		
		$$\int_{1}^{+\infty}\log(t)t^{-x}dt=\frac{1}{(x-1)^2}.$$

		As $0<\log(n)n^{-x}-\log(t)t^{-x}=\int_{n}^{t}u^{-x-1}(x\log(u)-1)du < \frac{x}{n^{2}}(\log(n)+1)$, is valid for $3\leq n < x < n+1$. It follows:
		
		$$\zeta'(x) \sim_{x \rightarrow 1} \frac{-1}{(x-1)^2}$$
		
		We also have $\forall x>1$:
		$$-\sum_{n=1}^{+\infty}\frac{\mu(n)}{n^{x}}\log(n)= \frac{d}{dx}\sum_{n=1}^{+\infty}\frac{\mu(n)}{n^{x}}= \frac{d}{dx} \frac{1}{\zeta(x)}=-\frac{\zeta'(x)}{\zeta(x)^{2}} \sim_{x \rightarrow 1} 1$$

		\textbf{2)} Let $N$ and $P$ two strictly positive integers. Summing by parts:
		
		$\sum_{n=N+1}^{N+P}\frac{\mu(n)}{n}\log(n)=-\sum_{n=N+1}^{N+P-1} \alpha(n)\log(1+\frac{1}{n})+  \alpha(N)\log(N)+f(N+P)\log(N+P)$. 
		
		As $\left|\alpha(n)\log(1+\frac{1}{n}) \right| = O \left( \frac{e^{-c\sqrt{\log n}}}{n} \right)$ according to Lemma \ref{bigO} and
		
		$\int_{1}^{+\infty}\frac{e^{-c\sqrt{\log t}}}{t} dt =\int_{0}^{+\infty}e^{-c\sqrt{y}}dy < +\infty$. 
		
		Then the series $\sum_{n \geq 1} \alpha(n)\log(1+\frac{1}{n})$ is absolutely convergent because the series $\sum_{n \geq 1} \frac{e^{-c\sqrt{\log n}}}{n}$ converges by comparison of series and integral. In particular $\sum_{n \geq 1} \alpha(n)\log(1+\frac{1}{n})$ satisfies the Cauchy criterion:
		
		Let $\varepsilon > 0$,
		\begin{center}
			$\exists N_{0} \in \mathbb{N}$ such that $\forall N \geq N_{0}$, $\forall P \geq 2$ $ \Longrightarrow$ $ \left| \sum_{n=N+1}^{N+P-1} \alpha(n)\log(1+\frac{1}{n}) \right|  < \varepsilon$
		\end{center}
		
		According to Lemma \ref{bigO}, $\alpha(N)\log(N) \rightarrow 0$ when $N\rightarrow +\infty$. Then $\exists N_{1} \in \mathbb{N}$ such that $\forall N \geq N_{1}$ $ \Longrightarrow$ $ \left| \alpha(N)\log(N) \right|  < \varepsilon$
		
		Therefore $\forall N \geq \sup(N_{0},N_{1})$, $\forall P \geq 2$ $ \Longrightarrow$ $ \left| \sum_{n=N+1}^{N+P}\frac{\mu(n)}{n}\log(n) \right|  < 3\varepsilon$
		
		Then according to the Cauchy criterion the series $\sum_{n \geq 1}\frac{\mu(n)}{n}\log(n)$ converges.
		
		\textbf{3)} Similarly as in previous proposition we could use the Abel transform to show the uniform convergence of the function $x \mapsto -\sum_{n=1}^{+\infty}\frac{\mu(n)}{n^{x}}\log(n)$ on $[1,+\infty[$, beacuse $\sum_{n \geq 1}\frac{\mu(n)}{n}\log(n)$ converges. 
		
		And, by continuity $-\sum_{n=1}^{+\infty}\frac{\mu(n)}{n^{x}}\log(n)=1$. Then same method allows us to show the uniform convergence of $u \mapsto \langle 1|f(u) \rangle$ on $[0,+\infty[$, in particular we have $\lim\limits_{u\rightarrow 0} \langle 1|f(u) \rangle = 1$. \end{proof}
	
	\medskip
	\noindent If $\lim\limits_{u\rightarrow 0} \left\| f(u) \right\| _{2}^{2} = 1$, then:
	
	$$\left\| f(2^{-n}) -1\right\| _{2}^{2}=\left\| f(2^{-n}) \right\| _{2}^{2}-2\langle 1|f(2^{-n}) \rangle+1$$
	
	\noindent goes to $0$ when $n\rightarrow +\infty$. in this case $(f(2^{-n}))_{n \in \mathbb{N}} \in \overline{B^{nat}}^{\mathbb{N}}$ is a sequence that converges to $1$ in $\mathscr H$, as $\overline{B^{nat}}$ is obviously closed, $1 \in \overline{B^{nat}}$. The difficulty resides in showing that  
	$\lim\limits_{u\rightarrow 0} \left\| f(u) \right\| _{2}^{2} = 1$. 
	
	\subsection{Power series}
	
	\medskip
	When exchanging the summation order in $\left\| f(u) \right\| _{2}^{2}$, we get the following
	\medskip
	
	\begin{prop} $\forall x \in ]0,1[$ :
		
		$$F(x):=\left\| f(-\log(x)) \right\| _{2}^{2}$$
		
		$$ = \left( \sum\limits_{n=2}^{+\infty}\mu(n)x^{n} \right)\times\left( \sum\limits_{n=2}^{+\infty}\frac{\mu(n)}{n} \log(n) x^{n}\right)$$
		
		$$-\left( \sum\limits_{n=2}^{+\infty}\frac{\mu(n)}{n} x^{n}\right)\times\left( \sum\limits_{n=2}^{+\infty}\frac{\mu(n)}{n} \log(n) x^{n}\right) $$
		
		$$-\pi \left( \sum\limits_{n=2}^{+\infty}\frac{\mu(n)}{n} x^{n}\right)\times \left( \sum\limits_{n=2}^{+\infty}\frac{\mu(n)}{n}\left( \sum\limits_{k=1}^{n-1}\left(\frac{1}{2}-\frac{k}{n} \right) \cot(\frac{\pi k}{n}) \right) x^{n}\right) $$
		
		$$+\pi \sum\limits_{n=2}^{+\infty}\sum\limits_{m=2}^{+\infty}\frac{\mu(n)}{n}\frac{\mu(m)}{m} A(n,m)x^{n+m},$$
		
		\noindent where
		
		$$A(n,m)=(n,m)\sum\limits_{k=1}^{\frac{n}{(n,m)}-1} \left(\frac{1}{2}-\{\frac{km}{n}\} \right) \cot(\frac{\pi k (n,m)}{n})$$
	\end{prop}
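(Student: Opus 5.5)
Starting from the Lemma giving $\|f(u)\|_2^2=\sum_{k\ge1}\frac{1}{k(k+1)}|h_k(u)|^2$ with $u=-\log x$, we expand $h_k$ and interchange the sums:
\[
F(x)=\sum_{n,m\ge1}\frac{\mu(n)\mu(m)}{nm}\,x^{n+m}\,S(n,m),\qquad S(n,m)=\sum_{k\ge1}\frac{r_n^k r_m^k}{k(k+1)}.
\]
The interchange is justified by absolute convergence, since $|r_n^k r_m^k/(nm)|\le 1$ and $\sum_{n,m} x^{n+m}\sum_k \frac{1}{k(k+1)}<\infty$. The terms with $n=1$ or $m=1$ vanish because $r_1^k=0$, which is why every sum in the statement starts at $2$. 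The whole proof therefore reduces to a closed formula for the arithmetic quantity $S(n,m)$.

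To evaluate $S(n,m)$, we write $r_n^k=n\{k/n\}$ and use the sawtooth $\{y\}=\tfrac12-\psi(y)$, where $\psi(y)=\tfrac12-\{y\}$ for non-integers and $\psi(y)=\tfrac12$ at integers (some care is needed with this convention). This gives
\[
\frac{r_n^kr_m^k}{nm}=\tfrac14-\tfrac12\psi(k/n)-\tfrac12\psi(k/m)+\psi(k/n)\psi(k/m),
\]
up to the integer-point adjustments. Summing against $\frac{1}{k(k+1)}$ should produce the four displayed pieces. The constant and single-$\psi$ terms are controlled by the Corollary $\sum_k r_n^k/(k(k+1))=\log n$. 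After multiplying by $\mu(n)\mu(m)x^{n+m}/(nm)$ and summing over the free variable, they yield the products $\bigl(\sum\mu(n)x^n\bigr)\bigl(\sum\frac{\mu(n)}{n}\log n\,x^n\bigr)$ and $-\bigl(\sum\frac{\mu(n)}{n}x^n\bigr)\bigl(\sum\frac{\mu(n)}{n}\log n\,x^n\bigr)$. Here the factor $\sum\mu(n)x^n$ comes from $r_n^k/n\cdot n$, i.e.\ from the mixed term in which one factor keeps its $n$. Concretely, we will regroup $S(n,m)$ as $n\log m+m\log n$ minus a genuinely bilinear remainder, and match the coefficients accordingly.

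For the bilinear remainder, the plan is to use the Dirichlet-series method, following Vasyunin. We write $\sum_k \frac{g(k)}{k(k+1)}=\int_1^\infty g(\lfloor t\rfloor)\,t^{-2}\,dt$ and express periodic functions of $k$ through their finite Fourier expansions in the additive characters $e^{2\pi i jk/n}$. The Fourier coefficients of $\{k/n\}$ are $\frac{i}{2}\cot(\pi j/n)$-type quantities up to an additive $\frac{1}{2n}$. The sum $\sum_k e^{2\pi i jk/n}/(k(k+1))$ is then evaluated via $\sum_k \frac{z^k}{k(k+1)}=1+\frac{1-z}{z}\log(1-z)$, and the real parts of the logarithms produce the $\log n$ terms. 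With a single periodic factor, the imaginary parts $\arg(1-e^{2\pi ij/n})$, which are linear in $j/n$, combine with the cotangent coefficients to give $\sum_{k=1}^{n-1}\bigl(\tfrac12-\tfrac kn\bigr)\cot(\pi k/n)$. This is the third term of the statement, with its factor $-\pi$. With two periodic factors, the product of characters collapses along the lattice of common periods: reducing modulo $\operatorname{lcm}(n,m)$ and using $e^{2\pi ijk/n}e^{2\pi ilk/m}$ leaves the sum $(n,m)\sum_{k}\bigl(\tfrac12-\{km/n\}\bigr)\cot\bigl(\pi k(n,m)/n\bigr)$, which is $A(n,m)$.

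The main obstacle is this last step: tracking the gcd reduction and the boundary and integer-point conventions exactly, so that the constants $\pi$ and the $\tfrac12$ shifts come out correctly and the symmetric term reproduces $A(n,m)$. As a check, I would first verify the formula for $S(n,m)$ with $n,m$ coprime and small, e.g.\ $n=2$, $m=3$, by direct numerical summation. I would then apply the known reciprocity/Dedekind-type identities for $\sum(\tfrac12-\{km/n\})\cot(\pi k/n)$ to handle the general case by scaling $n=dn'$, $m=dm'$ with $d=(n,m)$.
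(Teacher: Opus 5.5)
Your reduction $F(x)=\sum_{n,m\ge2}\frac{\mu(n)\mu(m)}{nm}x^{n+m}S(n,m)$ is correct, and it is the paper's own starting point. Since $\gamma_n(t)=-r_n^{\lfloor t\rfloor}/n$, one has $S(n,m)=nm\langle\gamma_n|\gamma_m\rangle$. The paper then simply quotes Vasyunin's closed form
\[
S(n,m)=\tfrac{m-1}{2}\log n+\tfrac{n-1}{2}\log m-\tfrac{\pi}{2}\bigl(V(n)+V(m)\bigr)+\tfrac{\pi}{2}\bigl(A(n,m)+A(m,n)\bigr),
\]
with $V(n)=\sum_{k=1}^{n-1}(\frac12-\frac kn)\cot\frac{\pi k}{n}$, and reads off the four products by symmetry in $(n,m)$. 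You instead try to derive this formula, and that is where the gap lies. The only step with real content is left as ``the main obstacle'', and the bookkeeping you do describe is wrong:
\begin{itemize}
\item With your convention, $\{y\}=\frac12-\psi(y)$ holds exactly. The constant and single-$\psi$ pieces then contribute exactly $-\frac{nm}{4}+\frac12(m\log n+n\log m)$ to $S(n,m)$, not $n\log m+m\log n$. This yields only the first product $\bigl(\sum_{n\ge2}\mu(n)x^n\bigr)\bigl(\sum_{n\ge2}\frac{\mu(n)\log n}{n}x^n\bigr)$, plus a spurious $-\frac14\bigl(\sum_{n\ge2}\mu(n)x^n\bigr)^2$.
\item A single periodic factor never produces a cotangent sum. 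The Fourier coefficient $\frac{1}{n(w-1)}$, $w=e^{-2\pi ij/n}$, of $\{k/n\}$ cancels exactly the factor $w-1=z^{-1}-1$ in $\sum_k\frac{z^k}{k(k+1)}=1+(z^{-1}-1)\log(1-z)$. After the constant modes cancel, what remains is $\frac1n\sum_{j=1}^{n-1}\log(1-e^{2\pi ij/n})=\frac{\log n}{n}$.
\item Consequently the second product, the $V$-product and the $A$-double sum must all come from $nm\sum_k\psi(k/n)\psi(k/m)/(k(k+1))$. That sum has to equal $\frac{nm}{4}-\frac12\log(nm)-\frac\pi2(V(n)+V(m))+\frac\pi2(A(n,m)+A(m,n))$, and you have not computed it.
\item Your gcd plan also points the wrong way. $S(dn',dm')$ is not related to $S(n',m')$ by scaling, and no reciprocity law is needed. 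The gcd enters through $\sum_{s=0}^{d-1}\cot(\theta+\frac{\pi s}{d})=d\cot(d\theta)$, which turns $\sum_{j=1}^{n-1}(\frac12-\{\frac{jm}{n}\})\cot\frac{\pi j}{n}$ into $A(n,m)$.
\end{itemize}

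To close the gap, either invoke Vasyunin's formula for $nm\langle\gamma_n|\gamma_m\rangle$ as the paper does, or finish the Fourier computation. It goes through if you expand $\{k/n\}\{k/m\}$ without centering. Take $a_0=\frac{n-1}{2n}$, $a_j=\frac{1}{n(w_1-1)}$, $b_0=\frac{m-1}{2m}$, $b_l=\frac{1}{m(w_2-1)}$, with $w_1=e^{-2\pi ij/n}$ and $w_2=e^{-2\pi il/m}$.
\begin{itemize}
\item The terms with $j=0$ or $l=0$ give, by the Corollary $\sum_k r_n^k/(k(k+1))=\log n$, exactly $\frac{m-1}{2}\log n+\frac{n-1}{2}\log m-nm\,a_0b_0$.
\item For $j,l\ne0$, the constant $1$ in $\sum_k z^k/(k(k+1))$ contributes $\sum_{j,l\ne0}a_jb_l=a_0b_0$, which cancels the last term above.
\item The logarithmic part has coefficient $\frac{w_1w_2-1}{nm(w_1-1)(w_2-1)}=\frac{i}{2nm}\bigl(\cot\frac{\pi j}{n}+\cot\frac{\pi l}{m}\bigr)$. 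This factor vanishes when $\frac jn+\frac lm\in\mathbb Z$, where no logarithm is present, so those pairs may be restored to the full sums.
\item The $\log|2\sin\pi\alpha|$ parts are purely imaginary and cancel under $(j,l)\mapsto(n-j,m-l)$.
\item Finally, $\arg(1-e^{2\pi i\alpha})=\pi(\{\alpha\}-\frac12)$ together with Hermite's identity $\sum_{l=0}^{m-1}\{x+\frac lm\}=\{mx\}+\frac{m-1}{2}$ yields $\frac\pi2(A(n,m)-V(n))+\frac\pi2(A(m,n)-V(m))$. The $V$-terms are exactly the removed $l=0$ (resp.\ $j=0$) summands.
\end{itemize}
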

	
	\begin{proof} $\forall m \geq 2$, $\forall n \geq 2$ let $\omega=(n,m)$, $m=\omega m_{0}$ et $n=\omega n_{0}$. According to the expressions proven in \cite{Vasyunin1996}, we get the following formulas for the scalar products
		
		$$nm\langle  \gamma_{n} | \gamma_{m}  \rangle $$
		$$= \frac{m-1}{2}\log(n) + \frac{n-1}{2} \log(m)- \frac{\pi}{2}\sum\limits_{k=1}^{n-1} \left(\frac{1}{2}-\frac{k}{n} \right) \cot(\frac{\pi k}{n})$$
		
		$$ - \frac{\pi}{2}\sum\limits_{k=1}^{m-1} \left(\frac{1}{2}-\frac{k}{m} \right) \cot(\frac{\pi k}{m})$$
		
		$$+\frac{\pi \omega}{2}\sum\limits_{k=1}^{n_{0}-1} \left(\frac{1}{2}-\{\frac{km_{0}}{n_{0}}\} \right) \cot(\frac{\pi k}{n_{0}})+\frac{\pi \omega}{2}\sum\limits_{k=1}^{m_{0}-1} \left(\frac{1}{2}-\{\frac{kn_{0}}{m_{0}}\} \right) \cot(\frac{\pi k}{m_{0}})$$
		
		Observe that 
		$A(n,m)=\sum\limits_{k=1}^{n_{0}-1} \left(\frac{1}{2}-\{\frac{km_{0}}{n_{0}}\} \right) \cot(\frac{\pi k}{n_{0}})$ 
		
		\noindent and 
		
		$A'(n,m)=\sum\limits_{k=1}^{m_{0}-1} \left(\frac{1}{2}-\{\frac{kn_{0}}{m_{0}}\} \right) \cot(\frac{\pi k}{m_{0}})=A(m,n)$. 
		
		The formula for $F(x)$ can be easily inferred from the expansion of the scalar product: 
		
		$$F(x) = \left\langle \sum_{n=2}^{+\infty} \mu(n)x^{n} \gamma_{n} | \sum_{m=2}^{+\infty} \mu(m) x^{m} \gamma_{m} \right\rangle $$
		
	\end{proof}

	\medskip
	Furthermore, Vasyunin has shown the following asymptotic formula:
	
	$$\frac{\pi}{n}\sum\limits_{k=1}^{n-1}\left(\frac{1}{2}-\frac{k}{n} \right) \cot(\frac{\pi k}{n})=\log(n)+\gamma -\log(2\pi)+O(1/n)$$
	
	Put
	\[
	\begin{aligned}
		M(x)&:=\sum_{n\geq1}\mu(n)x^n,
		&m(x)&:=\sum_{n\geq1}\frac{\mu(n)}n x^n,\\
		L(x)&:=\sum_{n\geq1}\mu(n)\log n\,x^n,
		&l(x)&:=\sum_{n\geq1}\frac{\mu(n)\log n}{n}x^n.
	\end{aligned}
	\]
	The notation $M$, $m$, $L$, and $l$ will not be used in other parts of this paper. It is introduced locally for convenience. Replacing the cotangent sums in $F(x)$ by asymptotic formulas, we get
	the following function:
	
	\[( M(x)-x ) l(x) - (m(x)-x)l(x)\]
	\[ - (m(x)-x)\sum_{n \geq 2}\mu(n)x^n( \log n +\gamma -\log(2\pi)) \]
	
	\[+ \sum_{n \geq 2}\sum_{m \geq 2}\mu(n)x^n \frac{\mu(m)}{m} x^m ( \log n - \log m  +\gamma -\log(2\pi) )\]
	
	\[= ( M(x)-x ) l(x) - (m(x)-x)l(x) - (m(x)-x) (L(x) + \gamma -\log(2\pi)(M(x)-1))\]
	
	\[+ (m(x)-x)(L(x) + \gamma -\log(2\pi)(M(x)-x)) - (M(x)-x)l(x)\]
	
	\[ = (x-m(x))l(x)\]
	
	We see that the divergent terms cancel out. Heuristically $F = (id - m)l + err$, where $err$ is continuous at $x=1$. Then $F$ should have a limit when $x\to 1$, because $(id - m)l$ is continuous. The main goal of this paper is to give some evidence that $F$ could be bounded.
	
	\begin{lemma}
		Assume that $L=\lim\limits_{x\to 1}F(x)$ exists, then $L\geq 1$.
	\end{lemma}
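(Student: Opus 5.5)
The argument is Cauchy--Schwarz in $\mathscr H$ together with the limit of $\langle 1 | f(u)\rangle$ proved above. Since $F(x)=\|f(-\log x)\|_2^2$, putting $u=-\log x$ turns $x\to 1^-$ into $u\to 0^+$. The hypothesis therefore becomes $\|f(u)\|_2^2\to L$ as $u\to0^+$.

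First I compute the norm of the constant function: $\|1\|_2^2=\int_1^{+\infty}t^{-2}\,dt=1$. Then for every $u>0$, Cauchy--Schwarz gives
\[
|\langle 1 | f(u)\rangle|\le \|1\|_2\,\|f(u)\|_2=\sqrt{F(e^{-u})}.
\]

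Next I use the earlier proposition: $u\mapsto\langle 1|f(u)\rangle=-\sum_{n\ge1}\frac{\mu(n)}{n}\log(n)e^{-nu}$ extends continuously to $[0,+\infty[$ with $\lim_{u\to0}\langle 1|f(u)\rangle=1$. This rests on $-\zeta'(x)/\zeta(x)^2\to1$ as $x\to1$ and on the Abel summation argument. Letting $u\to0^+$ in the inequality, the left side tends to $1$ and the right side tends to $\sqrt L$. Hence $1\le\sqrt L$, i.e. $L\ge1$.

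No step is genuinely hard; the main obstacle is already handled by the earlier proposition giving $\langle 1|f(u)\rangle\to1$. I only need the identification $F(e^{-u})=\|f(u)\|_2^2$, the definition of $F$, and existence of the limit of $F$, which is the hypothesis. An equivalent presentation avoids Cauchy--Schwarz. Expand $0\le\|f(u)-1\|_2^2=F(e^{-u})-2\langle 1|f(u)\rangle+1$, which is legitimate because $\langle 1|f(u)\rangle$ is real: $f(u)$ is real-valued. Passing to the limit gives $0\le L-2+1$, so again $L\ge1$. This version also shows that $L=1$ holds exactly when $f(u)\to1$ in $\mathscr H$.
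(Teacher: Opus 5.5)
Your proof is correct, but it takes a different route from the paper's.

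\textbf{The paper's route.} The paper works with the decomposition $F(x)=\sum_{k\ge1}|h_k(x)|^2/(k(k+1))$. Each $h_k$ is continuous on $[0,+\infty[$ with $h_k(0)=-1$, so the first $N$ terms tend to $\sum_{k\le N}1/(k(k+1))=1-\frac{1}{N+1}$. The discarded tail is nonnegative, which forces $L\ge 1-\frac{1}{N+1}$ for every $N$. This is a Fatou-type argument. It needs no uniformity in $k$, and its only arithmetic input is $\sum_n\mu(n)/n=0$, via Corollary~\ref{mobius} and Abel's theorem.

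\textbf{Your route.} You test $f(u)$ against the unit vector $1$ and invoke $\langle 1|f(u)\rangle\to1$. Since $\langle 1|f(u)\rangle=-\sum_k h_k(u)/(k(k+1))$, your Cauchy--Schwarz step is exactly Cauchy--Schwarz for this same weighted $\ell^2$ sum. Passing to the limit in it, however, requires controlling the tail in $k$ uniformly in $u$. That is why it rests on the deeper input $\sum_n\mu(n)\log n/n=-1$, obtained from the error term of Lemma~\ref{bigO}, or alternatively on the bound $\sum_k\|h_k\|/(k(k+1))<\infty$ from the Korobov--Vinogradov section. Both facts are established earlier in the paper, so your argument is complete relative to it.

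\textbf{What each buys.} Your version gives conceptual clarity. The identity $\|f(u)-1\|_2^2=F(e^{-u})-2\langle 1|f(u)\rangle+1$ shows that $L-1$ is exactly the limiting squared distance from $f(u)$ to $1$, which ties the lemma directly to the criterion. The paper's version is arithmetically cheaper and identifies $L-1$ differently, as the mass $\lim_N\lim_{x\to1}\sum_{k>N}|h_k(x)|^2/(k(k+1))$ escaping to large $k$. Both arguments in fact give $\liminf_{x\to1}F(x)\ge1$ without assuming that the limit exists.
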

	
	\begin{proof}
		Let $h_k(x)=h_k(u)$ and set
		\[
		F_N(x):=\sum_{k=1}^{N}\frac{|h_k(x)|^2}{k(k+1)}.
		\]
		Since $F(x)=\sum_{k\geq1}|h_k(x)|^2/(k(k+1))$, we have
		\[
		\lim_{x\to1}\sum_{k>N}\frac{|h_k(x)|^2}{k(k+1)}
		=L-\sum_{k=1}^{N}\frac1{k(k+1)}
		=L-1+\frac1{N+1}.
		\]
		Therefore
		\[
		L-1=\lim_{N\to\infty}\lim_{x\to1}
		\sum_{k>N}\frac{|h_k(x)|^2}{k(k+1)}\geq0.
		\]
	\end{proof}
	
	\subsection{Mobius function and polynomials}
	
	For $x\in[0,1]$, put
	\[
	R_k(x):=\sum_{n\mid k}\mu(n)x^n,
	\qquad
	Q_k(x):=\frac1k\sum_{n=1}^{k}\mu(n)
	\left\lfloor\frac kn\right\rfloor x^n,
	\]
	and
	\[
	g(x):=\sum_{n=1}^{\infty}\frac{\mu(n)}n x^n,
	\qquad h_k(x)=h_k(u):=k\bigl(g(x)-Q_k(x)\bigr).
	\]
	The prime number theorem implies that $g$ is continuous on $[0,1]$.
	The polynomials $Q_k$ have some useful properties.
	
	\begin{prop}
		Set $R_1(x) = Q_1(x)=x$. We have $(k+1)Q_{k+1} - kQ_k = R_{k+1}$ and $Q_k = \frac{1}{k}\sum_{i=1}^{k} R_i$. Moreover, $Q_k$ converges uniformly on every compact of $[0,1[$ to the function $g$.
	\end{prop}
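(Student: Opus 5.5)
The identity $(k+1)Q_{k+1}-kQ_k=R_{k+1}$ comes from comparing floors at $k$ and $k+1$. By definition
\[
(k+1)Q_{k+1}(x)-kQ_k(x)=\sum_{n=1}^{k+1}\mu(n)\Bigl(\Bigl\lfloor\tfrac{k+1}{n}\Bigr\rfloor-\Bigl\lfloor\tfrac{k}{n}\Bigr\rfloor\Bigr)x^n .
\]
Here the term $n=k+1$ has $\lfloor k/(k+1)\rfloor=0$, so it fits the same pattern. The elementary fact $\lfloor (k+1)/n\rfloor-\lfloor k/n\rfloor=1$ if $n\mid k+1$ and $0$ otherwise turns the right side into $\sum_{n\mid k+1}\mu(n)x^n=R_{k+1}(x)$.

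Telescoping gives the averaging formula. The identity above says the sequence $kQ_k$ increases by $R_{k+1}$ at each step, starting from the convention $1\cdot Q_1=x=R_1$. Summing the increments from $1$ to $k-1$ yields $kQ_k=\sum_{i=1}^kR_i$, i.e.\ $Q_k=\frac1k\sum_{i\le k}R_i$. Strictly, this is also consistent with the definition, since $Q_1=\mu(1)\lfloor1\rfloor x=x$.

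For uniform convergence on a compact $[0,a]$ with $a<1$, I would write $Q_k-g$ directly rather than use Cesàro. We have
\[
Q_k(x)=\sum_{n\le k}\mu(n)\frac{\lfloor k/n\rfloor}{k}x^n,\qquad g(x)-Q_k(x)=\sum_{n\le k}\mu(n)\Bigl(\frac1n-\frac{\lfloor k/n\rfloor}{k}\Bigr)x^n+\sum_{n>k}\frac{\mu(n)}n x^n .
\]
Since $0\le \frac1n-\frac{\lfloor k/n\rfloor}{k}=\frac{\{k/n\}}{k}<\frac1k$, the first sum is bounded on $[0,a]$ by $\frac1k\sum_{n\ge1}a^n=\frac{a}{k(1-a)}$. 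The tail is bounded by $\sum_{n>k}a^n\le a^{k+1}/(1-a)$. Both bounds tend to $0$ uniformly in $x\in[0,a]$, which proves uniform convergence on every compact subset of $[0,1[$.

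There is no real obstacle here. The only point needing care is the divisor identity for the floor difference: if $n\mid k+1$ then $k=(k+1)-1$ has quotient $(k+1)/n-1$, while if $n\nmid k+1$ then $k$ and $k+1$ have the same quotient. This argument does not use the prime number theorem; it would only be needed to extend the convergence up to $x=1$, which is not claimed.
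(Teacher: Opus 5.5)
Your proof is correct and follows the paper's route: the same floor-difference observation gives the recurrence, telescoping (the paper says induction) gives the averaging formula, and the convergence comes from a termwise comparison with $g$ on $[0,a]$. The only difference is that the paper dominates each term by $c^n$ and interchanges the limit with the sum, whereas you give the explicit bound $\frac{a}{k(1-a)}+\frac{a^{k+1}}{1-a}$, which makes the uniformity and an $O_a(1/k)$ rate explicit.
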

	
	\begin{proof}
		The relation $(k+1)Q_{k+1} - kQ_k = R_{k+1}$ follows from the observation if $n|k+1$, then $\left[ \frac{k+1}{n}\right] =  \left[ \frac{k}{n}\right] +1$, otherwise $\left[ \frac{k+1}{n}\right] =  \left[ \frac{k}{n}\right]$. Moreover, by induction we get that $Q_k$ is the average of the $R_i$. 
		
		Let $0<c<1$. For the convergence it is enough to observe that $|\mu(n)[k/n](x^n)/k|<c^n$, for all $x \in [0,c]$, and 
		$$\lim\limits_{k \to + \infty} Q_k(x) = \sum_{n=1}^{+\infty} \lim\limits_{k \to + \infty} \mu(n)[k/n](x^n)/k = g(x).$$
	\end{proof}
	
	The function  $g$ and its derivative were already studied in the literature, for example \cite{FRO}. According to the paper above, the function $g$ is concave on  $[0 1-\delta]$, and then it starts to oscillate infinitely often, where  $\delta = 10^{-18}$. So the  $Qk$'s can not be concave on the whole $[0,1]$, in general. Furthermore, one can easily check that $Q_{10}(0.9) \approx 1.665 \times 10^{-3}$. An interesting question would be to investigate how the polynomials $Q_k$ change the convexity on $[0,1]$. Let $p$ be a prime number then $R_p(x) = x - x^p$ is concave on $[0,1]$, however for $R_{p_1...p_r}$ this is no longer the case, where $r\geq 2$ and $p_i$ are distinct primes.
	
	\begin{lemma}
		We have
		\[ R_{p_1...p_r}(x) = \sum_{S \subseteq \left\lbrace p_2,...,p_r \right\rbrace } (-1)^{|S|} R_{p_1}(x^{\prod_{p \in S} p}), \]
		\noindent where $r\geq 2$ and $p_i$ are distinct primes.
	\end{lemma}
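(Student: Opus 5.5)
The plan is a direct computation from the definition $R_k(x)=\sum_{n\mid k}\mu(n)x^n$. Write $k=p_1\cdots p_r$ with distinct primes and put $P=\{p_2,\dots,p_r\}$. Since $k$ is squarefree, each divisor $n$ of $k$ is uniquely $n=d$ or $n=p_1d$ with $d\mid p_2\cdots p_r$. Each such $d$ is uniquely $d=\prod_{p\in S}p$ for a subset $S\subseteq P$. Moreover $\mu(d)=(-1)^{|S|}$, and since $p_1\nmid d$ we have $\mu(p_1d)=-\mu(d)$.

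Splitting the divisor sum according to whether $p_1\mid n$ gives
\[
R_k(x)=\sum_{S\subseteq P}(-1)^{|S|}\Bigl(x^{\prod_{p\in S}p}-x^{p_1\prod_{p\in S}p}\Bigr).
\]
Next recall that $R_{p_1}(y)=\mu(1)y+\mu(p_1)y^{p_1}=y-y^{p_1}$. Substituting $y=x^{\prod_{p\in S}p}$ turns the bracket into exactly $R_{p_1}\bigl(x^{\prod_{p\in S}p}\bigr)$, which is the claimed identity.

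There is no genuine obstacle. The only points to state carefully are the bijection between divisors of $k$ and pairs $(S,\varepsilon)$ with $\varepsilon\in\{0,1\}$, where $n=p_1^{\varepsilon}\prod_{p\in S}p$, and the multiplicativity of $\mu$ on coprime arguments, which gives $\mu(p_1d)=-\mu(d)$. The empty set $S$ contributes $R_{p_1}(x)$, with the empty product equal to $1$.

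Alternatively, one can argue by induction on $r$. The identity $R_{mq}(x)=R_m(x)-R_m(x^q)$ holds for a prime $q\nmid m$ by the same splitting of divisors, and iterating it over $q=p_2,\dots,p_r$ expands into the subset sum.
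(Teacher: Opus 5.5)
Your proof is correct and complete. Every divisor of the squarefree $p_1\cdots p_r$ is uniquely $d$ or $p_1d$ with $d=\prod_{p\in S}p$ and $S\subseteq\{p_2,\dots,p_r\}$. Since $\mu(p_1d)=-\mu(d)=-(-1)^{|S|}$, the divisor sum collapses to $\sum_S(-1)^{|S|}\bigl(x^{d}-x^{p_1d}\bigr)=\sum_S(-1)^{|S|}R_{p_1}(x^{d})$.

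The paper states this lemma without proof, so there is no argument to compare yours against. Your direct computation, or the equivalent induction via $R_{mq}(x)=R_m(x)-R_m(x^q)$ for a prime $q\nmid m$, supplies the routine verification the paper leaves to the reader.
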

	
	It was quite unexpected to notice that the averages of the polynomials $R_i$ converge to $g$. How good is this approximation? The goal of this section is to give an unconditional bound for $\|g-Q_k\|= \sup_{x \in [0,1]} |g(x)-Q_k(x)|$, and to give a second proof of a result proven above.
	
	Let $M(y)  = \sum_{n\leq y} \mu(n)$. The classical
	Korobov--Vinogradov zero-free region
	\cite{Vinogradov1958},\cite{Korobov1958} yields constants $C>0$ and $c_0>0$
	such that, for all sufficiently large $y$,
	
	$$|M(y)| \leq C y \exp(-c_0 \frac{(\log y)^{3/5}}{(\log \log y)^{1/5}}).$$
	
	For convenience, write
	
	\[
	\Phi(y):=\frac{(\log y)^{3/5}}{(\log\log y)^{1/5}},
	\qquad E_c(y):=\exp\bigl(-c\Phi(y)\bigr).
	\]
	
	\begin{lemma}\label{sup}
		Let $S_y(x) : = \sum_{n \leq y} \mu(n) x^n$. There exists a constant $c_1>0$ such that:	
		$$ \sup_{x\in [0,1]} \left| S_y(x) \right| \ll y E_{c_1}(y)$$
	\end{lemma}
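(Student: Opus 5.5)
Summation by parts reduces the statement to the Korobov--Vinogradov bound for $M(y)$. Fix $x\in[0,1]$ and write $\lfloor y\rfloor=N$. Abel summation with $a_n=\mu(n)$, $b_n=x^n$ gives
\[
S_y(x)=M(N)x^{N}+\sum_{n=1}^{N-1}M(n)\bigl(x^{n}-x^{n+1}\bigr)
=M(N)x^{N}+(1-x)\sum_{n=1}^{N-1}M(n)x^{n}.
\]
Because $0\le x^{N}\le1$, the boundary term is at most $|M(N)|\ll yE_{c_0}(y)$; the constant $C$ absorbs small $y$. The work is therefore in the weighted sum $(1-x)\sum_{n<N}M(n)x^n$, uniformly in $x$.

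I will split the sum at $n=\sqrt y$, or more generally at $n=y^{\theta}$. For $n\le \sqrt y$ I use the trivial bound $|M(n)|\le n$. Since $(1-x)\sum_{n}x^{n}\le 1$, this part is at most $\sqrt y$, and $\sqrt y \ll yE_{c}(y)$ for any $c>0$ because $\Phi(y)=o(\log y)$. For $\sqrt y<n<N$ I use the monotonicity of $t\mapsto tE_{c_0}(t)$ for large $t$ (its logarithm is $\log t-c_0\Phi(t)$, which is increasing). Then $|M(n)|\le C\,nE_{c_0}(n)\le C\,yE_{c_0}(\sqrt y)$ for large $y$, and again $(1-x)\sum x^n\le1$. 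Since $\Phi(\sqrt y)\ge 2^{-3/5}\Phi(y)$ up to a factor tending to $1$ (the $(\log\log)^{1/5}$ changes negligibly), we get $E_{c_0}(\sqrt y)\le E_{c_1}(y)$ with, say, $c_1=c_0/2$. Putting the pieces together gives $\sup_x|S_y(x)|\ll yE_{c_1}(y)$.

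The main obstacle is making the monotonicity and the comparison $\Phi(\sqrt y)\gg\Phi(y)$ precise. In particular, one has to check that $tE_{c_0}(t)$ is eventually increasing, which is a derivative computation on $\log t - c_0\Phi(t)$, and to handle the finitely many small $n$ where the Korobov--Vinogradov bound is not yet valid. Both are routine. The finitely many exceptional $n$ contribute $O(1)$, and the loss of a constant in the exponent is exactly why the statement has $c_1$ rather than $c_0$.
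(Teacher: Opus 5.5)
Your proposal is correct and is essentially the paper's proof: Abel summation, a split at $\sqrt y$ with the trivial bound $|M(n)|\le n$ on the lower range, the Korobov--Vinogradov bound on the upper range, and $\Phi(\sqrt y)\ge 2^{-3/5}\Phi(y)$ to pass from $c_0$ to $c_1$. The paper instead uses that $E_{c_0}$ is decreasing. Your monotonicity of $tE_{c_0}(t)$ in fact gives $|M(n)|\ll yE_{c_0}(y)$ directly for $\sqrt y<n<y$, so the loss from $c_0$ to a smaller $c_1$ is not actually forced.
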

	
	\begin{proof}
		The Abel summation gives
		
		$$ S_y(x) = M(y)x^y + (1-x)\sum_{n<y} M(n) x^n$$
		
		The first term satisfies
		
		$$ | M(y)x^y | \ll y E_{c_0}(y)$$
		
		Let
		\[
		\Sigma_1:=(1-x)\sum_{n\leq\sqrt y}M(n)x^n,
		\qquad
		\Sigma_2:=(1-x)\sum_{\sqrt y<n<y}M(n)x^n.
		\]
		Using $|M(n)|\leq n$, we get
		
		\[ \Sigma_1 \leq (1-x) \sum_{n\leq \sqrt (y)} n x^n \leq \sqrt(y) \]
		
		For $\sqrt(y) < n < y$, we have:
		
		\[ M(n)\ll n E_{c_0}(n) < n E_{c_0}(\sqrt(y)) \]
		
		Then, 
		
		\[ \Sigma_2 \ll E_{c_0}(\sqrt(y)) (1-x)\sum_{n<y}n x^n < y E_{c_0}(\sqrt(y)) (1-x)\sum_{n<y} x^n \leq y E_{c_0}(\sqrt(y)) \]
		
		Now $\Phi(\sqrt(y)) \sim 2^{-3/5} \Phi(y)$, so there is a constant $c_1>0$ such that $E_{c_0}(\sqrt(y)) \ll E_{c_1}(y)$, and also $\sqrt(y) \ll yE_{c_1}(y)$. Therefore
		
		\[ |S_y(x)| \ll y E_{c_1}(y) \]
		
		\noindent uniformly in x.
	\end{proof}
	
	For $0<t\leq 1$, define $A_x(t):=\sum_{n\leq 1/t} \mu(n) x^n$.
	
	\begin{lemma}
		For every $x\in[0,1]$,
		\[
		\int_0^1A_x(t)\,dt=g(x),
		\qquad
		\sup_{x\in[0,1]}|A_x(t)|\ll\frac1tE_{c_1}(1/t).
		\]
	\end{lemma}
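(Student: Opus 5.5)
The first identity is a layer-cake exchange of sum and integral. For $0<t\le 1$ we have $n\le 1/t$ if and only if $t\le 1/n$. Hence, for fixed $x\in[0,1]$,
\[
\int_0^1 A_x(t)\,dt=\int_0^1\sum_{n\ge1}\mu(n)x^n\,\mathbf 1_{\{t\le 1/n\}}\,dt .
\]
I will not interchange blindly, because at $x=1$ the series $\sum\mu(n)/n$ is only conditionally convergent. Instead I truncate: for $\eps\in(0,1]$, on $[\eps,1]$ the sum has only the finitely many indices $n\le 1/\eps$. Swapping the finite sum with the integral gives
\[
\int_\eps^1 A_x(t)\,dt=\sum_{n\le 1/\eps}\mu(n)x^n\Bigl(\frac1n-\eps\Bigr)
=\sum_{n\le 1/\eps}\frac{\mu(n)}n x^n-\eps\,S_{1/\eps}(x),
\]
with $S_y$ as in Lemma \ref{sup}.

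Now let $\eps\to0$. The first sum tends to $g(x)$: for $x<1$ this is trivial, and for $x=1$ it is the convergence of $\sum\mu(n)/n$ given by the prime number theorem. For the second term, Lemma \ref{sup} gives $\eps\,|S_{1/\eps}(x)|\ll E_{c_1}(1/\eps)\to0$, uniformly in $x$. Finally, $A_x$ is integrable on $(0,1]$, which follows from the second claim proved below: $\frac1tE_{c_1}(1/t)$ is integrable near $0$. The substitution $y=1/t$ turns its integral into $\int^\infty E_{c_1}(y)\,y^{-1}\,dy$, and with $s=\log y$ this converges since $\exp(-c_1 s^{3/5}(\log s)^{-1/5})$ is integrable in $s$. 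Hence $\int_\eps^1A_x\to\int_0^1A_x$, and therefore $\int_0^1A_x(t)\,dt=g(x)$.

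The bound is just Lemma \ref{sup} with $y=1/t$, since $A_x(t)=S_{1/t}(x)$. This gives $\sup_x|A_x(t)|\ll \frac1tE_{c_1}(1/t)$ for $t$ small, i.e. $1/t$ large enough for the Korobov--Vinogradov bound to apply. For $t$ bounded away from $0$, the trivial bound $|A_x(t)|\le 1/t$ suffices, since $E_{c_1}(1/t)$ is bounded below on that range. One caveat is that $\Phi$ is only defined for $y>e$; the constant in $\ll$ absorbs this.

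The only delicate point is the endpoint $x=1$, where absolute convergence fails. The truncation at $\eps$ combined with Lemma \ref{sup} handles it, so I expect no real obstacle beyond verifying the integrability of $E_{c_1}(y)/y$.
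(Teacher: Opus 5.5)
Your proof is correct. For the bound it is the paper's argument: both put $y=1/t$ in Lemma~\ref{sup}, using $A_x(t)=S_{1/t}(x)$.

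For the identity you take a more careful route. The paper interchanges $\sum_n$ and $\int_0^1$ in one line without justification. By Tonelli--Fubini this is legitimate for $x<1$, where $\sum_n\mu(n)^2x^n/n<\infty$, but not at $x=1$, where the absolute double sum is $\sum_n\mu(n)^2/n=\infty$. Your argument closes exactly this gap at $x=1$. It has four steps:
\begin{itemize}
\item truncate to $[\varepsilon,1]$;
\item use the exact formula $\int_\varepsilon^1A_x=\sum_{n\le1/\varepsilon}\mu(n)x^n/n-\varepsilon S_{1/\varepsilon}(x)$;
\item bound the boundary term by $\varepsilon|S_{1/\varepsilon}(x)|\ll E_{c_1}(1/\varepsilon)$;
\item pass to the limit by dominated convergence with the majorant $t^{-1}E_{c_1}(1/t)$.
\end{itemize}
The price is that the identity now rests on the quantitative M\"obius bound. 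Only something like $M(y)=o(y)$ together with $\int_1^\infty|M(y)|y^{-2}\,dy<\infty$ is really needed, and for $x<1$ nothing at all is needed. The paper's one-line interchange is shorter but is justified only on $[0,1)$.

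One side remark of yours is wrong: $E_{c_1}(1/t)$ is not bounded below for $t$ away from $0$.
\begin{itemize}
\item As $y\downarrow e$ one has $\log\log y\downarrow 0$, so $\Phi(y)\to+\infty$. Hence $E_{c_1}(1/t)\to0$ as $t\uparrow 1/e$.
\item For $1/3<t<1/e$ one has $A_x(t)=x-x^2$, whose supremum over $[0,1]$ is $1/4$.
\end{itemize}
So the inequality genuinely fails near $t=1/e$, and no implied constant can absorb this. The estimate must be read for $0<t\le t_0$ only, with $t_0$ small. That is all the paper uses later. It is also all your integrability step needs, since $|A_x(t)|\le 1/t\le 1/t_0$ on $[t_0,1]$ already gives an integrable bound there.
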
 
	
	\begin{proof}
		Let $1_{[1,1/t]}$ be the characteristic function of the interval $[1,1/t]$, then
		
		\begin{align*}
			\int_0^1A_x(t)\,dt
			&=\int_0^1\sum_{n\geq1}\mu(n)x^n
			1_{[1,1/t]}(n)\,dt\\
			&=\sum_{n\geq1}\mu(n)x^n
			\int_0^1 1_{[1,1/t]}(n)\,dt\\
			&=\sum_{n\geq1}\mu(n)x^n\int_0^{1/n}dt=g(x).
		\end{align*}
		
		The other statement follows from the previous lemma.
	\end{proof}
	
	\begin{lemma}
		We have $Q_k(x) = \frac{1}{k} \sum_{m=1}^{k} A_x\left( \frac{m}{k} \right)$. So $Q_k$ is the Riemann sum approximating to the integral $\int_{0}^{1} A_x(t)dt$.
	\end{lemma}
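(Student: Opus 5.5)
We need to show $Q_k(x)=\frac1k\sum_{m=1}^k A_x(m/k)$. The plan is to expand each $A_x(m/k)$ by its definition, interchange the two finite sums, and recognise the resulting count as the floor $\lfloor k/n\rfloor$ appearing in $Q_k$.

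By definition, $A_x(m/k)=\sum_{n\le k/m}\mu(n)x^n$, so
\[
\frac1k\sum_{m=1}^{k}A_x\!\left(\frac mk\right)=\frac1k\sum_{m=1}^{k}\sum_{n\le k/m}\mu(n)x^n .
\]
Both sums are finite, since $n\le k/m\le k$. The pairs $(m,n)$ that occur are exactly those with $1\le m\le k$, $1\le n$ and $mn\le k$. We therefore swap the order of summation. For fixed $n$ with $1\le n\le k$, the admissible $m$ are those with $1\le m\le k/n$, and there are $\lfloor k/n\rfloor$ of them; the bound $m\le k$ is then automatic. This gives
\[
\frac1k\sum_{m=1}^{k}A_x\!\left(\frac mk\right)=\frac1k\sum_{n=1}^{k}\mu(n)\left\lfloor\frac kn\right\rfloor x^n=Q_k(x),
\]
which is the definition of $Q_k$.

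The Riemann-sum interpretation is immediate from this. The points $m/k$ for $m=1,\dots,k$ are the right endpoints of a uniform partition of $[0,1]$ into intervals of length $1/k$. Hence $\frac1k\sum_{m=1}^k A_x(m/k)$ is the right-endpoint Riemann sum for $\int_0^1 A_x(t)\,dt$, and by the previous lemma this integral equals $g(x)$.

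We can also note that $A_x$ is a step function, constant on each interval $(1/(j+1),1/j]$. The right-endpoint sum therefore matches the integral exactly on every partition cell containing no jump. This remark is not needed for the identity itself, but it will be the starting point for bounding $g-Q_k$ later, using the bound $\sup_x|A_x(t)|\ll t^{-1}E_{c_1}(1/t)$ from the previous lemma.

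There is no real obstacle in this argument. The only point needing care is the interchange of summation, in particular checking that the constraint $m\le k$ is implied by $mn\le k$ with $n\ge1$, so that the inner count is exactly $\lfloor k/n\rfloor$.
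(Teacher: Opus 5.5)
Your proof is correct and matches the paper's: it expands $A_x(m/k)$ as a sum over $n\le k/m$ (the paper writes this with the indicator $1_{[1,k/m]}(n)$), interchanges the two finite sums, and counts the admissible $m$ to obtain $\lfloor k/n\rfloor$. Your reading of the identity as the right-endpoint Riemann sum for $\int_0^1 A_x(t)\,dt$ is also the interpretation the paper intends.
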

	
	\begin{proof}
		\begin{align*}
			\frac1k\sum_{m=1}^k A_x\left(\frac mk\right)
			&=\frac1k\sum_{n\geq1}\mu(n)x^n
			\sum_{m=1}^k1_{[1,k/m]}(n)\\
			&=\frac1k\sum_{n\geq1}\mu(n)x^n
			\left|\{m:m\leq k, m\leq k/n\}\right|\\
			&=\frac1k\sum_{n\geq1}\mu(n)x^n
			\left\lfloor\frac kn\right\rfloor=Q_k(x).
		\end{align*}
		
	\end{proof}
	
	\begin{lemma}\label {Eint}
		For every $c>0$, there exists $c'\in]0, c[$ such that
		
		\[ \int_{N}^{+\infty}\frac{E_{c}(u)}{u} du \ll E_{c'}(N) \]
	\end{lemma}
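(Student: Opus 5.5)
The plan is to substitute $u=e^{v}$ to turn the logarithmic measure into a plain Lebesgue measure, and then compare $\Phi$ with a power of $v$ whose derivative we can integrate exactly. Since $du/u=dv$ and $\Phi(e^{v})=v^{3/5}(\log v)^{-1/5}=:\psi(v)$, the integral becomes
\[
\int_{\log N}^{+\infty}\exp\bigl(-c\,\psi(v)\bigr)\,dv .
\]

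\textbf{Step 1: sublinear growth of $\psi$.} The key observation is that $\psi$ grows sublinearly, and we need a lower bound for it on the tail. For $v\ge V:=\log N$ (with $N$ large), write $v=V+s$ with $s\ge0$. Two facts are needed: $\psi$ is increasing for large $v$, and $\psi'(v)\asymp v^{-2/5}(\log v)^{-1/5}$.

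\textbf{Step 2: splitting the decay.} We split the exponential as
\[
c\,\psi(v)=c'\psi(v)+(c-c')\psi(v), \qquad c'=c/2 .
\]
Since $\psi(v)\ge\psi(V)$ on the range of integration, the first part gives the factor $E_{c'}(N)$, which we pull out. It remains to show that $\int_V^\infty \exp(-(c/2)\psi(v))\,dv$ is bounded.

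\textbf{Step 3: the remaining integral.} This integral is in fact finite uniformly in $V$. Indeed $\psi(v)\ge v^{1/2}$ for large $v$, because $v^{1/10}\ge(\log v)^{1/5}$ eventually. Hence
\[
\int_0^\infty e^{-(c/2)\sqrt v}\,dv<\infty ,
\]
exactly as in the convergence argument used in the proof of the earlier proposition on $\langle 1\mid f(u)\rangle$. So the whole tail is $\ll E_{c/2}(N)$, with an implied constant that depends only on $c$.

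\textbf{Small $N$.} For small $N$ (below the threshold where $\psi$ becomes increasing and $\log\log$ is positive), the bound follows by adjusting the implied constant. This assumes the integral is interpreted for $N$ large enough that $\Phi$ is defined, as is implicit throughout this section.

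The main obstacle is only checking the monotonicity of $\psi$ and the inequality $\psi(v)\ge\sqrt v$ for large $v$; both are elementary calculus. Any $c'\in\,]0,c[$ works by the same splitting, with $c-c'$ in place of $c/2$ in Step 3.
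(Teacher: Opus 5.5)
Your proof is correct, but it takes a different route from the paper. After the same substitution $v=\log u$, the paper integrates by parts, writing $e^{-c\psi}=-\frac{1}{c\psi'}\frac{d}{dv}e^{-c\psi}$. This produces the boundary term $e^{-c\psi(\log N)}/(c\,\psi'(\log N))$ plus an integral weighted by $\psi''/(\psi')^2$. The result is a bound $(\log N)^{2/5}(\log\log N)^{1/5}E_c(N)$ that keeps the full constant $c$ in the exponent; only at the end is the polylogarithmic factor traded for a smaller $c'$.

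You instead give up part of the exponent at once. Monotonicity of $\psi$ on $[\log N,\infty)$ extracts $E_{c'}(N)$. The inequality $\psi(v)\ge\sqrt v$ then bounds what is left by $\int_0^\infty e^{-(c-c')\sqrt v}\,dv=2/(c-c')^2$; this inequality in fact holds for every $v>1$, since it is equivalent to $\sqrt v\ge\log v$. You obtain the explicit estimate $\int_N^\infty E_c(u)u^{-1}\,du\le 2(c-c')^{-2}E_{c'}(N)$ for every $0<c'<c$, with no second derivatives and no absorption step.

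The paper's method says more: the loss relative to $E_c(N)$ is only a power of $\log N$. But its absorption of the $\psi''/(\psi')^2$ integral really needs that ratio to be small compared with $c$. This holds because the ratio tends to $0$, not merely because it is bounded, as the paper states. Your Step~1 asymptotic for $\psi'$ is never used; only the sign of $\psi'$ matters.

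Your closing paragraph on small $N$ should be replaced. Since $\psi'(v)>0$ for $v>e^{1/3}$ and $\log 5>e^{1/3}$, your argument already covers every $N\ge5$. That is the range where the paper uses the lemma (Lemma~\ref{Esum} and the bound for $\|Q_k-g\|$).

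On the other hand, no choice of implied constant extends the inequality to $N$ near $e$. As $N\downarrow e$ one has $\Phi(N)\to+\infty$, so $E_{c'}(N)\to0$, while the left-hand side tends to the positive number $\int_e^\infty E_c(u)u^{-1}\,du$. The lemma must therefore be read for $N\ge N_0$, for instance $N\ge5$.
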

	
	\begin{proof}
		Set $v=\log u$, and let $\psi(v) = \frac{v^{3/5}}{(\log v)^{1/5}}$ then the integral becomes
		
		\[ \int_{\log N}^{+\infty} \exp( -c \psi(v)) dv  = \int_{\log N}^{+\infty} -\frac{1}{c} \frac{1}{\psi'(v)}\frac{d}{dv}\exp( -c \psi(v)) dv \]
		
		Integration by parts gives us
		\[ \int_{\log N}^{+\infty} e^{ -c \psi(v)} dv = \frac{e^{-c\psi(\log N)}}{\psi'(\log N)} + \frac{1}{c} \int_{\log N}^{+\infty} e^{ -c \psi(v)} \frac{\psi''(v)}{(\psi'(v))^2} dv\]
		
		A direct calculation gives
		\[
		\frac{\psi''(v)}{(\psi'(v))^2}
		=v^{-3/5}(\log v)^{1/5}
		\frac{-6-(\log v)^{-1}+6(\log v)^{-2}}
		{(3-(\log v)^{-1})^2}.
		\]
		In particular, this function is bounded.  The integration-by-parts
		formula therefore gives
		
		\[ \int_{\log N}^{+\infty} e^{ -c \psi(v)} dv \ll  \frac{e^{-c\psi(\log N)}}{\psi'(\log N)} \ll (\log N)^{2/5}(\log \log N)^{1/5} e^{-c\psi(\log N)} \ll E_{c'}(N) \]	
	\end{proof}
	
	\begin{lemma}\label{Esum}
		For every $c>0$, there exists $c'\in]0,c[$ such that, uniformly for integers $k\geq N \geq 5$,
		\[ \sum_{m\leq k/N}\frac{1}{m} E_{c}\left( \frac{k}{m}\right) \ll_{c} E_{c'}(N) \]
		The implicit constant is independent of $k$ and $N$.
	\end{lemma}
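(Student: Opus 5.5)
The plan is to compare the sum with the integral of Lemma \ref{Eint}, using that $E_c$ is decreasing for large arguments. Put $y_m:=k/m$. On the range $m\le k/N$ we have $y_m\ge N\ge5$, so $\log\log y_m>0$, $\Phi(y_m)$ is well defined, and $\Phi$ is eventually increasing; hence $E_c$ is eventually decreasing. Enlarging the implicit constant handles any bounded initial range where monotonicity fails. The sum covers $y$ only from $N$ up to $k$, so it is bounded by the full tail $\int_N^\infty$, and the bound will not depend on $k$.

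The key step is a dyadic (or unit-interval) comparison. For $m\ge1$ and $t\in[m,m+1]$ we have $1/m\le 2/t$ and $k/t\le k/m$. Since $E_c$ is decreasing, this gives $E_c(k/m)\le E_c(k/t)$ whenever $k/t$ is also large, which is the wrong direction. So the comparison should instead go from the left, taking $t\in[m-1,m]$ for $m\ge2$, where $k/t\ge k/m$ and therefore $E_c(k/m)\ge E_c(k/t)$. That is again the wrong direction, because we want an upper bound on $E_c(k/m)$.

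The correct neighbor is therefore $t\in[m,m+1]$, with $k/t\le k/m$. This requires comparing $E_c(k/m)$ with $E_c(k/t)$ where $k/t\in[k/(m+1),k/m]$, a ratio at most $2$. Since $\Phi(y)-\Phi(y/2)\to0$ as $y\to\infty$, we get $E_c(y)\asymp E_c(y/2)$ uniformly for $y\ge5$. Hence
\[
\frac1m E_c\!\left(\frac km\right)\ll \int_m^{m+1}\frac{1}{t}E_c\!\left(\frac kt\right)dt .
\]
The last term, $m=\lfloor k/N\rfloor$, needs care, since $k/(m+1)$ may drop below $N$ (but stays $\ge N/2$). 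That single term is at most $E_c(N)\le E_{c'}(N)$, so I would treat it separately.

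Summing over $1\le m<\lfloor k/N\rfloor$ gives a bound of $\int_1^{k/N}t^{-1}E_c(k/t)\,dt$. The substitution $u=k/t$ turns this into $\int_N^{k}u^{-1}E_c(u)\,du\le\int_N^\infty u^{-1}E_c(u)\,du$, and Lemma \ref{Eint} bounds this by $E_{c'}(N)$ for some $c'\in]0,c[$. Adding the separately treated endpoint term gives the claim, with a constant depending only on $c$.

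The main obstacle is the uniform comparability $E_c(y)\asymp E_c(y')$ for $y'\in[y/2,y]$, $y\ge 5/2$. I would prove it by noting that $\Phi'(y)=O(1/y)$ for $y\ge e^e$, so $|\Phi(y)-\Phi(y')|=O(1)$. On the compact range $[5/2,e^e]$, $\Phi$ is bounded (wherever $\log\log y>0$). The small values near $y\approx e$, where $\log\log y$ is tiny and $\Phi$ is large, only make $E_c$ smaller; there I would simply bound $E_c\le1$ and absorb those finitely many terms into the constant via $E_{c'}(N)\gg_c 1$ for bounded $N$.
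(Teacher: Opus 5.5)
Your argument is correct, but it is organized differently from the paper's. The paper splits the range into dyadic blocks $\mathcal B_j=\{m: 2^jN\le k/m<2^{j+1}N\}$. On each block it bounds $E_c(k/m)\le E_c(2^jN)$ by monotonicity and uses $\sum_{m\in\mathcal B_j}1/m\le 1+\log 2$. It then compares $\sum_{j\ge0}E_c(2^jN)$ with $E_c(N)+\frac{1}{\log 2}\int_N^\infty E_c(u)\,\frac{du}{u}$ before applying Lemma~\ref{Eint}. You instead compare each term directly with $\int_m^{m+1}t^{-1}E_c(k/t)\,dt$ and substitute $u=k/t$, which reaches the same integral in one step. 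Your separately treated endpoint $m=\lfloor k/N\rfloor$ plays the role of the paper's isolated $E_c(N)$. The dyadic version needs only monotonicity and a crude harmonic bound per block; yours is shorter but needs the endpoint bookkeeping.

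Your second paragraph has the inequality backwards, though. For $t\in[m,m+1]$ with $m+1\le k/N$ you have $N\le k/t\le k/m$. Monotonicity of $E_c$ on $[5,\infty)$ then gives $E_c(k/m)\le E_c(k/t)$, which is exactly the direction you need. Combined with $1/m\le 2/t$, it yields $\frac1m E_c(k/m)\le 2\int_m^{m+1}t^{-1}E_c(k/t)\,dt$ with no comparability lemma at all.

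The comparability you then invoke, $E_c(y)\asymp E_c(y/2)$ ``uniformly for $y\ge5$'', is actually false as stated. As $y/2\to e^{+}$, $\log\log(y/2)\to0^{+}$, so $\Phi(y/2)\to+\infty$ and $E_c(y/2)\to0$, while $E_c(y)$ stays bounded away from $0$. It is harmless only because every $k/t$ in your integrals is at least $N\ge5$, so your last paragraph can simply be dropped.
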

	
	\begin{proof}
		For sufficiently large $u$, the function $\Phi$ is increasing. Indeed, writing $v=\log u$, let $\Phi(u)=\psi(v) = \frac{v^{3/5}}{(\log v)^{1/5}}$, then 
		
		\[ \psi'(v) = \frac{1}{5} v^{2/5}(\log v)^{1/5}\left(3-\frac{1}{(\log v)} \right) > 0\]
		
		once $\log v>1/3$, equivalently for $u> e^{e^{1/3}}>4$. Consequently $E_c$ is decreasing for $u \geq 5$.
		
		We first assume that $N\geq 5$. For each integer $j\geq 0$, let
		
		\begin{align*}
			\mathcal B_j
			&:=\left\{m\in\Nat:2^jN\leq\frac km<2^{j+1}N\right\}\\
			&=\left\{m\in\Nat:\frac{k}{2^{j+1}N}<m
			\leq\frac{k}{2^jN}\right\}\\
			&\subset\left(\frac{k}{2^{j+1}N},
			\frac{2k}{2^{j+1}N}\right].
		\end{align*}
		
		Every integer $m\leq k/N$ belongs to exactly one such block and  only finitely many blocks are nonempty, since $k/m \leq k$. Thus
		
		\[ \sum_{m\leq k/N}\frac{1}{m} E_{c}\left( \frac{k}{m}\right) = \sum_{j\geq 0} \sum_{m\in \mathcal{B}_j}\frac{1}{m} E_{c}\left( \frac{k}{m}\right) \]
		
		For $m \in \mathcal{B}_j$, $\frac{k}{m} \geq 2^j N$, then $E_{c}\left( \frac{k}{m}\right) \leq E_{c}(2^j N)$. Therefore,
		
		\[ \sum_{m\in \mathcal{B}_j}\frac{1}{m} E_{c}\left( \frac{k}{m}\right) \leq \sum_{m\in \mathcal{B}_j}\frac{1}{m} E_{c}(2^j N) \]
		
		If $0<a<1$, then 
		
		\[\sum_{a<m\leq 2a} \frac{1}{m} = 1 \leq 1 +\log 2 \]

		If $a\geq 1$, then
		
		\[\sum_{a<m\leq 2a} \frac{1}{m} \leq \frac{1}{[a]+1} + \int_{a}^{2a} \frac{dt}{t} \leq 1 +\log 2 \]
		
		Hence, uniformly in $j,k,N$,
		
		\[\sum_{m \in \mathcal{B}_j} \frac{1}{m} \leq 1 + \log 2\]
		
		It follows that
		
		\[ \sum_{m\leq k/N}\frac{1}{m} E_{c}\left( \frac{k}{m}\right) \leq (1+\log 2)\sum_{j\geq 0}  E_{c}(2^j N) \]
		
		It remains to estimate the dyadic sum. For every $j\geq 1$ and every $u \in[2^{j-1}N, 2^j N]$, we have $E_c(u) \geq E_c(2^j N)$. Therefore
		
		\[ \int_{2^{j-1}N}^{2^j N} \frac{E_c(u)}{u} du \geq E_c(2^j N) \int_{2^{j-1}N}^{2^j N} \frac{du}{u} = (\log 2) E_c(2^j N). \]
		
		Hence
		
		\[ \sum_{j\geq 1} E_c(2^j N) \leq \frac{1}{\log 2} \int_{N}^{+\infty} \frac{E_c(u)}{u} du, \]
		
		\noindent and 
		
		\[ \sum_{j\geq 0} E_c(2^j N) \leq E_c(N) + \frac{1}{\log 2} \int_{N}^{+\infty} \frac{E_c(u)}{u} du \]
		
		For $c'<c$ we have $E_c(N) \leq E_{c'}(N)$, and by Lemma \ref{Eint}
		
		\[ \int_{N}^{+\infty} \frac{E_c(u)}{u} du \ll  E_{c'}(N)\]

		Combining everything together
		
		\[ \sum_{m\leq k/N}\frac{1}{m} E_{c}\left( \frac{k}{m}\right) \ll  E_{c'}(N) \]
		
		This proves the result for $N\geq 5$. 
	\end{proof}
	
	\begin{lemma}
		\[
		\left|\frac1k\sum_{m=1}^k A_x\left(\frac mk\right)
		1_{[1/N,1]}(m/k)
		-\int_0^1A_x(t)1_{[1/N,1]}(t)\,dt\right|
		\ll\frac Nk.
		\]
	\end{lemma}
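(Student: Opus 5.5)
The plan is to use that $A_x$ is a step function of $t$ with few jumps on $[1/N,1]$, bounded uniformly in $x$. For $t\in(0,1]$, $A_x(t)=\sum_{n\le 1/t}\mu(n)x^n$ changes only when $1/t$ crosses an integer $n$, that is, at the points $t=1/n$. On $[1/N,1]$ the only jump points are therefore $t=1/n$ with $1\le n\le N$, at most $N$ of them. Between consecutive jumps, $A_x$ is constant. Moreover, for $t\ge 1/N$ we have the trivial bound
\[
|A_x(t)|\le \sum_{n\le N}x^n\le N
\]
uniformly in $x\in[0,1]$. (Lemma \ref{sup} would give a better bound, but the crude one suffices here.)

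Next, split $[0,1]$ into the cells $I_m=((m-1)/k,m/k]$ for $m=1,\dots,k$, and write the difference in the statement as
\[
\sum_{m=1}^{k}\int_{I_m}\Bigl(A_x(m/k)1_{[1/N,1]}(m/k)-A_x(t)1_{[1/N,1]}(t)\Bigr)\,dt .
\]
Call the function $G(t)=A_x(t)1_{[1/N,1]}(t)$. Then $G$ is a step function whose discontinuities lie in the set $\{1/n:1\le n\le N\}$; the cutoff at $t=1/N$ is already one of these points. Also $|G|\le N$ everywhere.

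Now compare $G$ with its right-endpoint value on each cell. If $I_m$ contains no discontinuity of $G$ in its interior, then $G$ is constant on $I_m$, so $G(t)=G(m/k)$ there and the cell contributes zero. This needs a little care at endpoints, since $A_x$ is right-continuous in $1/t$. Each discontinuity lies in at most one (or two, at a boundary) cells, so at most $2N$ cells contribute. Each such cell contributes at most $\frac1k\cdot 2\sup|G|\le 2N/k$. This gives a total of $\ll N^2/k$, not $N/k$.

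The main obstacle is removing this extra factor of $N$, and the fix is to bound the jumps rather than the size of $G$. The jump of $A_x$ at $t=1/n$ is $\mu(n)x^n$, of absolute value at most $1$. The jump of $G$ at the cutoff $t=1/N$ is $|A_x(1/N)|$, which can be as large as $N$, but it is a single point and hence a single cell, contributing at most $N/k$. On a cell containing jump points $1/n$ for $n\in J_m$, and not containing the cutoff, we have
\[
|G(t)-G(m/k)|\le \sum_{n\in J_m}|\mu(n)x^n|\le |J_m| .
\]
The sets $J_m$ are disjoint, so the total contribution from such cells is at most $\frac1k\sum_m|J_m|\le N/k$. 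Adding the cutoff cell gives $\ll N/k$ uniformly in $x$, as required.
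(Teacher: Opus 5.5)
Your argument is correct and is essentially the paper's proof. The paper quotes the standard bound that the difference between the right-endpoint Riemann sum and the integral is $\ll \mathrm{Var}_{[0,1]}(A_x1_{[1/N,1]})/k$ and bounds the variation by the jumps $|\mu(n)x^n|\le 1$; your cell-by-cell comparison is exactly that bound proved by hand, and your separate treatment of the cutoff jump of size $|A_x(1/N)|\le N$ is slightly more careful than the paper, whose identity $\mathrm{Var}_{[0,1]}(A_x1_{[1/N,1]})=\mathrm{Var}_{[1/N,1]}(A_x)$ omits that jump (the true variation is at most $2N$, which still gives $\ll N/k$).
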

	
	\begin{proof}
		For a function $f : [a,b] \longrightarrow \Real$ define 
		
		$$\mathrm{Var}_{[a,b]}(f) := \sup_{a=t_0< t_1 < ... < t_m=b}\sum_{i=1}^{m}|f(t_i)-f(t_{i-1})|,$$
		
		\noindent where the supremum is taken over all the partitions of the interval $[a,b]$. The comparison of Riemann sums with the integral has the following well known bound:
		\[
		\left|\frac1k\sum_{m=1}^k A_x\left(\frac mk\right)
		1_{[1/N,1]}(m/k)
		-\int_0^1A_x(t)1_{[1/N,1]}(t)\,dt\right|
		\ll\frac{\mathrm{Var}_{[0,1]}(A_x1_{[1/N,1]})}{k}.
		\]
		To conclude, observe that
		\[
		\mathrm{Var}_{[0,1]}(A_x1_{[1/N,1]})
		=\mathrm{Var}_{[1/N,1]}(A_x)
		=\sum_{n=1}^{N}|\mu(n)x^n|\leq N.
		\]
		Indeed, the discontinuities of $A_x$ occur at $t=1/n$ for
		$1\leq n\leq N$.
	\end{proof}
	
	\begin{prop}
		There is a constant $c_3>0$, such that
		\[\|Q_k - g\| \ll E_{c_3}(k)\]	
	\end{prop}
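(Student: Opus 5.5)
We combine the Riemann-sum representation $Q_k(x)=\frac1k\sum_{m=1}^k A_x(m/k)$ with the integral representation $g(x)=\int_0^1 A_x(t)\,dt$, and split the interval $[0,1]$ at a cut-off $1/N$, where $N=N(k)\leq k$ will be chosen at the end. Writing $\mathbf 1=1_{[1/N,1]}+1_{[0,1/N[}$, we decompose, for every $x\in[0,1]$,
\[
Q_k(x)-g(x)=\Delta_1+\Delta_2-\Delta_3,
\]
where $\Delta_1$ is the difference between the truncated Riemann sum and the truncated integral (on $[1/N,1]$), $\Delta_2:=\frac1k\sum_{m<k/N}A_x(m/k)$ is the head of the Riemann sum, and $\Delta_3:=\int_0^{1/N}A_x(t)\,dt$ is the tail of the integral. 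Each piece will be bounded uniformly in $x$.

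For $\Delta_1$ the last lemma gives $|\Delta_1|\ll N/k$ directly. For $\Delta_3$ we use the sup bound $\sup_x|A_x(t)|\ll t^{-1}E_{c_1}(1/t)$ from the lemma on $A_x$. The substitution $u=1/t$ turns $\int_0^{1/N}t^{-1}E_{c_1}(1/t)\,dt$ into $\int_N^{\infty}E_{c_1}(u)\,u^{-1}\,du$, and Lemma \ref{Eint} bounds this by $E_{c'}(N)$. For $\Delta_2$ the same sup bound at $t=m/k$ gives
\[
|\Delta_2|\ll\frac1k\sum_{m\leq k/N}\frac km E_{c_1}\!\left(\frac km\right)=\sum_{m\leq k/N}\frac1m E_{c_1}\!\left(\frac km\right),
\]
which is exactly the sum treated in Lemma \ref{Esum}, giving $\ll E_{c''}(N)$ uniformly for $k\geq N\geq5$. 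We need a little care at boundary points where $m/k=1/N$ exactly, but the indicator conventions only shift one term, which is $O(1/k)$ times a bounded quantity, or is absorbed into the same sums.

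Altogether $\|Q_k-g\|\ll N/k+E_{c'}(N)$ with $c'>0$. The main point is the choice of $N$: we need $N/k$ to be $\ll E_{c_3}(k)$ while $E_{c'}(N)$ is also $\ll E_{c_3}(k)$. We take $N=\lfloor\sqrt k\rfloor$. Then $N/k\leq k^{-1/2}$, which is far smaller than any $E_c(k)$. Also $\Phi(\sqrt k)\sim2^{-3/5}\Phi(k)$, exactly as in the proof of Lemma \ref{sup}, so $E_{c'}(\sqrt k)\ll E_{c_3}(k)$ for any $c_3<2^{-3/5}c'$. Small $k$ (those with $N<5$) are absorbed into the implied constant, since $|Q_k|$ and $|g|$ are bounded on $[0,1]$.

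I expect the only delicate step to be checking that the indicator and endpoint conventions in the Riemann-sum lemma match the decomposition exactly. All the genuine analytic input has already been isolated in Lemmas \ref{sup}, \ref{Eint} and \ref{Esum}.
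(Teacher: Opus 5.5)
Your proposal is correct and follows the paper's proof essentially step for step. It uses the same three-way split of $Q_k-g$: the Riemann-sum error on $[1/N,1]$, bounded by $\ll N/k$ via the variation lemma; the head $\frac1k\sum_{m<k/N}A_x(m/k)$, bounded via Lemma~\ref{Esum}; and the tail $\int_0^{1/N}A_x$, bounded via Lemma~\ref{Eint}. It then makes the same choice $N=\lfloor\sqrt k\rfloor$. Your endpoint worry is unnecessary, since $1_{[1/N,1]}(m/k)=1$ exactly when $m\ge k/N$, so the decomposition is exact. Your explicit handling of small $k$ and of $k^{-1/2}\ll E_{c_3}(k)$ just fills in details the paper leaves implicit.
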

	
	\begin{proof}
		Using Lemma \ref{Eint}, we have 
		
		\[ \left| \int_{0}^{1/N} A_x(t)dt \right| \ll \int_{0}^{1/N} \frac{1}{t}E_{c_1}(1/t)dt = \int_{N}^{+\infty}\frac{E_{c_1}(u)}{u}du \ll E_{c_2}(N) \]
		
		We also have
		
		\[ \left| \frac{1}{k} \sum_{m<k/N} A_x\left( \frac{m}{k} \right) \right| \ll \frac{1}{k} \sum_{m<k/N} \frac{k}{m} E_{c_1}(k/m) = \sum_{m<k/N} \frac{1}{m} E_{c_1}(k/m) \ll E_{c_2}(N) \]
		
		Combing these two bounds with the previous lemma we get:
		
		\[ ||Q_k-g|| \ll \frac{N}{k} + E_{c_2}(N) \]
		
		Let now $N=[\sqrt(k)]$, then $||Q_k-g|| \ll  E_{c_3}(k)$.
		
	\end{proof}
	
	\begin{prop}
		The series $\sum_{k\geq 1} \frac{||h_k||}{k(k+1)}$ converges, in particular the function $\sum_{k\geq 1} \frac{h_k}{k(k+1)}$ is continuous.
	\end{prop}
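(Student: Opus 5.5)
The plan is to use the definition $h_k(x)=k\bigl(g(x)-Q_k(x)\bigr)$ together with the preceding proposition $\|Q_k-g\|\ll E_{c_3}(k)$. For every $k\ge 1$ this gives
\[
\|h_k\|=k\,\|g-Q_k\|\ll k\,E_{c_3}(k),
\qquad\text{hence}\qquad
\frac{\|h_k\|}{k(k+1)}\ll \frac{E_{c_3}(k)}{k+1}\le\frac{E_{c_3}(k)}{k}.
\]
The bound is only asserted for large $k$. For the finitely many small $k$, each $h_k$ is a bounded continuous function on $[0,1]$: $g$ is continuous on $[0,1]$ by the prime number theorem, and $Q_k$ is a polynomial. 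So $\|h_k\|<\infty$ for each $k$, and these terms do not affect convergence.

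Next I would show that $\sum_{k\ge 5}E_{c_3}(k)/k$ converges. On $k\ge 5$ the function $E_{c_3}$ is decreasing, as established in the proof of Lemma \ref{Esum}. Hence $E_{c_3}(k)/k\le\int_{k-1}^{k}E_{c_3}(u)\,u^{-1}\,\frac{k-1}{k}\cdot\frac{u}{k-1}\,du$, which is simply $\ll\int_{k-1}^{k}E_{c_3}(u)\,du/u$. Summing over $k$ bounds the tail by $\int_{4}^{\infty}E_{c_3}(u)\,du/u$, and this is finite by Lemma \ref{Eint}. One could also argue directly: substituting $u=e^{v}$ turns the integral into $\int e^{-c_3\psi(v)}\,dv$ with $\psi(v)\gg v^{1/2}$, which converges. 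This step is the main point, but it is routine once Lemma \ref{Eint} is available. The essential input is that the Korobov--Vinogradov saving beats the factor $k$ lost in passing from $\|g-Q_k\|$ to $\|h_k\|$. This is why the weaker bound $e^{-c\sqrt{\log k}}$ from Lemma \ref{bigO} would also suffice, whereas a mere $o(1)$ rate for $Q_k\to g$ would not.

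For continuity, each $h_k$ is continuous on $[0,1]$, being a continuous function minus a polynomial. The series $\sum_k h_k/(k(k+1))$ is dominated termwise in sup norm by the convergent numerical series $\sum_k\|h_k\|/(k(k+1))$. By the Weierstrass M-test it converges uniformly on $[0,1]$, and its sum is continuous. In particular, by the earlier expression $\langle 1|f(u)\rangle=-\sum_k h_k(u)/(k(k+1))$, this recovers the continuity of $\langle 1|f(u)\rangle$ at $u=0$. Since $h_k(1)=-1$ for every $k$ (Corollary \ref{mobius}), the limit equals $\sum_k 1/(k(k+1))=1$. This gives the promised second proof of $\langle 1|f(u)\rangle\to 1$.
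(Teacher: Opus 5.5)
Your proposal is correct and follows the paper's proof. You bound $\|h_k\|/(k(k+1))\ll E_{c_3}(k)/k$ via $h_k=k(g-Q_k)$ and the preceding proposition, compare with $\int E_{c_3}(t)\,dt/t$ (which converges after $t=e^{v}$ because $\psi(v)\ge\sqrt v$ eventually), and get continuity from the Weierstrass test. The only cosmetic point is that the monotone comparison on $[k-1,k]$ should start at $k\ge 6$, since $E_{c_3}$ is only known to decrease for $u>e^{e^{1/3}}\approx 4.04$; this does not affect convergence.
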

	
	\begin{proof}
		Using the previously established bound and the comparison of series with integral, it is enough to show that the integral
		
		\[ \int_{K}^{+\infty} \frac{1}{t} E_{c_3}(t) = \int_{\log K}^{+\infty}  \exp\left( -c_3\frac{t^{3/5}}{(\log t)^{1/5}}\right)  dt \]
		
		\noindent is convergent. However, for $t$ sufficiently large $\frac{t^{3/5}}{(\log t)^{1/5}} \geq \sqrt(t)$, and 
		
		$$\int_{1}^{+\infty} \exp(-c_3\sqrt(t))   dt < +\infty$$
	\end{proof}
	
	\section{The canonical third-order cotangent truncation}
	
	\newcommand{\N}{\Nat}
	\renewcommand{\1}{\mathbf 1}
	\newcommand{\core}{\mathrm{core}}
	\newcommand{\bulk}{\mathrm{bulk}}
	\newcommand{\Fthree}{\mathcal F_{[3]}}
	\newcommand{\Rthree}{\mathscr R_{[3]}}
	\newcommand{\Kfour}{K_4}
	\newcommand{\Ccoef}{C}
	
	Let \(\mu\) be the M\"obius function and set
	\[
	\cstar:=\gamma-\log(2\pi).
	\]
	The function studied below is the canonical ordered-pair third-order
	truncation of the original function \(F\): every occurrence of the exact
	remainder \(\rho_3\) is replaced by zero before any core--edge pairing is
	performed.  We denote it by $\Fthree$.

	There is a small but important logical point in this definition.  If one
	first pairs a positive initial edge \((n,a)\) with its transpose \((a,n)\),
	the exact term \(A(a,n)\) absorbs the remainder belonging to the transposed
	pair.  Deleting only the remainders which remain visibly displayed after
	that pairing produces an \(H\)-dependent object.  It is therefore not the
	operation ``set \(\rho_3=0\) everywhere.''  The ordered-pair definition in
	\ref{def:F3} below is canonical and independent of \(H\).
	
	Our goal is to investigate
	\begin{equation}\label{eq:boundedness-goal}
		\Fthree(x)=O(1)\qquad(x\uparrow1).
	\end{equation}
	We shall prove a number of unconditional reductions and cancellations, but
	not \eqref{eq:boundedness-goal}.  The final sections state exactly what is
	still missing.

	\subsection{The exact cotangent sum}
	
	For \(n,m\in\N\), put \(g=(n,m)\), \(N=n/g\), and \(M=m/g\), and define
	\begin{equation}\label{eq:def-A}
		A(n,m):=
		g\sum_{k=1}^{N-1}
		\left(\frac12-\left\{\frac{kM}{N}\right\}\right)
		\cot\left(\frac{\pi k}{N}\right).
	\end{equation}
	If \(N=1\), the sum is empty and \(A(n,m)=0\).  For \((n,m)=1\) and
	\(n\ge2\), the map \(m\mapsto A(n,m)\) is periodic modulo \(n\) and odd:
	\begin{equation}\label{eq:A-period-odd}
		A(n,m+qn)=A(n,m),\qquad A(n,-m)=-A(n,m).
	\end{equation}
	Indeed, periodicity is immediate, while
	\(\{-km/n\}=1-\{km/n\}\) for \(n\nmid m\).
	
	This is a standard Vasyunin--Estermann cotangent sum in a normalization
	adapted to the present double series.  More precisely, let
	\[
	c_0(h/k):=-\sum_{r=1}^{k-1}\frac r k
	\cot\left(\frac{\pi rh}{k}\right).
	\]
	If \((n,m)=1\) and \(m\overline m\equiv1\pmod n\), then the change of
	variable \(r\equiv km\pmod n\), together with
	\(\sum_{k=1}^{n-1}\cot(\pi k/n)=0\), gives
	\begin{equation}\label{eq:A-c0}
		A(n,m)=c_0(\overline m/n).
	\end{equation}
	The occurrence of these sums in the Hilbert-space approach goes back to
	Vasyunin \cite{Vasyunin1996}.  Their reciprocity, period functions,
	distribution, and connection with the Estermann zeta function have been
	developed in \cite{BettinConrey2013Period,
		BettinConrey2013Reciprocity,Bettin2015,MaierRassias2016,Rassias2014};
	see also \cite{DarsesHillion2021} for an exponentially averaged Vasyunin
	formula.  The ordered third-order truncation and the uniform estimates
	below are derived directly for the normalization \eqref{eq:def-A}.
	
	For \(n\ge2\) and \((n,m)=1\), let
	\[
	r_n(m)\in\{1,\ldots,n-1\}
	\]
	be the least positive residue of \(m\pmod n\), and define
	\begin{equation}\label{eq:a-epsilon}
		a_{n,m}:=\min\{r_n(m),n-r_n(m)\},\qquad
		\eps_{n,m}:=
		\begin{cases}
			+1,&r_n(m)\le n/2,\\
			-1,&r_n(m)>n/2.
		\end{cases}
	\end{equation}
	Thus
	\begin{equation}\label{eq:signed-A-basic}
		A(n,m)=\eps_{n,m}A(n,a_{n,m}).
	\end{equation}
	
	\subsection{The seven terms of the truncation}
	
	For \(a\ge1\), \((a,n)=1\), and \(1\le j<a\), put
	\[
	\theta_j=\left\{\frac{jn}{a}\right\},\qquad u_j=\frac{\pi j}{a}.
	\]
	We use
	\[
	\begin{aligned}
		B_1(t)&=t-\frac12,&
		B_2(t)&=t^2-t+\frac16,\\
		B_3(t)&=t^3-\frac32t^2+\frac12t,&
		B_4(t)&=t^4-2t^3+t^2-\frac1{30}.
	\end{aligned}
	\]
	Define
	\begin{align}
		\Ccoef_0(a;n)
		&:=a+\pi\sum_{j=1}^{a-1}B_1(\theta_j)\cot u_j,
		\label{eq:C0}\\
		\Ccoef_1(a;n)
		&:=\frac{\pi^2}{36}
		+\frac{\pi^2}{2}\sum_{j=1}^{a-1}B_2(\theta_j)\csc^2u_j,
		\label{eq:C1}\\
		\Ccoef_2(a;n)
		&:=\frac{\pi^3}{3}\sum_{j=1}^{a-1}
		B_3(\theta_j)\csc^2u_j\cot u_j,
		\label{eq:C2}\\
		\Ccoef_3(a;n)
		&:=-\frac{\pi^4}{5400}
		+\frac{\pi^4}{12}\sum_{j=1}^{a-1}B_4(\theta_j)
		\bigl(2\csc^2u_j\cot^2u_j+\csc^4u_j\bigr).
		\label{eq:C3}
	\end{align}
	The third-order kernel is
	\begin{equation}\label{eq:K4}
		\Kfour(n,a):=
		\log n-\log a+\cstar
		+\frac{\Ccoef_0(a;n)}n
		+\frac{\Ccoef_1(a;n)}{n^2}
		+\frac{\Ccoef_2(a;n)}{n^3}
		+\frac{\Ccoef_3(a;n)}{n^4}.
	\end{equation}
	For later use, define the globally ordered truncation
	\begin{equation}\label{eq:T3}
		T_{[3]}(n,m):=
		\begin{cases}
			\eps_{n,m}\Kfour(n,a_{n,m}),&n\ge2,\\
			0,&n=1.
		\end{cases}
	\end{equation}
	Equivalently, it is the sum of the seven pieces
	\begin{align}
		K_{4,[0]}(n,a)&=\log n,&
		K_{4,[1]}(n,a)&=-\log a,\notag\\
		K_{4,[2]}(n,a)&=\cstar,&
		K_{4,[3]}(n,a)&=\frac{\Ccoef_0(a;n)}n,\notag\\
		K_{4,[4]}(n,a)&=\frac{\Ccoef_1(a;n)}{n^2},&
		K_{4,[5]}(n,a)&=\frac{\Ccoef_2(a;n)}{n^3},\notag\\
		K_{4,[6]}(n,a)&=\frac{\Ccoef_3(a;n)}{n^4}.
		\label{eq:seven-pieces}
	\end{align}
	
	The exact remainder is defined, for every admissible \((n,a)\), by
	\begin{equation}\label{eq:rho3}
		\rho_3(n,a)
		:=\pi\frac{A(n,a)}n-\Kfour(n,a).
	\end{equation}
	Equivalently,
	\begin{equation}\label{eq:A-K-rho-unsigned}
		\pi\frac{A(n,a)}n=\Kfour(n,a)+\rho_3(n,a).
	\end{equation}
	Consequently, without any asymptotic assertion,
	\begin{equation}\label{eq:A-K-rho}
		\pi\frac{A(n,m)}n
		=\eps_{n,m}\bigl(\Kfour(n,a_{n,m})
		+\rho_3(n,a_{n,m})\bigr).
	\end{equation}
	\begin{thm}[Fixed-residue expansion through third order]
		\label{thm:fixed-residue-third-order}
		For every fixed integer \(H\ge1\),
		\begin{equation}\label{eq:rho-fixed-estimate}
			\rho_3(n,a)=O_H(n^{-5})
		\end{equation}
		uniformly for \(1\le a\le H\), \((a,n)=1\), as \(n\to\infty\).
		Equivalently,
		\begin{equation}\label{eq:fixed-third-order}
			\pi\frac{A(n,a)}n
			=\log\frac na+\cstar
			+\sum_{\nu=0}^{3}\frac{\Ccoef_\nu(a;n)}{n^{\nu+1}}
			+O_H(n^{-5}).
		\end{equation}
	\end{thm}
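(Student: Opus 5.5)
We prove the expansion of $\pi A(n,a)/n$ for fixed $a$ via the Estermann--Vasyunin reciprocity, which moves the large modulus $n$ into the argument of a cotangent sum to the small modulus $a$. For $(a,n)=1$, the reciprocity for $c_0$ (see \cite{BettinConrey2013Reciprocity}), combined with \eqref{eq:A-c0}, writes
\[
c_0\!\left(\frac{\overline a}{n}\right)
\]
in terms of $c_0(\overline n/a)$, an explicit term $\frac{n}{\pi a}$ (up to normalization), and a period-function term. Equivalently, we can start from \eqref{eq:def-A} directly. Writing $\frac12-\{ka/n\}$ through its Fourier series, or splitting $k$ according to $\lfloor ka/n\rfloor$, gives $a$ blocks of lengths about $n/a$. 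On each block $\{ka/n\}$ is affine in $k$ with shift $\theta_j=\{jn/a\}$. The cotangent sum then becomes, block by block, a sum of $\cot(\pi k/n)$ against linear weights over arithmetic progressions. We sum these with Euler--Maclaurin, or with the partial fraction expansion $\pi\cot\pi z=\sum_{\ell}1/(z-\ell)$.

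Concretely, the plan is as follows. First, we reduce to the main sum $\frac{\pi}{n}\sum_{k=1}^{n-1}\bigl(\tfrac12-\{ka/n\}\bigr)\cot(\pi k/n)$. Second, we substitute $k=(jn+r)/a$, that is, we group $k$ by the residue class determined by $ka\equiv r\pmod n$. Third, we expand $\cot(\pi k/n)$ near each block boundary $k\approx jn/a$. The interior points contribute the Riemann-sum main term $\frac{1}{\pi}\int$, which yields $\log(n/a)$ together with the constant $\cstar=\gamma-\log 2\pi$. The latter comes out exactly as in Vasyunin's $a=1$ asymptotic quoted in the previous section, and we can use that formula as the base case to identify the constant. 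The jumps of the sawtooth at $k=jn/a$, $1\le j\le a-1$, produce boundary corrections. Euler--Maclaurin at a boundary with fractional offset $\theta_j$ brings in the Bernoulli polynomials $B_{\nu+1}(\theta_j)$ multiplied by the successive derivatives of $\cot$ at $u_j=\pi j/a$. These derivatives are $-\csc^2u$, $2\csc^2u\cot u$, and $-(4\csc^2u\cot^2u+2\csc^4u)$, which produce exactly the kernels appearing in $C_0,\dots,C_3$. The factor $\pi^{\nu+1}/n^{\nu+1}$ comes from the chain rule, and the normalizing factorials give $\pi^2/2$, $\pi^3/3$, and $\pi^4/12$.

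The $j=0$ (and $j=a$) edge, where $\cot$ has a pole, is handled separately. There we use the Laurent expansion $\pi\cot\pi z=1/z-\frac{\pi^2z}{3}-\frac{\pi^4z^3}{45}-\cdots$ together with the known $a=1$ asymptotics. This produces the isolated constants in the formulas: the $a$ in $C_0$, $\pi^2/36$ in $C_1$, and $-\pi^4/5400$ in $C_3$. We can check these against the exact $a=1$ case, where all sums over $j$ are empty. Truncating Euler--Maclaurin after the fourth derivative leaves a remainder bounded by $n^{-5}$ times $\sup|\cot^{(5)}|$ on the region away from the poles, plus the pole contributions. Since $a\le H$, the points $u_j$ stay at distance at least $\pi/H$ from $0$ and $\pi$, so the remainder is $O_H(n^{-5})$ uniformly in $a$. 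This is \eqref{eq:rho-fixed-estimate}, and by \eqref{eq:rho3} it is equivalent to \eqref{eq:fixed-third-order}.

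The main obstacle is the bookkeeping at the singular edges $k$ near $0$ and near $n$. There the Euler--Maclaurin expansion of $\cot(\pi k/n)$ is not uniform, and the sawtooth offsets interact with the pole. We would first isolate the pole by writing $\cot(\pi k/n)=\frac{n}{\pi k}-\frac{n}{\pi(n-k)}+\bigl(\text{analytic on }[0,\pi]\bigr)$. The analytic part is then handled by Euler--Maclaurin. The $1/k$ parts are summed exactly against $\frac12-\{ka/n\}$, giving harmonic sums over progressions modulo $a$; their digamma asymptotics supply $\log$, $\gamma$, and the $B_\nu(\theta_j)$ terms. Verifying that the constants match $C_0,\dots,C_3$ exactly, rather than only up to an $O(n^{-5})$ error, is the delicate computation.
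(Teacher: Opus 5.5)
Your plan is essentially the paper's proof: blockwise shifted Euler--Maclaurin across the cuts $j/a$ produces $B_{\nu+1}(\theta_j)$ times the jumps $\pi^\nu\cot^{(\nu)}(\pi j/a)$, and the endpoint constants come from the Laurent expansion once the poles are removed. The only structural difference is that the paper subtracts the singular part $p(t)=\frac1{2\pi}\bigl(\frac1t+\frac1{1-t}\bigr)$ of the whole product $\bigl(\frac12-\{at\}\bigr)\cot(\pi t)$. Then only $\frac n\pi H_{n-1}$ is summed exactly, and no digamma expansions at shifted block ends are needed. One point to tighten: the $a=1$ asymptotic cannot act as a ``base case'' for the $a$-dependent linear term, so you must actually evaluate $\int_0^1\bigl[\bigl(\frac12-\{at\}\bigr)\cot(\pi t)-p(t)\bigr]\,dt=-\frac1\pi\log(2\pi a)$. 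The paper does this by integrating by parts across the cuts and using $\prod_{j=1}^{a-1}\sin(\pi j/a)=a/2^{a-1}$ and $\int_0^1\log\sin(\pi t)\,dt=-\log 2$.
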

	
	\begin{proof}
		Fix \(H\), and let \(1\le a\le H\) with \((a,n)=1\).  Away from the
		cuts \(j/a\), \(0\le j\le a\), put
		\[
		s_a(t):=\frac12-\{at\},\qquad
		f_a(t):=s_a(t)\cot(\pi t).
		\]
		We first remove the two endpoint poles.  Define
		\[
		p(t):=\frac1{2\pi}\left(\frac1t+\frac1{1-t}\right),
		\qquad
		g_a(t):=f_a(t)-p(t).
		\]
		The function \(g_a\) is smooth on each open interval
		\((j/a,(j+1)/a)\), has one-sided derivatives of every order at the
		interior cuts, and extends smoothly from the right at \(0\) and from the
		left at \(1\).  Since \((a,n)=1\), no lattice point \(k/n\),
		\(1\le k\le n-1\), is an interior cut.  Moreover,
		\[
		\sum_{k=1}^{n-1}p(k/n)
		=\frac{n}{2\pi}\sum_{k=1}^{n-1}
		\left(\frac1k+\frac1{n-k}\right)
		=\frac n\pi H_{n-1}.
		\]
		Consequently,
		\begin{equation}\label{eq:A-singular-split}
			A(n,a)=\frac n\pi H_{n-1}
			+\sum_{k=1}^{n-1}g_a(k/n).
		\end{equation}
		
		We next compute the integral of the regularized function.  Integrating
		\(f_a\) by parts separately on the intervals cut out by \(j/a\), and
		then letting \(\varepsilon\downarrow0\), gives
		\begin{align*}
			\int_\varepsilon^{1-\varepsilon}f_a(t)\,\dd t
			={}&-\frac1\pi\log\sin(\pi\varepsilon)
			-\frac1\pi\sum_{j=1}^{a-1}\log\sin\frac{\pi j}{a}\\
			&+\frac a\pi\int_\varepsilon^{1-\varepsilon}
			\log\sin(\pi t)\,\dd t+o(1).
		\end{align*}
		At the cut \(j/a\), the boundary term follows from
		\(s_a((j/a)-)=-1/2\) and \(s_a((j/a)+)=1/2\).  Using
		\[
		\prod_{j=1}^{a-1}\sin\frac{\pi j}{a}
		=\frac{a}{2^{a-1}},
		\qquad
		\int_0^1\log\sin(\pi t)\,\dd t=-\log2,
		\]
		and
		\[
		\int_\varepsilon^{1-\varepsilon}p(t)\,\dd t
		=\frac1\pi\bigl(\log(1-\varepsilon)-\log\varepsilon\bigr),
		\]
		we obtain
		\begin{equation}\label{eq:ga-integral}
			\int_0^1g_a(t)\,\dd t=-\frac1\pi\log(2\pi a).
		\end{equation}
		
		We now apply shifted Euler--Maclaurin summation separately on the
		intervals determined by the cuts \(j/a\).  Write
		\[
		c_j:=\frac ja,\qquad
		\theta_j:=\{nc_j\}=\left\{\frac{jn}{a}\right\},
		\qquad
		\Delta_j g_a^{(\nu)}
		:=g_a^{(\nu)}(c_j+)-g_a^{(\nu)}(c_j-).
		\]
		Applying Euler--Maclaurin through Bernoulli degree five on each block and
		adding the block formulas yields
		\begin{align}
			\sum_{k=1}^{n-1}g_a(k/n)
			={}&n\int_0^1g_a(t)\,\dd t
			-\frac{g_a(0+)+g_a(1-)}2 \notag\\
			&+\frac{g_a'(1-)-g_a'(0+)}{12n}
			-\frac{g_a'''(1-)-g_a'''(0+)}{720n^3}\notag\\
			&+\sum_{j=1}^{a-1}\sum_{\nu=0}^{3}
			\frac{(-1)^\nu B_{\nu+1}(\theta_j)}
			{(\nu+1)!\,n^\nu}\,
			\Delta_j g_a^{(\nu)}
			+O_H(n^{-4}).
			\label{eq:shifted-EM-ga}
		\end{align}
		We use Euler--Maclaurin summation in its periodic-Bernoulli form.  See
		\cite{NIST2010}; see also \cite{Olver1997}.
		For completeness, the cut term is obtained by applying ordinary
		Euler--Maclaurin to the lattice points in each open block and using
		\(B_r(1-t)=(-1)^rB_r(t)\) \cite[\S~24.4(ii)]{NIST2010} when the two
		one-sided boundary terms at
		\(c_j\) are combined.  The omitted Bernoulli-degree-five cut term
		(which is not asserted to vanish) and the corresponding remainder are
		\(O_H(n^{-4})\): there are at most \(H-1\)
		cuts, and after the endpoint singularities have been subtracted, all
		one-sided derivatives required here are bounded by constants depending
		only on \(H\).  This also proves the asserted uniformity for
		\(1\le a\le H\).
		
		At an interior cut, \(s_a\) has jump \(+1\), while its derivative on
		both adjacent intervals is \(-a\).  Since \(p\) is smooth there, the
		Leibniz rule gives, for \(0\le\nu\le3\),
		\begin{equation}\label{eq:ga-jumps}
			\Delta_j g_a^{(\nu)}
			=\pi^\nu\left.
			\frac{\dd^\nu}{\dd u^\nu}\cot u
			\right|_{u=\pi j/a}.
		\end{equation}
		The identities
		\[
		\cot' u=-\csc^2u,\qquad
		\cot''u=2\csc^2u\cot u,
		\]
		\[
		\cot'''u=-2\bigl(2\csc^2u\cot^2u+\csc^4u\bigr)
		\]
		show that the four cut contributions in
		\eqref{eq:shifted-EM-ga} are
		\begin{align*}
			&\sum_{j=1}^{a-1}B_1(\theta_j)\cot u_j,\\
			&\frac{\pi}{2n}\sum_{j=1}^{a-1}
			B_2(\theta_j)\csc^2u_j,\\
			&\frac{\pi^2}{3n^2}\sum_{j=1}^{a-1}
			B_3(\theta_j)\csc^2u_j\cot u_j,\\
			&\frac{\pi^3}{12n^3}\sum_{j=1}^{a-1}B_4(\theta_j)
			\bigl(2\csc^2u_j\cot^2u_j+\csc^4u_j\bigr),
		\end{align*}
		where \(u_j=\pi j/a\).
		
		It remains to compute the endpoint terms.  From
		\[
		\cot(\pi t)=\frac1{\pi t}-\frac{\pi t}{3}
		-\frac{\pi^3t^3}{45}+O(t^5)
		\]
		and \((1-t)^{-1}=1+t+t^2+t^3+O(t^4)\), one obtains, uniformly for
		\(a\le H\),
		\begin{align*}
			g_a(t)={}&-\frac{a+1/2}{\pi}
			-\left(\frac\pi6+\frac1{2\pi}\right)t
			+\left(\frac{a\pi}{3}-\frac1{2\pi}\right)t^2\\
			&-\left(\frac{\pi^3}{90}+\frac1{2\pi}\right)t^3
			+O_H(t^4).
		\end{align*}
		Also \(g_a(1-t)=g_a(t)\).  Hence
		\[
		g_a(0+)=g_a(1-)=-\frac{a+1/2}{\pi},
		\]
		\[
		g_a'(1-)-g_a'(0+)=\frac\pi3+\frac1\pi,
		\]
		and
		\[
		g_a'''(1-)-g_a'''(0+)=\frac{2\pi^3}{15}+\frac6\pi.
		\]
		Thus the endpoint contributions in \eqref{eq:shifted-EM-ga} are
		\begin{equation}\label{eq:EM-endpoint-values}
			\frac{a+1/2}{\pi},\qquad
			\frac1n\left(\frac\pi{36}+\frac1{12\pi}\right),
			\qquad
			-\frac1{n^3}\left(\frac{\pi^3}{5400}
			+\frac1{120\pi}\right).
		\end{equation}
		Finally,
		\begin{equation}\label{eq:Hn-expansion-third-order}
			\frac n\pi H_{n-1}
			=\frac n\pi(\log n+\gamma)
			-\frac1{2\pi}-\frac1{12\pi n}
			+\frac1{120\pi n^3}+O(n^{-5}).
		\end{equation}
		Combining \eqref{eq:A-singular-split}, \eqref{eq:ga-integral},
		\eqref{eq:shifted-EM-ga}, \eqref{eq:EM-endpoint-values}, and
		\eqref{eq:Hn-expansion-third-order}, the additional rational multiples
		of \(1/\pi\) cancel, and we obtain
		\begin{align*}
			A(n,a)={}&\frac n\pi\left(\log\frac na+\cstar\right)
			+\frac1\pi\Ccoef_0(a;n)+\frac1{\pi n}\Ccoef_1(a;n)\\
			&+\frac1{\pi n^2}\Ccoef_2(a;n)
			+\frac1{\pi n^3}\Ccoef_3(a;n)+O_H(n^{-4}).
		\end{align*}
		Multiplication by \(\pi/n\) proves
		\eqref{eq:fixed-third-order}; the definition \eqref{eq:rho3} then gives
		\eqref{eq:rho-fixed-estimate}.
	\end{proof}
	
	\begin{coro}[The residue \(a=1\)]\label{cor:rho-a-one}
		One has
		\[
		\begin{aligned}
			\Kfour(n,1)
			&=\log n+\cstar+\frac1n
			+\frac{\pi^2}{36n^2}-\frac{\pi^4}{5400n^4},\\
			\rho_3(n,1)&=O(n^{-6}).
		\end{aligned}
		\]
	\end{coro}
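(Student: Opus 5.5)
The first assertion is immediate from the definitions, and the second follows by rerunning the proof of Theorem~\ref{thm:fixed-residue-third-order} for \(a=1\), where there are no interior cuts, and using a parity argument to gain one extra power of \(n\).

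\textbf{Step 1: the kernel.} For \(a=1\) the sums \(\sum_{j=1}^{a-1}\) in \eqref{eq:C0}--\eqref{eq:C3} are empty. Hence \(\Ccoef_0(1;n)=1\), \(\Ccoef_1(1;n)=\pi^2/36\), \(\Ccoef_2(1;n)=0\) and \(\Ccoef_3(1;n)=-\pi^4/5400\). Since \(\log a=0\), substituting into \eqref{eq:K4} gives the stated formula for \(\Kfour(n,1)\).

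\textbf{Step 2: the regularized function.} For \(a=1\) the function \(g_1(t)=(\tfrac12-t)\cot(\pi t)-p(t)\) has no cut on \((0,1)\). It extends to a real-analytic function on all of \([0,1]\), since the poles at \(0\) and \(1\) have been removed by \(p\). It also satisfies \(g_1(1-t)=g_1(t)\). Formula \eqref{eq:A-singular-split} still holds, namely
\[
A(n,1)=\frac n\pi H_{n-1}+\sum_{k=1}^{n-1}g_1(k/n).
\]

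\textbf{Step 3: Euler--Maclaurin with no cut terms.} We apply ordinary Euler--Maclaurin to \(\sum_{k=0}^{n}g_1(k/n)\), correcting for the two endpoint terms. The result is
\[
n\int_0^1 g_1\,\dd t-\frac{g_1(0)+g_1(1)}2
+\sum_{r=1}^{3}\frac{B_{2r}}{(2r)!\,n^{2r-1}}\bigl(g_1^{(2r-1)}(1)-g_1^{(2r-1)}(0)\bigr)+O(n^{-6}).
\]
The remainder, taken with the periodic \(B_6\), is \(O(n^{-6}\int_0^1|g_1^{(6)}|)\), which is \(O(n^{-6})\).

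The key structural point is that only odd powers \(n^{-1},n^{-3},n^{-5}\) occur, because the odd Bernoulli numbers vanish and there are no cut contributions. Likewise \(H_{n-1}=\log n+\gamma-\frac1{2n}-\sum_{r\ge1}\frac{B_{2r}}{2r\,n^{2r}}\) contains only even powers of \(1/n\) beyond the first two terms. Hence \(\frac n\pi H_{n-1}\) contributes only \(n\log n\), \(n\), constants and odd powers \(n^{-1},n^{-3},n^{-5}\).

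\textbf{Step 4: conclusion.} The terms through \(n^{-3}\) are exactly those already computed in the proof of Theorem~\ref{thm:fixed-residue-third-order}. They reproduce \(\frac n\pi\Kfour(n,1)\), in agreement with \(\Ccoef_2(1;n)=0\). What remains is therefore \(c\,n^{-5}+O(n^{-6})\) for some constant \(c\), so
\[
A(n,1)=\frac n\pi\Kfour(n,1)+O(n^{-5}).
\]
Multiplying by \(\pi/n\) gives \(\rho_3(n,1)=O(n^{-6})\).

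The main obstacle is ensuring that no \(n^{-4}\) term appears in the expansion of \(A(n,1)\). This rests on two facts: the absence of cut contributions when \(a=1\), and the fact that every endpoint contribution comes with an even Bernoulli number. Step 3 should state both explicitly.
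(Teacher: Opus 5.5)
Your proposal is correct and is essentially the paper's argument: the empty interior sums give $K_4(n,1)$, and for $a=1$ there are no cut terms, so the Euler--Maclaurin expansion of $\sum_{k}g_1(k/n)$ and the expansion of $\frac{n}{\pi}H_{n-1}$ contain no $n^{-4}$ term, whence $A(n,1)=\frac{n}{\pi}K_4(n,1)+O(n^{-5})$ (your appeal to $B_{2r+1}=0$ is a more explicit version of the paper's one-line remark about reflection symmetry). One small slip, which does not affect the conclusion: the remainder written with the periodic $B_6$ is, by a crude bound, only $O(n^{-5})$, and an $O(n^{-6})$ remainder needs the $\widetilde{B}_7$-form involving $g_1^{(7)}$ (available since $g_1$ is analytic on $[0,1]$), but $O(n^{-5})$ in $A(n,1)$ is all the corollary requires.
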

	
	\begin{proof}
		All interior sums in \eqref{eq:C0}--\eqref{eq:C3} are empty.
		Reflection symmetry of the two endpoint expansions makes the next even
		inverse power vanish, so the remainder in \(A(n,1)\) improves to
		\(O(n^{-5})\).  Multiplication by \(\pi/n\) gives the assertion.
	\end{proof}
	
	The identity \(A(n,1)=c_0(1/n)\) places this corollary in the classical
	one-parameter asymptotic problem for Vasyunin--Estermann cotangent sums.
	Earlier investigations include Vasyunin \cite{Vasyunin1996}, Rassias
	\cite{Rassias2014}, and Maier--Rassias \cite{MaierRassias2016}.  The
	coefficientwise fixed-residue expansion of
	Theorem~\ref{thm:fixed-residue-third-order} is the form needed below.
	
	The canonical model uses \eqref{eq:K4} globally as a definition.
	Estimate \eqref{eq:rho-fixed-estimate} is used only on fixed edges; no
	small-error assertion is made in the bulk \(a\asymp n\).
	
	\begin{lemma}[The \(\Ccoef_0\)-identity]\label{lem:C0-identity}
		For \((a,n)=1\),
		\begin{equation}\label{eq:C0-identity}
			\Ccoef_0(a;n)=a-\pi A(a,n).
		\end{equation}
	\end{lemma}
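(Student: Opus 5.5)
The identity follows directly by comparing the two definitions; no asymptotic input is needed. We will unfold \eqref{eq:def-A} for the pair \((a,n)\) and match it term by term with the sum in \eqref{eq:C0}.

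First, the case \(a=1\). The sum in \eqref{eq:C0} is empty, so \(\Ccoef_0(1;n)=1\). By the remark after \eqref{eq:def-A}, \(A(1,n)=0\) because \(N=1\). Both sides of \eqref{eq:C0-identity} therefore equal \(1\).

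Now let \(a\ge2\) with \((a,n)=1\). In \eqref{eq:def-A}, applied to the ordered pair \((a,n)\), we have \(g=(a,n)=1\), \(N=a\) and \(M=n\). Hence
\[
A(a,n)=\sum_{k=1}^{a-1}\left(\frac12-\left\{\frac{kn}{a}\right\}\right)\cot\left(\frac{\pi k}{a}\right).
\]
With the notation \(\theta_k=\{kn/a\}\) and \(u_k=\pi k/a\), and using \(B_1(t)=t-\tfrac12\), each summand equals \(-B_1(\theta_k)\cot u_k\). This gives
\[
\pi A(a,n)=-\pi\sum_{j=1}^{a-1}B_1(\theta_j)\cot u_j .
\]
Substituting this into \eqref{eq:C0} yields \(\Ccoef_0(a;n)=a-\pi A(a,n)\), which is \eqref{eq:C0-identity}.

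There is no real obstacle. The only point to check is that the roles of the arguments line up: the cut points \(j/a\) and the fractional parts \(\{jn/a\}\) in \eqref{eq:C0} are exactly the data of the transposed cotangent sum \(A(a,n)\), and not of \(A(n,a)\). This is the source of the reciprocity-type coupling between an edge \((n,a)\) and its transpose.

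Coprimality is used only to get \(g=1\), so that no rescaling by \((a,n)\) enters. The convention at the jump points of \(\{\cdot\}\) is irrelevant, since \(\theta_j\neq0\) for \(1\le j<a\).
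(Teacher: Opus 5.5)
Your proof is correct and is essentially the paper's argument: unfold \eqref{eq:def-A} for the ordered pair $(a,n)$ with $g=1$, $N=a$, $M=n$, rewrite each summand as $-B_1(\theta_j)\cot u_j$, multiply by $\pi$, and compare with \eqref{eq:C0}. Your separate check of $a=1$, where both sides equal $1$, is a harmless addition that the paper leaves implicit.
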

	
	\begin{proof}
		By \eqref{eq:def-A},
		\[
		A(a,n)=\sum_{j=1}^{a-1}
		\left(\frac12-\left\{\frac{jn}{a}\right\}\right)
		\cot\left(\frac{\pi j}{a}\right)
		=-\sum_{j=1}^{a-1}B_1(\theta_j)\cot u_j.
		\]
		Multiplication by \(\pi\) and comparison with \eqref{eq:C0} prove the
		claim.
	\end{proof}
	
	\begin{lemma}[Uniform algebraic size]\label{lem:K-size}
		Uniformly for \(1\le a\le n/2\) and \((a,n)=1\),
		\begin{align*}
			|\Ccoef_0(a;n)|&\ll a\log(2a),&
			|\Ccoef_1(a;n)|&\ll a^2,\\
			|\Ccoef_2(a;n)|&\ll a^3,&
			|\Ccoef_3(a;n)|&\ll a^4.
		\end{align*}
		In particular,
		\begin{equation}\label{eq:K-size}
			\Kfour(n,a)\ll 1+\log n.
		\end{equation}
		All implied constants are absolute.
	\end{lemma}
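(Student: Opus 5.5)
The plan is to bound each coefficient $\Ccoef_\nu(a;n)$ directly from its definition \eqref{eq:C0}--\eqref{eq:C3}, using only $|B_r(\theta)|\le 1$ on $[0,1]$ together with standard estimates for cotangent and cosecant sums at the points $u_j=\pi j/a$. For $1\le j\le a-1$ we have $\sin u_j\ge \tfrac{2}{\pi}\min(u_j,\pi-u_j)=2\min(j,a-j)/a$. Hence
\[
|\cot u_j|\le |\csc u_j|\le \frac{a}{2\min(j,a-j)} .
\]
Pairing $j$ with $a-j$, each sum reduces to twice a sum over $1\le j\le a/2$. The constants $\pi^2/36$ and $-\pi^4/5400$ are harmless since $a\ge1$.

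\textbf{The bound for $\Ccoef_0$.} We have
\[
|\Ccoef_0(a;n)|\le a+\pi\sum_{j}|\cot u_j| \le a+\pi\sum_{j\le a/2}\frac{a}{j}\ll a\log(2a).
\]
Alternatively, one may invoke Lemma~\ref{lem:C0-identity} and bound $A(a,n)$ directly; the direct bound is simpler and already suffices.

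\textbf{The bounds for $\Ccoef_1,\Ccoef_2,\Ccoef_3$.} For these the summands are dominated by $|\csc u_j|^{\nu+1}$, up to the factor $3$ for $2\csc^2\cot^2+\csc^4$. Since $\sum_{j\le a/2}(a/j)^{s}\ll a^{s}$ for $s\ge2$, we obtain
\[
|\Ccoef_1|\ll a^2,\qquad |\Ccoef_2|\ll a^3,\qquad |\Ccoef_3|\ll a^4 .
\]
The constants involved are absolute, because $\zeta(s)$ for $s=2,3,4$ bounds the tails.

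\textbf{The bound \eqref{eq:K-size}.} Insert the coefficient bounds into \eqref{eq:K4}. Since $a\le n/2$, we have $|\Ccoef_0|/n\ll (a/n)\log(2a)\ll \log n$. Likewise $|\Ccoef_\nu|/n^{\nu+1}\ll (a/n)^{\nu+1}\le 1$ for $\nu\ge1$. Finally, $|\log n-\log a+\cstar|\le \log n+O(1)$ because $a\ge1$. Summing gives $\Kfour(n,a)\ll 1+\log n$.

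There is no real obstacle here. The only point needing care is the lower bound $\sin u\ge (2/\pi)u$ on $[0,\pi/2]$, which gives uniformity in $a$ and produces the single logarithm for $\nu=0$. Note also that the hypothesis $(a,n)=1$ is needed only so that the $\theta_j$ are defined as in the setup; the bounds themselves use nothing about $\theta_j$ beyond $B_r(\theta_j)=O(1)$.
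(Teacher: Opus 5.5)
Your proof is correct and is essentially the paper's own argument: bound the Bernoulli factors by absolute constants, use $|\cot u_j|\le|\csc u_j|\ll a/\min(j,a-j)$ to obtain the majorants $a\sum_{j\le a/2}1/j$ and $a^{s}\sum_{j}j^{-s}$ for $s=2,3,4$, then divide by the powers of $n$ using $a\le n/2$. Your explicit use of Jordan's inequality and the check $|\log n-\log a+c_*|\le\log n+O(1)$ simply fill in details the paper leaves implicit.
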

	
	\begin{proof}
		The Bernoulli polynomials are bounded on \([0,1]\).  For
		\(1\le j<a\), put \(j_*=\min(j,a-j)\).  The elementary inequalities
		\[
		|\csc(\pi j/a)|\ll \frac a{j_*},\qquad
		|\cot(\pi j/a)|\ll \frac a{j_*}
		\]
		give, respectively, the majorants
		\[
		a\sum_{j\le a/2}\frac1j,\qquad
		a^2\sum_{j\ge1}\frac1{j^2},\qquad
		a^3\sum_{j\ge1}\frac1{j^3},\qquad
		a^4\sum_{j\ge1}\frac1{j^4}.
		\]
		These prove the four coefficient bounds.  Dividing by the indicated powers
		of \(n\), using \(a\le n/2\), and adding the logarithmic terms proves
		\eqref{eq:K-size}.
	\end{proof}
	
	\begin{rem}[Origin of the coefficients]
		For fixed \(H\), the usual shifted Euler--Maclaurin calculation gives
		\[
		\pi\frac{A(n,a)}n
		=\log\frac na+\cstar
		+\sum_{\nu=0}^{3}\frac{\Ccoef_\nu(a;n)}{n^{\nu+1}}
		+O_H(n^{-5}),\qquad 1\le a\le H.
		\]
		The phase \(\theta_j=\{jn/a\}\) is the displacement of the interior cut
		\(jn/a\) from the nearest integer.  At order \(n^{-\nu-1}\), the interior
		cut contributes
		\[
		\pi\frac{(-\pi)^\nu}{(\nu+1)!}
		B_{\nu+1}(\theta_j)
		\left.\frac{\dd^\nu}{\dd u^\nu}\cot u\right|_{u=\pi j/a},
		\]
		while the two endpoint singularities give the constants displayed in
		\eqref{eq:C0}--\eqref{eq:C3}.  This explains the truncation, but the
		subsequent arguments require only the explicit formulas above.
	\end{rem}

	\section{The exact core--edge--bulk decomposition}
	
	The distinction between small least residues and residues comparable with
	the modulus parallels, at a bookkeeping level, the fixed-parameter and
	distributional regimes studied for cotangent sums in
	\cite{MaierRassias2016,Bettin2015}.  The four-sector partition below is
	specific to the present argument and is an exact partition, rather than an
	asymptotic division into ranges.
	
	Fix an arbitrary integer \(H\ge1\), and partition the ordered primitive pairs
	with first coordinate at least two into
	\begin{align}
		\mathcal Q_H^\core
		&:=\{(n,m):2\le n\le2H\},\label{eq:core-set}\\
		\mathcal Q_H^+
		&:=\{(n,m):n>2H,\ 1\le r_n(m)\le H\},\label{eq:plus-set}\\
		\mathcal Q_H^-
		&:=\{(n,m):n>2H,\ 1\le n-r_n(m)\le H\},\label{eq:minus-set}\\
		\mathcal Q_H^\bulk
		&:=\{(n,m):n>2H,\ H<r_n(m)<n-H\}.
		\label{eq:bulk-set}
	\end{align}
	Only coprime pairs will be summed, although this condition is suppressed in
	the set notation.  The four sets are disjoint and exhaustive.
	For \(d\ge1\), define the admissible primitive pairs
	\begin{equation}\label{eq:Ad}
		\mathcal A_d:=
		\{(n,m)\in\N^2:n\ge2,\ (n,m)=1,\ (d,nm)=1,\ dn,dm\ge2\}.
	\end{equation}
	At a fixed gcd layer \(d\), define the truncated sector
	\begin{equation}\label{eq:d-sector}
		\Sigma_{d,[3],H}^{\diamond}(y)
		:=\sum_{(n,m)\in\mathcal A_d\cap\mathcal Q_H^\diamond}
		\eps_{n,m}\frac{\mu(n)\mu(m)}m
		\Kfour(n,a_{n,m})y^{n+m},
	\end{equation}
	where $\diamond \in \left\lbrace  core, +, -, bulk \right\rbrace$. The factor \(1/n\) from the original cotangent series has been absorbed
	into \(\pi A(n,m)/n\), and then replaced according to
	\eqref{eq:A-K-rho} with \(\rho_3=0\).
	Define the corresponding total layer and its gcd sum by
	\begin{align}
		\Sigma_{d,[3]}(y)
		&:=
		\sum_{(n,m)\in\mathcal A_d}
		\eps_{n,m}\frac{\mu(n)\mu(m)}m
		\Kfour(n,a_{n,m})y^{n+m},
		\label{eq:d-total-explicit}\\
		\Sigma_{[3]}(x)
		&:=
		\sum_{d\ge1}\frac{\mu(d)^2}{d}
		\Sigma_{d,[3]}(x^d).
		\label{eq:global-total-explicit}
	\end{align}
	
	\begin{prop}[Exact sector decomposition]\label{prop:sector-exact}
		For \(|y|<1\),
		\begin{equation}\label{eq:d-sector-sum}
			\Sigma_{d,[3]}(y)
			=\Sigma_{d,[3],H}^{\core}(y)
			+\Sigma_{d,[3],H}^{+}(y)
			+\Sigma_{d,[3],H}^{-}(y)
			+\Sigma_{d,[3],H}^{\bulk}(y),
		\end{equation}
		with the total on the left defined by \eqref{eq:d-total-explicit}.
		After summing the gcd variable, with \(W_{n,m}\) as defined in \eqref{eq:Wnm} below,
		\begin{align}
			\Sigma_{[3],H}^{\diamond}(x)
			&:=\sum_{d\ge1}\frac{\mu(d)^2}{d}
			\Sigma_{d,[3],H}^{\diamond}(x^d)\notag\\
			&=\sum_{\substack{(n,m)\in\mathcal Q_H^\diamond\\(n,m)=1}}
			\eps_{n,m}\frac{\mu(n)\mu(m)}m
			\Kfour(n,a_{n,m})W_{n,m}(x),
			\label{eq:primitive-sector}
		\end{align}
		and
		\begin{equation}\label{eq:all-sectors}
			\Sigma_{[3]}(x)
			=\Sigma_{[3],H}^{\core}(x)
			+\Sigma_{[3],H}^{+}(x)
			+\Sigma_{[3],H}^{-}(x)
			+\Sigma_{[3],H}^{\bulk}(x).
		\end{equation}
		The total \(\Sigma_{[3]}\) is independent of \(H\).
	\end{prop}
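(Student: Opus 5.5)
The plan is to prove \eqref{eq:d-sector-sum} by regrouping an absolutely convergent series, to prove \eqref{eq:primitive-sector} by interchanging the $d$-sum with the $(n,m)$-sum, and to read off \eqref{eq:all-sectors} and the $H$-independence from these two facts. I take the definition \eqref{eq:Wnm} of $W_{n,m}$ to be the one forced by \eqref{eq:Ad}, namely
\[
W_{n,m}(x)=\sum_{\substack{d\ge1,\ (d,nm)=1\\ dn\ge2,\ dm\ge2}}\frac{\mu(d)^2}{d}\,x^{d(n+m)}.
\]
This is the weight produced by writing an original pair as $(dn,dm)$ with $(n,m)=1$. The factor $\mu(dn)\mu(dm)$ equals $\mu(d)^2\mu(n)\mu(m)$ when $(d,nm)=1$ and vanishes otherwise. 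If the paper's definition differs, only this bookkeeping step changes.

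\textbf{Step 1: absolute convergence of each layer.} Fix $|y|<1$. For every admissible pair, $a_{n,m}\le n/2$, so Lemma~\ref{lem:K-size} gives $|\Kfour(n,a_{n,m})|\ll 1+\log n$ with an absolute constant. Since $|\eps_{n,m}\mu(n)\mu(m)/m|\le1$, the series \eqref{eq:d-total-explicit} is dominated by
\[
\sum_{n,m\ge1}(1+\log n)\,|y|^{n+m}<\infty .
\]
The four sets \eqref{eq:core-set}--\eqref{eq:bulk-set} are pairwise disjoint and cover all coprime pairs with $n\ge2$. Indeed, either $n\le 2H$, or $n>2H$ and $r_n(m)$ lies in exactly one of $[1,H]$, $[n-H,n-1]$, $(H,n-H)$; these three ranges are disjoint because $n>2H$. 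An absolutely convergent series may be split along any partition of its index set, which gives \eqref{eq:d-sector-sum}. The sets depend only on $(n,m)$ and not on $d$, so this partition is the same in every gcd layer.

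\textbf{Step 2: summing over the gcd variable.} This is the only step needing care, because the $d$-sum must be justified uniformly. For $|x|<1$ I will bound the double sum over $d$ and $(n,m)$ by
\[
\sum_{d\ge1}\frac1d\sum_{n,m\ge1}(1+\log n)\,|x|^{d(n+m)}
\;\le\;\sum_{d\ge1}\frac{C\,|x|^{2d}}{(1-|x|^d)^{3}}
\;\le\;\frac{C}{(1-|x|)^3}\sum_{d\ge1}|x|^{2d}<\infty .
\]
Here the middle inequality uses $1+\log n\le n$ and $\sum_n n r^n\le r/(1-r)^2$ with $r=|x|^d$, and the last uses $1-|x|^d\ge 1-|x|$. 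Fubini then allows the $d$-sum to be taken inside, separately on each sector $\mathcal Q_H^\diamond$. For a fixed coprime pair $(n,m)$, the summand $\eps_{n,m}\mu(n)\mu(m)m^{-1}\Kfour(n,a_{n,m})$ does not depend on $d$. Collecting the coefficients $\mu(d)^2 d^{-1}x^{d(n+m)}$ over $d$ with $(d,nm)=1$ and $dn,dm\ge2$ gives exactly $W_{n,m}(x)$, which proves \eqref{eq:primitive-sector}.

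\textbf{Step 3: conclusion.} Summing \eqref{eq:d-sector-sum} against $\mu(d)^2/d$, which is legitimate by the same domination, gives \eqref{eq:all-sectors}. The left side $\Sigma_{[3]}$ is defined by \eqref{eq:d-total-explicit} and \eqref{eq:global-total-explicit} without any reference to $H$, so it is independent of $H$. The point to stress is that $T_{[3]}$ is applied to each ordered pair before any core--edge pairing. Consequently no term migrates between $(n,a)$ and $(a,n)$, and the sector splitting is a pure regrouping. I expect the uniform domination in Step~2 to be the only non-formal point, and it rests entirely on the absolute bound \eqref{eq:K-size}.
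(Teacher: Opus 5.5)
Your proof is correct and follows essentially the same route as the paper. The exact set partition gives \eqref{eq:d-sector-sum}, absolute convergence from Lemma~\ref{lem:K-size} and the exponential weights justifies interchanging the $d$, $n$, $m$ sums, and collecting the $d$-sum recovers $W_{n,m}$ exactly as defined in \eqref{eq:Wnm}, which matches the definition you assumed; your explicit majorant $\sum_{d}|x|^{2d}(1-|x|)^{-3}$ spells out the domination that the paper states in one line.
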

	
	\begin{proof}
		The set partition gives \eqref{eq:d-sector-sum}.  Absolute convergence for
		\(|x|<1\) follows from \ref{lem:K-size} and the exponential weights, so
		the \(d,n,m\) sums may be interchanged.  Recognizing \eqref{eq:Wnm} gives
		\eqref{eq:primitive-sector}; summing the four sectors gives
		\eqref{eq:all-sectors}.
	\end{proof}
	
	The edges can also be written as explicit arithmetic progressions.  For
	\(1\le a\le H\), one has
	\begin{align}
		\Sigma_{[3],H}^{+}(x)
		&=\sum_{a=1}^H\sum_{\substack{n>2H\\(n,a)=1}}
		\mu(n)\Kfour(n,a)
		\sum_{q\ge0}\frac{\mu(qn+a)}{qn+a}W_{n,qn+a}(x),
		\label{eq:plus-progression}\\
		\Sigma_{[3],H}^{-}(x)
		&=-\sum_{a=1}^H\sum_{\substack{n>2H\\(n,a)=1}}
		\mu(n)\Kfour(n,a)
		\sum_{q\ge1}\frac{\mu(qn-a)}{qn-a}W_{n,qn-a}(x).
		\label{eq:minus-progression}
	\end{align}
	The \(q=0\) term in \eqref{eq:plus-progression} is the positive initial
	edge.
	
	\subsection{The exact
		\texorpdfstring{\(H\)-to-\(H+1\)}{H-to-H+1} transfer}
	
	It is useful to record explicitly why changing \(H\) cannot change the
	total.  Let
	\[
	\Omega(n,m;x):=
	\eps_{n,m}\frac{\mu(n)\mu(m)}m
	\Kfour(n,a_{n,m})W_{n,m}(x)
	\]
	for primitive \((n,m)\) with \(n\ge2\).  Define the two rows that move
	from the old edges to the new core by
	\begin{align*}
		\mathscr R_H^+(x)
		&:=\sum_{\substack{n\in\{2H+1,2H+2\}\\m\ge1,\ (n,m)=1\\1\le r_n(m)\le H}}
		\Omega(n,m;x),\\
		\mathscr R_H^-(x)
		&:=\sum_{\substack{n\in\{2H+1,2H+2\}\\m\ge1,\ (n,m)=1\\1\le n-r_n(m)\le H}}
		\Omega(n,m;x),
	\end{align*}
	and the two new edge layers by
	\begin{align*}
		\mathscr L_{H+1}^+(x)
		&:=\sum_{\substack{n>2H+2,\ m\ge1\\(n,m)=1\\r_n(m)=H+1}}
		\Omega(n,m;x),\\
		\mathscr L_{H+1}^-(x)
		&:=\sum_{\substack{n>2H+2,\ m\ge1\\(n,m)=1\\n-r_n(m)=H+1}}
		\Omega(n,m;x).
	\end{align*}
	
	\begin{prop}[Transfer identities]\label{prop:H-transfer}
		For every \(|x|<1\),
		\begin{align}
			\Sigma_{[3],H+1}^{\core}
			&=\Sigma_{[3],H}^{\core}
			+\mathscr R_H^++\mathscr R_H^-,\label{eq:transfer-core}\\
			\Sigma_{[3],H+1}^{+}
			&=\Sigma_{[3],H}^{+}
			-\mathscr R_H^++\mathscr L_{H+1}^+,\label{eq:transfer-plus}\\
			\Sigma_{[3],H+1}^{-}
			&=\Sigma_{[3],H}^{-}
			-\mathscr R_H^-+\mathscr L_{H+1}^-,\label{eq:transfer-minus}\\
			\Sigma_{[3],H+1}^{\bulk}
			&=\Sigma_{[3],H}^{\bulk}
			-\mathscr L_{H+1}^+-\mathscr L_{H+1}^-.
			\label{eq:transfer-bulk}
		\end{align}
		The same identities hold at every fixed gcd layer and separately for each
		of the seven terms in \eqref{eq:seven-pieces}.
	\end{prop}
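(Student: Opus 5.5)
The transfer identities are pure bookkeeping on the index sets, so the plan is to compare the sector memberships of each primitive pair $(n,m)$ at levels $H$ and $H+1$ and then sum $\Omega(n,m;x)$ over the resulting pieces. Absolute convergence for $|x|<1$, which is the same justification as in Proposition~\ref{prop:sector-exact} (Lemma~\ref{lem:K-size} together with the exponential weights $W_{n,m}(x)$), allows arbitrary regrouping. So it suffices to establish four exact set identities among the primitive pairs:
\begin{align*}
\mathcal Q_{H+1}^{\core}&=\mathcal Q_H^{\core}\sqcup\mathcal R_H^+\sqcup\mathcal R_H^-,\\
\mathcal Q_{H+1}^{+}\sqcup\mathcal R_H^+&=\mathcal Q_H^{+}\sqcup\mathcal L_{H+1}^+,\\
\mathcal Q_{H+1}^{-}\sqcup\mathcal R_H^-&=\mathcal Q_H^{-}\sqcup\mathcal L_{H+1}^-,\\
\mathcal Q_{H+1}^{\bulk}\sqcup\mathcal L_{H+1}^+\sqcup\mathcal L_{H+1}^-&=\mathcal Q_H^{\bulk}.
\end{align*}
Here $\mathcal R_H^\pm$ and $\mathcal L_{H+1}^\pm$ denote the index sets of $\mathscr R_H^\pm$ and $\mathscr L_{H+1}^\pm$.

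The core identity is immediate from \eqref{eq:core-set}. For the plus edge, take a pair in $\mathcal Q_H^+$, so $n>2H$ and $r_n(m)\le H$. If $n\in\{2H+1,2H+2\}$, it lies in $\mathcal R_H^+$ and has moved to the new core. Otherwise $n>2H+2$, and the condition $r_n(m)\le H\le H+1$ places it in $\mathcal Q_{H+1}^+$. Conversely, a pair in $\mathcal Q_{H+1}^+$ either has $r_n(m)\le H$, and so comes from $\mathcal Q_H^+$, or has $r_n(m)=H+1$, and so lies in $\mathcal L_{H+1}^+$.

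The one point needing care is disjointness, which guarantees that no pair is counted in two sectors at once.
\begin{itemize}
\item For $n\ge 2H+3$, the conditions $r_n(m)\le H+1$ and $n-r_n(m)\le H+1$ cannot hold simultaneously, since together they would force $n\le 2H+2$.
\item For $n\in\{2H+1,2H+2\}$, the sets $\mathcal R_H^+$ and $\mathcal R_H^-$ are disjoint, because $r_n(m)\le H$ and $n-r_n(m)\le H$ together give $n\le 2H$.
\end{itemize}
The minus case is symmetric. For the bulk, a pair in $\mathcal Q_H^{\bulk}$ has $H<r_n(m)<n-H$. It leaves for $\mathcal L_{H+1}^\pm$ exactly when $r_n(m)=H+1$ or $n-r_n(m)=H+1$ with $n>2H+2$. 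It leaves for the core when $n\in\{2H+1,2H+2\}$; I expect this case to be empty or harmless. For $n=2H+1$ the bulk condition $H<r<H+1$ has no solutions. For $n=2H+2$ the only candidate is $r=H+1$, which forces $(n,m)\ge H+1>1$ unless $H=0$, so coprimality excludes it. This small-$n$ check is the step to verify carefully, since it is exactly why the core rows $\mathscr R_H^\pm$ draw only from the old edges.

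Finally, the summand $\Omega$ depends only on the pair $(n,m)$ and not on $H$: the quantities $\eps_{n,m}$, $a_{n,m}$ and $\Kfour(n,a_{n,m})$ are defined globally, which is the canonical ordered-pair feature emphasized before Definition~\ref{def:F3}. Summing $\Omega$ over the set identities therefore gives \eqref{eq:transfer-core}--\eqref{eq:transfer-bulk}.

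The same reasoning covers each fixed gcd layer, by intersecting every set with $\mathcal A_d$ and replacing $W_{n,m}(x)$ by $y^{n+m}$. It also covers each of the seven pieces \eqref{eq:seven-pieces} separately, by replacing $\Kfour$ with $K_{4,[i]}$, since the argument never uses the form of the summand. Adding the four identities gives zero net change, which recovers the $H$-independence of $\Sigma_{[3]}$.
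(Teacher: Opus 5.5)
Your proposal is correct and follows the same route as the paper's proof. Both reduce the identities to index-set bookkeeping, using two observations. First, the new core rows $n=2H+1,2H+2$ contain no primitive bulk pairs: the only candidate is $r=H+1$ in row $2H+2$, and it is not coprime. Second, for $n>2H+2$ only the layers with least absolute residue $H+1$ move from the bulk to the new edges. The summand is independent of $H$, so the argument carries over to each gcd layer and each of the seven pieces. One small quibble: the core identity is not quite ``immediate'' from the definition of the core set, since it relies on exactly that small-row check, which you do supply later in your bulk paragraph.
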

	
	\begin{proof}
		Passing from \(H\) to \(H+1\) adds the rows \(n=2H+1,2H+2\) to the
		core.  Every primitive residue in the row \(n=2H+1\) lies on one of the
		two old edges.  In the row \(n=2H+2\), the only residue not on an old edge
		would be \(H+1\), but
		\((2H+2,H+1)=H+1>1\); hence it is not primitive.  This proves the row
		transfers.  For \(n>2H+2\), the layers with least absolute residue
		\(H+1\) move from the bulk to the two new edges.  No other pair changes
		sector, proving the four identities.  Since the argument concerns only
		index sets, it applies to every gcd layer and every summand of \(\Kfour\).
		For the fixed layer \(d\), use the same argument with
		\[
		\Omega_d(n,m;y):=
		\1_{(n,m)\in\mathcal A_d}\eps_{n,m}
		\frac{\mu(n)\mu(m)}m\Kfour(n,a_{n,m})y^{n+m}
		\]
		in place of \(\Omega(n,m;x)\).
	\end{proof}
	
	Adding \eqref{eq:transfer-core}--\eqref{eq:transfer-bulk} cancels every
	transfer term.  Thus \(H\) is a bookkeeping parameter: a bound obtained
	only after all four sectors are recombined cannot depend on it.

	\section{Gcd layers and the uniform dilation kernel}
	
	The multiplicative identities used in this section are standard tools of
	analytic number theory; see, for example, \cite[Ch.~I]{Ten95}.  What is
	needed here is a parameter-uniform version that retains the dependence on
	the primitive pair through \(r=nm\).
	
	\subsection{The gcd decomposition}
	
	The original cotangent double series is
	\begin{equation}\label{eq:Sigma-exact}
		\Sigma(x):=\sum_{N,M\ge2}
		\frac{\mu(N)\mu(M)}{NM}A(N,M)x^{N+M},\qquad |x|<1.
	\end{equation}
	After the gcd is removed, its lower bounds are most conveniently retained
	in the dilation variable.  Recall \(\mathcal A_d\) from \eqref{eq:Ad} and
	put
	\begin{equation}\label{eq:Sigma-d-exact}
		\Sigma_d(y):=
		\sum_{(n,m)\in\mathcal A_d}
		\frac{\mu(n)\mu(m)}{nm}A(n,m)y^{n+m}.
	\end{equation}
	If \(n'=dn\), \(m'=dm\), and \((n,m)=1\), then
	\[
	A(dn,dm)=dA(n,m).
	\]
	Moreover, whenever \(\mu(dn)\mu(dm)\ne0\), one has
	\((d,nm)=1\) and
	\[
	\mu(dn)\mu(dm)=\mu(d)^2\mu(n)\mu(m).
	\]
	These elementary identities are the source of the squarefree dilation
	kernel.  Their organization into the present gcd layers is specific to
	this double series.
	
	\begin{prop}[Exact gcd layers]\label{prop:gcd-layers}
		For \(|x|<1\),
		\begin{equation}\label{eq:gcd-layers}
			\Sigma(x)=\sum_{d\ge1}\frac{\mu(d)^2}{d}\Sigma_d(x^d).
		\end{equation}
	\end{prop}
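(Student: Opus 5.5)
We group the terms of \eqref{eq:Sigma-exact} according to $d=(N,M)$ and write $N=dn$, $M=dm$ with $(n,m)=1$.

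The preliminary step is absolute convergence, which justifies the regrouping. For $|x|<1$ the double series \eqref{eq:Sigma-exact} converges absolutely. Indeed, the definition \eqref{eq:def-A} gives the crude bound
\[
|A(N,M)|\le (N,M)\sum_{k=1}^{N/(N,M)-1}\tfrac12\Bigl|\cot\tfrac{\pi k (N,M)}{N}\Bigr| \ll N\log(2N),
\]
using $|\cot(\pi k/N_0)|\ll N_0/\min(k,N_0-k)$. The terms are therefore dominated by $\log(2N)\,|x|^{N+M}$, which is summable. Hence any rearrangement or grouping of \eqref{eq:Sigma-exact} has the same sum.

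Next, every pair $(N,M)$ with $N,M\ge2$ is written uniquely as $(dn,dm)$, where $d=(N,M)$ and $(n,m)=1$. The constraint $N,M\ge2$ becomes $dn,dm\ge2$. By the identities recorded just before the statement, the term is nonzero only if $\mu(dn)\mu(dm)\ne0$. In that case $d$ is squarefree and $(d,nm)=1$, and $\mu(dn)\mu(dm)=\mu(d)^2\mu(n)\mu(m)$. Conversely, if $\mu(d)^2=1$ and $(d,nm)=1$, the same identity holds by multiplicativity. Whenever $\mu(d)^2\,\mathbf 1_{(d,nm)=1}=0$, both sides vanish. We also use $A(dn,dm)=dA(n,m)$, which follows from \eqref{eq:def-A} because the gcd factor $g$ scales by $d$ while $N/g$ and $M/g$ are unchanged, and $x^{N+M}=(x^d)^{n+m}$. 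Together these give
\[
\frac{\mu(N)\mu(M)}{NM}A(N,M)x^{N+M}
=\frac{\mu(d)^2}{d}\cdot\frac{\mu(n)\mu(m)}{nm}A(n,m)\,(x^d)^{n+m}.
\]

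One point needs a check. If $m=n$ with $(n,m)=1$, then $n=m=1$, and this pair is excluded by the condition $n\ge2$ in $\mathcal A_d$. More generally, $n=1$ gives $A(1,m)=0$ because the sum in \eqref{eq:def-A} is empty. So dropping the $n=1$ pairs from $\mathcal A_d$ loses nothing. Summing first over $(n,m)\in\mathcal A_d$ for fixed $d$ produces $\Sigma_d(x^d)$. Summing over $d$ then gives \eqref{eq:gcd-layers}, and absolute convergence justifies the interchange.

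The main obstacle is the convergence bound on $A$, which must hold uniformly over both indices; the estimate above supplies it. The rest is bookkeeping of the index sets against the conditions defining $\mathcal A_d$.
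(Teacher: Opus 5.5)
Your proof is correct and follows the paper's route. You write $N=dn$, $M=dm$ with $d=(N,M)$, use $A(dn,dm)=dA(n,m)$ and $\mu(dn)\mu(dm)=\mu(d)^2\mu(n)\mu(m)\mathbf 1_{(d,nm)=1}$, and regroup using absolute convergence. You also supply two details the paper only asserts: the bound $|A(N,M)|\ll N\log(2N)$ that gives absolute convergence, and the observation $A(1,m)=0$ that justifies the condition $n\ge2$ in $\mathcal A_d$.
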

	
	\begin{proof}
		Write \(N=dn\), \(M=dm\), where \(d=(N,M)\).  The two identities above
		transform the summand in \eqref{eq:Sigma-exact} into
		\[
		\frac{\mu(d)^2}{d}
		\frac{\mu(n)\mu(m)}{nm}A(n,m)x^{d(n+m)}.
		\]
		The conditions that the two M\"obius factors be nonzero are exactly those
		encoded by \(\mathcal A_d\).  Absolute convergence for \(|x|<1\)
		justifies regrouping and proves the formula.
	\end{proof}
	
	\begin{definition}[Dilation kernels]\label{def:dilation}
		For \(r\ge1\) and \(|z|<1\), put
		\begin{equation}\label{eq:Dr}
			D_r(z):=\sum_{\substack{d\ge1\\(d,r)=1}}
			\frac{\mu(d)^2}{d}z^d.
		\end{equation}
		For \(n,m\ge1\), put
		\begin{equation}\label{eq:Wnm}
			W_{n,m}(x):=
			\sum_{\substack{d\ge1\\(d,nm)=1\\dn,dm\ge2}}
			\frac{\mu(d)^2}{d}x^{d(n+m)}.
		\end{equation}
		Then
		\begin{equation}\label{eq:W-D}
			W_{n,m}(x)=D_{nm}(x^{n+m})
			-\1_{\min(n,m)=1}x^{n+m}.
		\end{equation}
		In particular, \(W_{n,m}=W_{m,n}\).
	\end{definition}
	
	The correction in \eqref{eq:W-D} is the excluded term \(d=1\); retaining
	its factor \(x^{n+m}\) is essential in exact identities.
	
	\subsection{The uniform squarefree estimate}
	
	Let \(\omega(r)\) be the number of distinct prime divisors of \(r\), and
	define
	\begin{equation}\label{eq:kappa-beta}
		\kappa(r):=\frac1{\zeta(2)}\prod_{p\mid r}\frac p{p+1},\qquad
		\beta(r):=\kappa(r)\left(
		-2\frac{\zeta'(2)}{\zeta(2)}
		+\sum_{p\mid r}\frac{\log p}{p+1}\right).
	\end{equation}
	
	\begin{thm}[Uniform squarefree counting]\label{thm:squarefree-count}
		For every \(r\ge1\) and \(X\ge1\),
		\begin{equation}\label{eq:squarefree-count}
			\sum_{\substack{d\le X\\(d,r)=1}}\mu(d)^2
			=\kappa(r)X+O\!\left(2^{\omega(r)}\sqrt X\right),
		\end{equation}
		with an absolute implied constant.
	\end{thm}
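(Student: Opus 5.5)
Write $\mu(d)^2=\sum_{e^2\mid d}\mu(e)$.  The coprimality condition $(d,r)=1$ is then handled by a second M\"obius inversion $\sum_{f\mid (d,r)}\mu(f)$.  A cleaner route is to treat the Dirichlet series directly.
\[
\sum_{(d,r)=1}\frac{\mu(d)^2}{d^s}=\prod_{p\nmid r}(1+p^{-s})
=\frac{\zeta(s)}{\zeta(2s)}\prod_{p\mid r}(1+p^{-s})^{-1}.
\]
So the indicator $\mathbf 1_{(d,r)=1}\mu(d)^2$ is the Dirichlet convolution of $\mu(d)^2$ with the multiplicative function $g_r$ defined by
\[
\sum_k g_r(k)k^{-s}=\prod_{p\mid r}(1+p^{-s})^{-1}.
\]
Concretely, $g_r$ is supported on integers all of whose prime factors divide $r$, and $g_r(p^j)=(-1)^j$.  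My plan is to convolve the classical squarefree count with $g_r$.  I will avoid $\sqrt X$-losing steps that would pick up a factor larger than $2^{\omega(r)}$.

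First I recall the unconditional estimate
\[
Q(Y):=\sum_{d\le Y}\mu(d)^2=\frac{Y}{\zeta(2)}+O(\sqrt Y),\qquad Y\ge1.
\]
It is proved via $\mu^2=\sum_{e^2\mid d}\mu(e)$: the main term is $Y\sum_{e\le\sqrt Y}\mu(e)e^{-2}$, the tail of $\sum\mu(e)/e^2$ beyond $\sqrt Y$ is $O(Y^{-1/2})$, and the rounding errors total at most $\sqrt Y$.  The bound $Q(Y)=O(Y)$ also holds trivially for $Y<1$, where $Q(Y)=0$.

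Then
\[
\sum_{\substack{d\le X\\(d,r)=1}}\mu(d)^2=\sum_{k\le X}g_r(k)\,Q(X/k).
\]
Inserting the asymptotic gives the main term
\[
\frac{X}{\zeta(2)}\sum_k \frac{g_r(k)}{k}=\frac{X}{\zeta(2)}\prod_{p\mid r}\frac{p}{p+1}=\kappa(r)X,
\]
once the tail $k>X$ is accounted for.  The tail costs
\[
\frac{X}{\zeta(2)}\sum_{k>X}\frac{|g_r(k)|}{k}.
\]

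The error terms are
\[
\sum_{k\le X}|g_r(k)|\sqrt{X/k}\;\le\;\sqrt X\sum_k |g_r(k)|k^{-1/2}=\sqrt X\prod_{p\mid r}\bigl(1-p^{-1/2}\bigr)^{-1}.
\]
This is the main obstacle: the product is not bounded by $2^{\omega(r)}$ at $p=2$, where the factor is about $3.41$.  It is, however, at most $C\cdot 2^{\omega(r)}$ with an absolute $C$, since each factor with $p\ge 11$ is below $2$ and only finitely many primes are exceptional.  For the tail I will use Rankin's trick:
\[
X\sum_{k>X}\frac{|g_r(k)|}{k}\le X\sum_k\frac{|g_r(k)|}{k}\Bigl(\frac kX\Bigr)^{1/2}=\sqrt X\prod_{p\mid r}\bigl(1-p^{-1/2}\bigr)^{-1}.
\]
This yields the same $O(2^{\omega(r)}\sqrt X)$.

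Collecting the three pieces proves \eqref{eq:squarefree-count} with an absolute implied constant.  Should the bound by $C\,2^{\omega(r)}$ in the product step prove troublesome, there is an alternative.  Replace $g_r$ by its truncation to squarefree $k$, handling higher powers via the identity $(1+p^{-s})^{-1}=(1-p^{-s})/(1-p^{-2s})$.  Then $\mu(k)$ is used on divisors of $\mathrm{rad}(r)$, of which there are exactly $2^{\omega(r)}$, and the $\zeta(2s)$-type factor is absorbed into the square-root count.
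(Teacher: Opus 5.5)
Your proof is correct, but it takes a different route from the paper's.

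\textbf{The paper's route.} It also starts from $\mu(d)^2=\sum_{q^2\mid d}\mu(q)$, but keeps the coprimality inside the substitution $d=q^2k$. It sums over $q\le\sqrt X$ with $(q,r)=1$ and counts $k\le X/q^2$ with $(k,r)=1$ by inclusion--exclusion over the $2^{\omega(r)}$ squarefree divisors of $r$. The factor $2^{\omega(r)}$ therefore appears directly as a count of terms. Completing $\sum_{(q,r)=1}\mu(q)q^{-2}$ then costs only $O(\sqrt X)$.

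\textbf{Your route.} You factor the coprimality out as a convolution with $g_r$, whose support (the integers with all prime factors dividing $r$) is infinite. You use the unrestricted squarefree count as a black box. You then control the resulting $k$-sum through $\sum_k|g_r(k)|k^{-1/2}$ together with Rankin's trick. This needs the extra elementary step $\prod_{p\mid r}(1-p^{-1/2})^{-1}\le C\,2^{\omega(r)}$, which holds. In fact the factor is already below $2$ for every $p\ge5$, so your threshold $p\ge11$ is harmless but not sharp. The constant $C=\tfrac14(1-2^{-1/2})^{-1}(1-3^{-1/2})^{-1}\approx2.02$ works.

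\textbf{Comparison.} The paper's argument is shorter and produces $2^{\omega(r)}$ with no auxiliary product estimate. Yours uses slightly more machinery, but it actually proves the sharper error $O\bigl(\sqrt X\prod_{p\mid r}(1-p^{-1/2})^{-1}\bigr)$. This is much smaller than $2^{\omega(r)}\sqrt X$ when $r$ has only large prime factors. The fallback sketched in your last paragraph is not needed.
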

	
	\begin{proof}
		This is the classical elementary squarefree-counting argument based on
		the square-divisor identity below; compare \cite{Ten95}.  We retain the
		dependence on \(r\) explicitly.
		Use
		\[
		\mu(d)^2=\sum_{q^2\mid d}\mu(q).
		\]
		Writing \(d=q^2k\) gives
		\[
		\sum_{\substack{d\le X\\(d,r)=1}}\mu(d)^2
		=\sum_{\substack{q\le\sqrt X\\(q,r)=1}}\mu(q)
		\sum_{\substack{k\le X/q^2\\(k,r)=1}}1.
		\]
		Inclusion--exclusion over the prime divisors of \(r\) yields, uniformly in
		\(T\ge0\),
		\[
		\sum_{\substack{k\le T\\(k,r)=1}}1
		=\frac{\varphi(r)}rT+O\!\left(2^{\omega(r)}\right).
		\]
		It follows that the left side of \eqref{eq:squarefree-count} equals
		\[
		\frac{\varphi(r)}rX
		\sum_{\substack{q\le\sqrt X\\(q,r)=1}}\frac{\mu(q)}{q^2}
		+O\!\left(2^{\omega(r)}\sqrt X\right).
		\]
		Extending the \(q\)-sum to infinity costs \(O(\sqrt X)\), and
		\[
		\frac{\varphi(r)}r
		\sum_{\substack{q\ge1\\(q,r)=1}}\frac{\mu(q)}{q^2}
		=\prod_{p\mid r}\left(1-\frac1p\right)
		\prod_{p\nmid r}\left(1-\frac1{p^2}\right)
		=\kappa(r).
		\]
		This proves the theorem.
	\end{proof}
	
	\begin{thm}[Uniform dilation formula]\label{thm:uniform-D}
		Uniformly for \(r\ge1\) and \(0<y\le1\),
		\begin{equation}\label{eq:uniform-D}
			D_r(e^{-y})
			=\kappa(r)\log\frac1y+\beta(r)
			+O\!\left(2^{\omega(r)}\sqrt y+y\right).
		\end{equation}
		For \(y\ge1\),
		\begin{equation}\label{eq:D-large-y}
			D_r(e^{-y})\ll e^{-y}
		\end{equation}
		uniformly in \(r\).
	\end{thm}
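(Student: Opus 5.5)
Write $S_r(X):=\sum_{d\le X,(d,r)=1}\mu(d)^2$. The plan is to turn Theorem~\ref{thm:squarefree-count} into the dilation formula by Abel summation against the weight $e^{-yt}/t$. The large-$y$ bound \eqref{eq:D-large-y} is immediate: since $\mu(d)^2/d\le1$ and $e^{-yd}\le e^{-y}e^{-(d-1)}$ for $y\ge1$, we get $D_r(e^{-y})\le e^{-y}\sum_{d\ge1}e^{-(d-1)}\ll e^{-y}$, uniformly in $r$. So the work lies in the range $0<y\le1$.

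By partial summation (boundary terms vanish, since $S_r(t)=0$ for $t<1$ and $e^{-yt}$ decays),
\[
D_r(e^{-y})=\int_1^\infty S_r(t)\,\frac{e^{-yt}(1+yt)}{t^2}\,\dd t ,
\]
because $-\frac{\dd}{\dd t}\bigl(e^{-yt}/t\bigr)=e^{-yt}(1+yt)/t^2$. Insert $S_r(t)=\kappa(r)t+E_r(t)$ with $E_r(t)\ll 2^{\omega(r)}\sqrt t$.

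\textbf{Main term.} This is $\kappa(r)\int_1^\infty e^{-yt}(1+yt)t^{-1}\dd t=\kappa(r)\bigl(E_1(y)+e^{-y}\bigr)$. Using $E_1(y)=-\gamma-\log y+O(y)$ and $e^{-y}=1+O(y)$, it equals $\kappa(r)\bigl(\log\frac1y+1-\gamma\bigr)+O(y)$, with $\kappa(r)\le 1$.

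\textbf{Error term.} This is the delicate step, since the crude bound $\int_1^\infty 2^{\omega(r)}t^{-3/2}\,\dd t$ only gives $O(2^{\omega(r)})$, not $O(2^{\omega(r)}\sqrt y)$. I would write
\[
\int_1^\infty E_r(t)e^{-yt}(1+yt)t^{-2}\,\dd t=\int_1^\infty E_r(t)t^{-2}\,\dd t-\int_1^\infty E_r(t)\bigl(1-e^{-yt}(1+yt)\bigr)t^{-2}\,\dd t .
\]
The first integral converges absolutely to a constant $c(r)$. For the second, use $0\le 1-e^{-u}(1+u)\ll\min(u^2,1)$ with $u=yt$; splitting at $t=1/y$ gives
\[
2^{\omega(r)}\Bigl(\int_1^{1/y}y^2t^{1/2}\,\dd t+\int_{1/y}^\infty t^{-3/2}\,\dd t\Bigr)\ll 2^{\omega(r)}\sqrt y .
\]
Hence
\[
D_r(e^{-y})=\kappa(r)\log\frac1y+\bigl[\kappa(r)(1-\gamma)+c(r)\bigr]+O\bigl(2^{\omega(r)}\sqrt y+y\bigr).
\]

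\textbf{Identifying the constant.} It remains to show $\kappa(r)(1-\gamma)+c(r)=\beta(r)$. The constant does not depend on $y$, so it can be found from the Dirichlet series $Z_r(s)=\sum_{(d,r)=1}\mu(d)^2d^{-s}=\frac{\zeta(s)}{\zeta(2s)}\prod_{p\mid r}(1+p^{-s})^{-1}$. By Mellin inversion,
\[
D_r(e^{-y})=\frac1{2\pi i}\int_{(2)}\Gamma(w)\,Z_r(1+w)\,y^{-w}\,\dd w ,
\]
and the double pole at $w=0$ produces the main terms. The residue is $\kappa(r)\log\frac1y$ plus $\kappa(r)$ times the sum of $-\gamma$ (from $\Gamma$), $+\gamma$ (from the Laurent constant of $\zeta(1+w)$), and $\frac{\dd}{\dd w}\log$ of the regular factor at $w=0$. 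That derivative equals $-2\zeta'(2)/\zeta(2)+\sum_{p\mid r}\frac{\log p}{p+1}$, which is exactly $\beta(r)/\kappa(r)$.

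To avoid proving the contour shift uniformly in $r$, I would identify the constant only through $y\to0^+$, with $r$ fixed. The Abel-summation formula above holds for each fixed $r$ with an $O_r(\sqrt y)$ error. Comparing it with the Mellin expansion, whose error is $O_r(y^{1/2-\eps})$ after shifting to $\mathrm{Re}\,w=-1/2+\eps$ (justified by the decay of $\Gamma$), shows that the two constants agree. The main obstacle is keeping track of this constant: checking that the $\gamma$-contributions cancel as claimed and that the $(p+1)$-factor derivatives combine into the stated form of $\beta(r)$.
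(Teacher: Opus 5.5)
Your argument is correct, but it is organized differently from the paper's.

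\textbf{The paper's route.} It works in two stages. First, partial summation gives
$\sum_{d\le X}\mu(d)^2\mathbf 1_{(d,r)=1}/d=\kappa(r)\log X+c_r+O(2^{\omega(r)}X^{-1/2})$ with $c_r=\kappa(r)+\int_1^\infty E_r(t)t^{-2}\dd t$. This $c_r$ is matched with the constant Laurent coefficient at $s=0$ of $\zeta(1+s)\zeta(2+2s)^{-1}\prod_{p\mid r}(1+p^{-1-s})^{-1}$. Only then is the exponential weight applied, via $D_r(e^{-y})=y\int_1^\infty e^{-yt}B_r(t)\dd t$. There the $t^{-1/2}$ error integrates directly to $O(2^{\omega(r)}\sqrt y)$, and $c_r(e^{-y}-1)$ is absorbed using $|c_r|\ll2^{\omega(r)}$.

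\textbf{Your route to the asymptotic.} You apply the exponential weight immediately, by Abel summation against $e^{-yt}/t$. You recover the square-root saving by subtracting the $y=0$ kernel and using $1-e^{-u}(1+u)\ll\min(u^2,1)$. Your constant $\kappa(r)(1-\gamma)+\int_1^\infty E_r(t)t^{-2}\dd t$ is exactly the paper's $c_r-\gamma\kappa(r)$. Because it is never multiplied by $e^{-y}$, you do not need an a priori bound on it.

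\textbf{Identifying the constant.} This is where the two proofs really diverge. Your Mellin--Barnes argument is sound: for $-1/2+\eps\le\Re w\le 2$ neither $\zeta(2+2w)$ nor $1+p^{-1-w}$ vanishes, and the decay of $\Gamma$ absorbs the polynomial growth of $\zeta(1+w)$. It also shows the cancellation of $\gamma$ as the cancellation between the Laurent constants of $\Gamma(w)$ and $\zeta(1+w)$. However, it invokes the continuation and growth of $\zeta$ inside the critical strip, which is more than necessary.

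A contour shift can be avoided entirely. As $s\downarrow0$,
$Z_r(1+s)=(1+s)\int_1^\infty S_r(t)t^{-2-s}\dd t=\kappa(r)/s+\kappa(r)+\int_1^\infty E_r(t)t^{-2}\dd t+o(1)$.
Comparing this with $Z_r(1+s)=\kappa(r)/s+\gamma\kappa(r)+\beta(r)+O_r(s)$ identifies your constant with $\beta(r)$ directly. This is in substance the paper's identification step.
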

	
	\begin{proof}
		Set
		\[
		a_r(d):=\mu(d)^2\1_{(d,r)=1},\qquad
		A_r(X):=\sum_{d\le X}a_r(d).
		\]
		By \ref{thm:squarefree-count},
		\[
		E_r(X):=A_r(X)-\kappa(r)X
		\ll 2^{\omega(r)}\sqrt X.
		\]
		Partial summation in this form is standard; see \cite[Ch.~I]{Ten95}.
		It gives
		\begin{equation}\label{eq:harmonic-squarefree}
			\sum_{d\le X}\frac{a_r(d)}d
			=\kappa(r)\log X+c_r
			+O\!\left(2^{\omega(r)}X^{-1/2}\right)
		\end{equation}
		for a constant \(c_r\).
		More precisely,
		\[
		c_r=\kappa(r)+\int_1^\infty\frac{E_r(t)}{t^2}\,\dd t,
		\]
		so \ref{thm:squarefree-count} also gives the uniform bound
		\begin{equation}\label{eq:cr-bound}
			|c_r|\ll2^{\omega(r)}.
		\end{equation}
		
		To identify it, consider, initially for \(\Re s>0\),
		\[
		Q_r(s):=\sum_{d\ge1}\frac{a_r(d)}{d^{1+s}}
		=\frac{\zeta(1+s)}{\zeta(2+2s)}
		\prod_{p\mid r}(1+p^{-1-s})^{-1}.
		\]
		This is the usual Euler-product factorization for a Dirichlet series
		supported on squarefree integers; compare \cite{Ten95}.
		Its Laurent expansion at \(s=0\) is
		\[
		Q_r(s)=\frac{\kappa(r)}s
		+\kappa(r)\left(
		\gamma-2\frac{\zeta'(2)}{\zeta(2)}
		+\sum_{p\mid r}\frac{\log p}{p+1}\right)+O_r(s).
		\]
		On the other hand, partial summation of \eqref{eq:harmonic-squarefree}
		shows that the constant Laurent coefficient is \(c_r\).  Hence
		\begin{equation}\label{eq:cr}
			c_r=\kappa(r)\left(
			\gamma-2\frac{\zeta'(2)}{\zeta(2)}
			+\sum_{p\mid r}\frac{\log p}{p+1}\right).
		\end{equation}
		
		Let \(B_r(t)=\sum_{d\le t}a_r(d)/d\).  Stieltjes integration gives
		\[
		D_r(e^{-y})=y\int_1^\infty e^{-yt}B_r(t)\,\dd t.
		\]
		The two elementary integrals
		\[
		y\int_1^\infty e^{-yt}\log t\,\dd t
		=\int_1^\infty\frac{e^{-yt}}t\,\dd t
		=\log\frac1y-\gamma+O(y)
		\]
		and
		\[
		y\int_1^\infty e^{-yt}\,\dd t=e^{-y}=1+O(y)
		\]
		are standard exponential-integral asymptotics; see
		\cite[\S\S~6.2(i), 6.6]{NIST2010}.  They show that the
		\(\kappa(r)\gamma\) in \(c_r\) cancels the
		\(-\kappa(r)\gamma\) created by exponential smoothing.  The error in
		\eqref{eq:harmonic-squarefree} contributes
		\[
		\ll 2^{\omega(r)}y\int_1^\infty e^{-yt}t^{-1/2}\,\dd t
		\ll 2^{\omega(r)}\sqrt y.
		\]
		The term \(c_re^{-y}\) differs from \(c_r\) by
		\(O(2^{\omega(r)}y)\), by \eqref{eq:cr-bound}; this is absorbed by the
		displayed square-root error for \(0<y\le1\).
		Equations \eqref{eq:cr} and \eqref{eq:kappa-beta} now prove
		\eqref{eq:uniform-D}.  Finally,
		\[
		0\le D_r(e^{-y})\le\sum_{d\ge1}\frac{e^{-dy}}d
		=-\log(1-e^{-y})\ll e^{-y}\qquad(y\ge1),
		\]
		which proves \eqref{eq:D-large-y}.
	\end{proof}
	
	\begin{coro}[Uniform form for \(W_{n,m}\)]\label{cor:uniform-W}
		Let \(u>0\).  If \(u(n+m)\le1\), then
		\begin{equation}\label{eq:uniform-W}
			\begin{aligned}
				W_{n,m}(e^{-u})={}&
				\kappa(nm)\log\frac1{u(n+m)}+\beta(nm)
				-\1_{\min(n,m)=1}\\
				&+O\!\left(2^{\omega(nm)}\sqrt{u(n+m)}+u(n+m)\right).
			\end{aligned}
		\end{equation}
		If \(u(n+m)\ge1\), then
		\begin{equation}\label{eq:W-large}
			W_{n,m}(e^{-u})\ll e^{-u(n+m)}.
		\end{equation}
		All implied constants are absolute.
	\end{coro}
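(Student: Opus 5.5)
The plan is to derive the corollary directly from the identity \eqref{eq:W-D} together with Theorem~\ref{thm:uniform-D}, applied with \(r=nm\) and \(y=u(n+m)\).

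First, since \(x=e^{-u}\), we have \(x^{n+m}=e^{-u(n+m)}\). Hence \eqref{eq:W-D} reads
\[
W_{n,m}(e^{-u})=D_{nm}\bigl(e^{-u(n+m)}\bigr)-\1_{\min(n,m)=1}\,e^{-u(n+m)}.
\]
This identity is exact. It only encodes the exclusion of the single term \(d=1\) when \(dn\ge2\) or \(dm\ge2\) fails, i.e.\ when \(\min(n,m)=1\). For \(d\ge2\) both conditions hold automatically, and the coprimality condition \((d,nm)=1\) is exactly the one in \(D_{nm}\).

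\emph{Case \(u(n+m)\le1\).} Apply \eqref{eq:uniform-D} with \(r=nm\) and \(y=u(n+m)\in(0,1]\). This gives
\[
D_{nm}\bigl(e^{-y}\bigr)=\kappa(nm)\log\frac1y+\beta(nm)+O\bigl(2^{\omega(nm)}\sqrt y+y\bigr),
\]
with an absolute constant, since the theorem is uniform in \(r\). For the correction term, write \(e^{-y}=1+O(y)\) for \(0<y\le1\), so that
\[
\1_{\min(n,m)=1}\,e^{-y}=\1_{\min(n,m)=1}+O(y).
\]
This \(O(y)\) is absorbed into the displayed error. Subtracting the two expansions gives \eqref{eq:uniform-W}.

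\emph{Case \(u(n+m)\ge1\).} By \eqref{eq:D-large-y}, \(D_{nm}(e^{-y})\ll e^{-y}\) uniformly in \(r\). The correction term is bounded by \(e^{-y}\) in absolute value. The triangle inequality then gives \eqref{eq:W-large}, again with an absolute constant.

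There is no genuine obstacle here. The only points needing care are that the uniformity in \(r=nm\) from Theorem~\ref{thm:uniform-D} carries over verbatim, and that the boundary value \(y=1\), which belongs to both cases, is consistent with both statements.
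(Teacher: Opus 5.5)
Your proposal is correct and is essentially the paper's own proof: apply Theorem~\ref{thm:uniform-D} with \(r=nm\) and \(y=u(n+m)\), combine it with \eqref{eq:W-D}, and treat the excluded \(d=1\) term using \(e^{-y}=1+O(y)\) for small \(y\) and the bound \(e^{-y}\) itself for large \(y\). Your extra check of \eqref{eq:W-D} directly from the definition of \(W_{n,m}\) is a step the paper leaves implicit, and it is correct.
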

	
	\begin{proof}
		Apply \ref{thm:uniform-D} to \(r=nm\) and \(y=u(n+m)\), and use
		\eqref{eq:W-D}.  In the small-\(y\) range,
		\(e^{-y}=1+O(y)\); in the large-\(y\) range it is itself
		\(O(e^{-y})\).
	\end{proof}
	
	\begin{rem}
		The factor \(2^{\omega(nm)}\) is the price of uniformity in the arithmetic
		indices.  Along a fixed edge it can safely be summed against coefficients
		\(O(n^{-3/2-\epsilon})\).  For a full two-variable family, the multiplicity
		of pairs with a given \(n+m\) must also be included.  Borderline
		coefficients such as \((\log n)/n\) still require cancellation.
	\end{rem}

	\section{Canonical definition and global recombination}
	
	The formulas of this section combine the power-series identity obtained in
	Section~2 with the Vasyunin cotangent representation
	\cite{Vasyunin1996}.  The canonical truncation and its exact
	primitive-pair recombination are specific to the present paper.
	
	\subsection{The one-variable series}
	
	To avoid the old collision with the letter \(G\), define
	\begin{align}
		\mathfrak m_1(x)&:=\sum_{n=2}^\infty\frac{\mu(n)}n x^n,
		\label{eq:m1}\\
		\mathfrak l_1(x)&:=\sum_{n=2}^\infty
		\frac{\mu(n)\log n}{n}x^n,
		\label{eq:l1}\\
		\mathfrak m_j(x)&:=\sum_{n=2}^\infty\frac{\mu(n)}{n^j}x^n,
		\qquad j=2,4.
		\label{eq:mj}
	\end{align}
	Put
	\begin{equation}\label{eq:P}
		\mathcal P(x):=
		x\bigl(\mathfrak m_1'(x)\mathfrak l_1(x)
		-\mathfrak m_1(x)\mathfrak l_1'(x)
		-\cstar\mathfrak m_1(x)\mathfrak m_1'(x)\bigr)
	\end{equation}
	and
	\begin{equation}\label{eq:B3}
		\mathfrak B_{[3]}(x):=
		-\mathfrak m_1(x)\mathfrak l_1(x)-\mathfrak m_1(x)^2
		-\frac{\pi^2}{36}\mathfrak m_1(x)\mathfrak m_2(x)
		+\frac{\pi^4}{5400}\mathfrak m_1(x)\mathfrak m_4(x).
	\end{equation}
	
	\begin{definition}[The canonical truncation]\label{def:F3}
		For \(0<x<1\), define
		\begin{equation}\label{eq:def-F3}
			\Fthree(x):=\mathcal P(x)+\Sigma_{[3]}(x)
			+\mathfrak B_{[3]}(x).
		\end{equation}
		Thus \(\rho_3\) is deleted at every ordered primitive pair
		before any edge is paired with its transpose.  The one-variable remainder
		\(\rho_3(n,1)\) is deleted at the same stage.
	\end{definition}
	
	Put
	\begin{align}
		\mathcal E_{\rho_3}(x)
		&:=
		\sum_{\substack{n\ge2,\ m\ge1\\(n,m)=1}}
		\eps_{n,m}\frac{\mu(n)\mu(m)}m
		\rho_3(n,a_{n,m})W_{n,m}(x),
		\label{eq:E-rho-global}\\
		\mathcal R_{1,3}(x)
		&:=
		\sum_{n=2}^{\infty}\mu(n)\rho_3(n,1)x^n.
		\label{eq:R-one-F3}
	\end{align}
	
	\begin{prop}[Exact relation between \(F\) and \(\Fthree\)]
		\label{prop:F-F3-relation}
		For \(0<x<1\),
		\begin{equation}\label{eq:F-F3-relation}
			F(x)=\Fthree(x)+\mathcal E_{\rho_3}(x)
			-\mathfrak m_1(x)\mathcal R_{1,3}(x).
		\end{equation}
		Consequently, \(\Fthree\) is exactly the function obtained from the
		power-series formula for \(F\) by setting \(\rho_3=0\) at every
		ordered occurrence.
	\end{prop}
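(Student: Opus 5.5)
The plan is to start from the power-series formula for $F(x)$ in Section~2, substitute the exact identity \eqref{eq:A-K-rho} everywhere a cotangent sum appears, and sort the resulting terms into $\mathcal P$, $\mathfrak B_{[3]}$, $\Sigma_{[3]}$ and the two remainder series. Throughout we work at fixed $0<x<1$, so every series involved converges absolutely.

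\textbf{Step 1: the one-variable terms.} The sum $\sum_{k=1}^{n-1}(\frac12-\frac kn)\cot(\pi k/n)$ in the third term of that formula is exactly $A(n,1)$. By \eqref{eq:A-K-rho-unsigned} with $a=1$, together with Corollary~\ref{cor:rho-a-one},
\[
\pi\frac{A(n,1)}{n}=\log n+\cstar+\frac1n+\frac{\pi^2}{36n^2}-\frac{\pi^4}{5400n^4}+\rho_3(n,1).
\]
Multiplying by $\mu(n)x^n$ and summing over $n\ge2$, the third term becomes
\[
-\mathfrak m_1\Bigl(\sum_{n\ge2}\mu(n)\log n\,x^n+\cstar\sum_{n\ge2}\mu(n)x^n+\mathfrak m_1+\tfrac{\pi^2}{36}\mathfrak m_2-\tfrac{\pi^4}{5400}\mathfrak m_4\Bigr)-\mathfrak m_1\mathcal R_{1,3}.
\]
We then use $\sum_{n\ge2}\mu(n)x^n=x\mathfrak m_1'(x)$ and $\sum_{n\ge2}\mu(n)\log n\,x^n=x\mathfrak l_1'(x)$. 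With these, the first term $(\sum_{n\ge2}\mu(n)x^n)\mathfrak l_1$ of $F$ equals $x\mathfrak m_1'\mathfrak l_1$. Together with $-x\mathfrak m_1\mathfrak l_1'$ and $-\cstar x\mathfrak m_1\mathfrak m_1'$ it gives $\mathcal P$. The second term $-\mathfrak m_1\mathfrak l_1$ and the pieces $-\mathfrak m_1^2-\frac{\pi^2}{36}\mathfrak m_1\mathfrak m_2+\frac{\pi^4}{5400}\mathfrak m_1\mathfrak m_4$ give $\mathfrak B_{[3]}$. The leftover is exactly $-\mathfrak m_1\mathcal R_{1,3}$. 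This step is pure bookkeeping.

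\textbf{Step 2: the double series.} The last term of $F$ is the series $\Sigma(x)$ of \eqref{eq:Sigma-exact}. By Proposition~\ref{prop:gcd-layers},
\[
\Sigma(x)=\sum_{d\ge1}\frac{\mu(d)^2}{d}\Sigma_d(x^d).
\]
In each layer, primitive pairs with $n=1$ drop out since $A(1,m)=0$, consistent with $T_{[3]}(1,m)=0$. For $n\ge2$ we write
\[
\frac{A(n,m)}{nm}=\frac1m\cdot\frac{A(n,m)}n
\]
and apply \eqref{eq:A-K-rho}, which splits $\pi\Sigma_d$ into the $\Kfour$ part $\Sigma_{d,[3]}$ of \eqref{eq:d-total-explicit} plus the analogous series with $\Kfour$ replaced by $\rho_3$. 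Summing over $d$ and interchanging the $d$ and $(n,m)$ sums collects the $d$-weights into $W_{n,m}(x)$ via \eqref{eq:Wnm}. The $\Kfour$ part gives $\Sigma_{[3]}(x)$ by definition \eqref{eq:global-total-explicit}, and the $\rho_3$ part gives $\mathcal E_{\rho_3}(x)$. The condition $dn,dm\ge2$ in $\mathcal A_d$ is precisely the constraint built into $W_{n,m}$, which is why $m=1$ is allowed in \eqref{eq:E-rho-global}.

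\textbf{Main obstacle: justifying the rearrangements.} This needs absolute convergence of the triple series in $(d,n,m)$ separately for the $\Kfour$ part and the $\rho_3$ part, not only for their sum. For $\Kfour$, Lemma~\ref{lem:K-size} gives $|\Kfour(n,a_{n,m})|\ll1+\log n$, since $a_{n,m}\le n/2$. For $\rho_3$, I would use the trivial bound $|A(n,m)|\le\sum_k|\cot(\pi k/N)|\ll n\log n$, so that $|\rho_3|\le\pi|A|/n+|\Kfour|\ll1+\log n$. Against the weights $x^{d(n+m)}$ with $x<1$, both series then converge absolutely, and Fubini justifies each interchange. Adding Steps 1 and 2 yields \eqref{eq:F-F3-relation}. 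The final sentence of the statement is a restatement: $\mathcal E_{\rho_3}$ and $\mathfrak m_1\mathcal R_{1,3}$ collect every ordered occurrence of $\rho_3$.
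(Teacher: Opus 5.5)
Your proposal is correct and follows essentially the paper's proof. You expand the one-variable cotangent series via Corollary~\ref{cor:rho-a-one} to produce $\mathcal P$, $\mathfrak B_{[3]}$ and $-\mathfrak m_1\mathcal R_{1,3}$, and you use the gcd layers together with \eqref{eq:A-K-rho} to get $\pi\Sigma=\Sigma_{[3]}+\mathcal E_{\rho_3}$. Your additional points supply justifications the paper leaves implicit: that the $n=1$ layers vanish because $A(1,m)=0$, and that the $K_4$ and $\rho_3$ parts converge absolutely separately, via $|\rho_3|\ll 1+\log n$, a bound the paper only records later in Lemma~\ref{lem:rho-global-size}.
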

	
	\begin{proof}
		The gcd-layer decomposition and \eqref{eq:A-K-rho} give
		\[
		\pi\Sigma(x)=\Sigma_{[3]}(x)
		+\mathcal E_{\rho_3}(x).
		\]
		By \ref{cor:rho-a-one},
		\begin{align*}
			\pi\sum_{n=2}^{\infty}\frac{\mu(n)}nA(n,1)x^n
			={}&x\mathfrak l_1'(x)+\cstar x\mathfrak m_1'(x)
			+\mathfrak m_1(x)\\
			&+\frac{\pi^2}{36}\mathfrak m_2(x)
			-\frac{\pi^4}{5400}\mathfrak m_4(x)
			+\mathcal R_{1,3}(x).
		\end{align*}
		The power-series identity for \(F\) proved in Section 2 can be written
		\[
		F=x\mathfrak m_1'\mathfrak l_1
		-\mathfrak m_1\mathfrak l_1
		-\pi\mathfrak m_1
		\sum_{n=2}^{\infty}\frac{\mu(n)}nA(n,1)x^n
		+\pi\Sigma.
		\]
		Substitution shows that the derivative terms form \(\mathcal P\), the
		remaining four one-variable products form \(\mathfrak B_{[3]}\), and
		the two exact remainder terms are those in
		\eqref{eq:F-F3-relation}.
	\end{proof}
	
	Every series in \eqref{eq:def-F3} converges absolutely for \(x<1\).
	Indeed, this is immediate for the one-variable series, and follows for the
	primitive-pair series from \ref{lem:K-size} and
	\(0\le W_{n,m}(x)\le-\log(1-x^{n+m})\).
	
	\begin{prop}[The bounded one-variable part]\label{prop:B3-limit}
		The function \(\mathfrak B_{[3]}\) extends continuously to \(x=1\), and
		\begin{equation}\label{eq:B3-limit}
			\mathfrak B_{[3]}(1)
			=-\frac{37}{20}-\frac{\pi^2}{36}+\frac{\pi^4}{5400}.
		\end{equation}
	\end{prop}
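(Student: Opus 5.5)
The plan is to reduce everything to Abel's theorem on power series. The four one-variable series that make up \(\mathfrak B_{[3]}\) are \(\mathfrak m_1\), \(\mathfrak l_1\), \(\mathfrak m_2\), \(\mathfrak m_4\). Each is a power series whose coefficient series converges at \(x=1\), so each extends continuously to \([0,1]\). Then \(\mathfrak B_{[3]}\), being a polynomial in these four functions, extends continuously as well. After that only an arithmetic evaluation of the four values at \(x=1\) remains.

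The first step is convergence at \(x=1\) together with the limiting values.
\begin{itemize}
\item For \(\mathfrak m_2\) and \(\mathfrak m_4\) the series converge absolutely. They give \(\mathfrak m_2(1)=1/\zeta(2)-1=6/\pi^2-1\) and \(\mathfrak m_4(1)=1/\zeta(4)-1=90/\pi^4-1\).
\item For \(\mathfrak m_1\) we use the prime number theorem in the form \(\sum_{n\ge1}\mu(n)/n=0\), recalled in Section 1 (see also Lemma \ref{bigO}). This gives \(\mathfrak m_1(1)=-1\).
\item For \(\mathfrak l_1\), convergence of \(\sum\mu(n)\log n/n\) was established in Section 2, in the proposition on \(\langle1|f(u)\rangle\), via partial summation and Lemma \ref{bigO}.
\end{itemize}
Its value comes from the identity \(-\sum\mu(n)\log n\,n^{-s}=(1/\zeta)'(s)=-\zeta'(s)/\zeta(s)^2\to1\) as \(s\downarrow1\). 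We combine this with continuity of the Dirichlet series at \(s=1\), which holds by the same Abel/uniform-convergence argument used there. Hence \(\sum_{n\ge1}\mu(n)\log n/n=-1\), and since the \(n=1\) term vanishes, \(\mathfrak l_1(1)=-1\).

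The second step is Abel's theorem itself: if \(\sum c_n\) converges, then \(\sum c_nx^n\) converges uniformly on \([0,1]\). This is exactly the Abel-transform argument already used in Section 2 for \(h_k\). It gives continuity of \(\mathfrak m_1,\mathfrak l_1,\mathfrak m_2,\mathfrak m_4\) on \([0,1]\), hence of \(\mathfrak B_{[3]}\).

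The final step is evaluation. The four products give
\[
-\mathfrak m_1\mathfrak l_1=-1,\qquad -\mathfrak m_1^2=-1,
\]
\[
-\tfrac{\pi^2}{36}\mathfrak m_1\mathfrak m_2=\tfrac16-\tfrac{\pi^2}{36},\qquad
\tfrac{\pi^4}{5400}\mathfrak m_1\mathfrak m_4=-\tfrac1{60}+\tfrac{\pi^4}{5400}.
\]
Summing the rational parts gives \(-2+\tfrac16-\tfrac1{60}=-\tfrac{37}{20}\), which yields \eqref{eq:B3-limit}.

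The only non-routine input is the value \(\sum\mu(n)\log n/n=-1\). Its justification rests on two facts: convergence of this series, which is a consequence of the prime number theorem and was proved earlier in the paper, and passage to the limit \(s\downarrow1\) in the Dirichlet series. I expect no real obstacle here beyond citing those earlier arguments.
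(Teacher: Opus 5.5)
Your proposal is correct and follows essentially the same route as the paper. Both establish convergence of the coefficient series at \(x=1\): for \(\mathfrak m_1\) and \(\mathfrak l_1\) via the prime number theorem, Lemma~\ref{bigO} and partial summation, and by absolute convergence for \(\mathfrak m_2\) and \(\mathfrak m_4\). Both then use Abel's theorem for the boundary values, identify \(\mathfrak l_1(1)=-1\) through \(\zeta'(s)/\zeta(s)^2\to-1\) as \(s\downarrow1\), and substitute directly, where your arithmetic \(-2+\tfrac16-\tfrac1{60}=-\tfrac{37}{20}\) checks out.
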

	
	\begin{proof}
		The classical prime number theorem and its standard consequences for
		M\"obius Dirichlet series (see, for example, \cite{Ten95}) give, by
		partial summation,
		\[
		\sum_{n\ge1}\frac{\mu(n)}n=0,
		\qquad
		\sum_{n\ge1}\frac{\mu(n)\log n}{n}=-1.
		\]
		The second identity also follows by letting \(s\downarrow1\) in
		\[
		\sum_{n\ge1}\frac{\mu(n)\log n}{n^s}
		=\frac{\zeta'(s)}{\zeta(s)^2}.
		\]
		The estimate in Lemma~\ref{bigO}, together with partial summation, justifies
		the ordinary boundary sum for \(\mathfrak m_1\).  The same estimate
		applied after one further partial summation identifies the ordinary
		boundary value of \(\mathfrak l_1\) with the limit of
		\(\zeta'(s)/\zeta(s)^2\) as \(s\downarrow1\).  Hence
		\[
		\mathfrak m_1(1)=\mathfrak l_1(1)=-1.
		\]
		Absolute convergence gives
		\[
		\mathfrak m_2(1)=\frac1{\zeta(2)}-1,
		\qquad
		\mathfrak m_4(1)=\frac1{\zeta(4)}-1.
		\]
		Substitution in \eqref{eq:B3}, followed by
		\(\zeta(2)=\pi^2/6\) and \(\zeta(4)=\pi^4/90\), gives
		\eqref{eq:B3-limit}.
	\end{proof}
	
	\subsection{One primitive-pair kernel}
	
	Symmetric combinations of Vasyunin cotangent sums are central to the
	reciprocity formulas of Bettin and Conrey
	\cite{BettinConrey2013Reciprocity,BettinConrey2013Period}; related
	exponentially averaged identities appear in \cite{DarsesHillion2021}.
	The symmetrization below is an exact algebraic consequence of the ordered
	pair sum and does not assume a reciprocity theorem.
	
	For \((n,m)=1\), define the ordered kernel
	\begin{equation}\label{eq:Psi}
		\Psi_{[3]}(n,m):=
		\log m-\log n-\cstar
		+T_{[3]}(n,m),
	\end{equation}
	
	\begin{thm}[Exact global recombination]\label{thm:global-recombination}
		For \(0<x<1\),
		\begin{equation}\label{eq:R3-ordered}
			\Fthree(x)=\mathfrak B_{[3]}(x)+\Rthree(x),
		\end{equation}
		where
		\begin{equation}\label{eq:R3}
			\Rthree(x)=
			\sum_{\substack{n,m\ge1\\(n,m)=1}}
			\frac{\mu(n)\mu(m)}m\Psi_{[3]}(n,m)W_{n,m}(x).
		\end{equation}
		Equivalently, with
		\begin{equation}\label{eq:Psi-sym}
			\Psi_{[3]}^{\mathrm{sym}}(n,m)
			:=\frac{\Psi_{[3]}(n,m)}m+\frac{\Psi_{[3]}(m,n)}n,
		\end{equation}
		Explicitly,
		\begin{align}\label{eq:Psi-sym-expanded}
			\Psi_{[3]}^{\mathrm{sym}}(n,m)
			={}&\left(\frac1m-\frac1n\right)\log\frac mn
			-\cstar\left(\frac1m+\frac1n\right)\\
			&+\frac{T_{[3]}(n,m)}m+\frac{T_{[3]}(m,n)}n.\notag
		\end{align}
		one has
		\begin{equation}\label{eq:R3-sym}
			\Rthree(x)=\frac12
			\sum_{\substack{n,m\ge1\\(n,m)=1}}
			\mu(n)\mu(m)\Psi_{[3]}^{\mathrm{sym}}(n,m)W_{n,m}(x).
		\end{equation}
		In particular, the entire possibly unbounded part is independent of \(H\).
	\end{thm}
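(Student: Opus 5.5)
The plan is to show that \(\mathcal P(x)+\Sigma_{[3]}(x)=\Rthree(x)\). Definition~\ref{def:F3} then gives \eqref{eq:R3-ordered} at once. The kernel \(\Psi_{[3]}(n,m)\) in \eqref{eq:Psi} has two parts, \(T_{[3]}(n,m)\) and \(\log m-\log n-\cstar\). I will match the first part with \(\Sigma_{[3]}\) and the second with \(\mathcal P\). All rearrangements are justified by absolute convergence for \(0<x<1\), using the same bounds as before:
\begin{itemize}
\item Lemma~\ref{lem:K-size} gives \(|\Psi_{[3]}(n,m)|\ll 1+\log(nm)\);
\item \(0\le W_{n,m}(x)\le-\log(1-x^{n+m})\ll x^{n+m}/(1-x)\).
\end{itemize}

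First, the \(T_{[3]}\) part. By \eqref{eq:T3}, \(T_{[3]}(n,m)=\eps_{n,m}\Kfour(n,a_{n,m})\) for \(n\ge2\), and \(T_{[3]}(1,m)=0\). Hence
\[
\sum_{\substack{n,m\ge1\\(n,m)=1}}\frac{\mu(n)\mu(m)}m T_{[3]}(n,m)W_{n,m}(x)
\]
is exactly the all-sector primitive form of \(\Sigma_{[3]}(x)\). This follows from Proposition~\ref{prop:sector-exact}: sum \eqref{eq:primitive-sector} over the four sectors and use \eqref{eq:all-sectors}. Equivalently, insert \eqref{eq:d-total-explicit} into \eqref{eq:global-total-explicit} and interchange the \(d\)-sum with the pair sum. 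Here the conditions \((d,nm)=1\) and \(dn,dm\ge2\) coming from \(\mathcal A_d\) are exactly the ones defining \(W_{n,m}\) in \eqref{eq:Wnm}.

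Second, the logarithmic part, which is the main computation. Since \(x\mathfrak m_1'(x)=\sum_{N\ge2}\mu(N)x^N\) and \(x\mathfrak l_1'(x)=\sum_{N\ge2}\mu(N)\log N\,x^N\), I will expand each product in \eqref{eq:P} as a double series over \(N,M\ge2\). In the product \(x\mathfrak m_1\mathfrak l_1'\), I rename the variables so that the factor \(1/M\) always sits on the second index. This gives
\[
\mathcal P(x)=\sum_{N,M\ge2}\frac{\mu(N)\mu(M)}{M}\bigl(\log M-\log N-\cstar\bigr)x^{N+M}.
\]
Now I apply the gcd splitting of Proposition~\ref{prop:gcd-layers}. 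Write \(N=dn\) and \(M=dm\) with \(d=(N,M)\), so that \((n,m)=1\). Then:
\begin{itemize}
\item \(\mu(N)\mu(M)=\mu(d)^2\mu(n)\mu(m)\) when it is nonzero, and this forces \((d,nm)=1\);
\item \(1/M=1/(dm)\);
\item \(\log M-\log N=\log m-\log n\), so the dilation drops out of the logarithm.
\end{itemize}
The constraints \(N,M\ge2\) become \(dn,dm\ge2\). Summing over \(d\) therefore produces \(W_{n,m}(x)\) exactly, including the pair \(n=m=1\). That pair carries the weight \(W_{1,1}\), the sum over squarefree \(d\ge2\); it must be kept and not discarded. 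The result is
\[
\mathcal P(x)=\sum_{\substack{n,m\ge1\\(n,m)=1}}\frac{\mu(n)\mu(m)}m(\log m-\log n-\cstar)W_{n,m}(x).
\]
Adding this to the first step gives \eqref{eq:R3}.

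Finally, the symmetric form. Relabel \((n,m)\mapsto(m,n)\) in \eqref{eq:R3}; this is legitimate because \(W_{n,m}=W_{m,n}\) by Definition~\ref{def:dilation} and the series converges absolutely. Averaging the two expressions gives \eqref{eq:R3-sym}. Expanding \(\Psi_{[3]}(n,m)/m+\Psi_{[3]}(m,n)/n\) then gives \eqref{eq:Psi-sym-expanded}, since
\[
\frac{\log m-\log n}{m}+\frac{\log n-\log m}{n}=\Bigl(\frac1m-\frac1n\Bigr)\log\frac mn .
\]
None of \(\mathfrak B_{[3]}\), \(\Psi_{[3]}\) or \(W_{n,m}\) involves \(H\), so \(\Rthree\) is independent of \(H\). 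This is consistent with Proposition~\ref{prop:H-transfer}.

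The only delicate point is the index bookkeeping at the boundary. One must check that the range \(N,M\ge2\) in \(\mathcal P\), and the range \(n\ge2\) in \(\Sigma_{[3]}\), both correspond exactly to the single set of coprime pairs \(n,m\ge1\) weighted by \(W_{n,m}\). This includes the \(d=1\) exclusions recorded in \eqref{eq:W-D}.
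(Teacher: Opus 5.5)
Your proposal is correct and follows the paper's own proof. You expand \(\mathcal P\) as a double series over \(N,M\ge2\), split off the gcd \(d\) so that the two \(\log d\)'s cancel and the \(d\)-sum reproduces \(W_{n,m}\) (including the \((1,1)\) pair), add the all-sector primitive form of \(\Sigma_{[3]}\) using \(T_{[3]}(1,m)=0\), and then symmetrize using \(W_{n,m}=W_{m,n}\) and absolute convergence; your explicit checks of the expanded symmetric kernel and of the boundary index bookkeeping just fill in details the paper leaves implicit.
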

	
	\begin{proof}
		Expanding \eqref{eq:P} and interchanging its two dummy indices gives
		\begin{equation}\label{eq:P-double}
			\mathcal P(x)=
			\sum_{N,M\ge2}\frac{\mu(N)\mu(M)}M
			\bigl(\log M-\log N-\cstar\bigr)x^{N+M}.
		\end{equation}
		Write \(N=dn\), \(M=dm\), with \((n,m)=1\).  Under the squarefree
		coprimality conditions,
		\[
		\frac{\mu(dn)\mu(dm)}{dm}
		=\frac{\mu(d)^2}{d}\frac{\mu(n)\mu(m)}m,
		\]
		and the two occurrences of \(\log d\) cancel.  Summing \(d\) produces
		\(W_{n,m}\), so
		\[
		\mathcal P(x)=
		\sum_{\substack{n,m\ge1\\(n,m)=1}}
		\frac{\mu(n)\mu(m)}m
		(\log m-\log n-\cstar)W_{n,m}(x).
		\]
		Adding \eqref{eq:all-sectors} proves \eqref{eq:R3}.  Since \(W_{n,m}\)
		and \(\mu(n)\mu(m)\) are symmetric, averaging \eqref{eq:R3} with the
		same expression after \(n\) and \(m\) are interchanged proves
		\eqref{eq:R3-sym}.  The last assertion follows either from this formula or
		from \ref{prop:H-transfer}.
	\end{proof}
	
	The boundedness problem \eqref{eq:boundedness-goal} is now equivalent to
	\begin{equation}\label{eq:R3-bounded-goal}
		\Rthree(x)=O(1)\qquad(x\uparrow1).
	\end{equation}
	This is the basic simplification: neither derivatives of one-variable
	M\"obius series nor separate core, edge, and bulk singularities remain in
	the final target.

	\section{Exact cancellations and a summable initial-edge block}
	
	For \(n\ge2\), write \(a=a_{n,m}\), \(\eps=\eps_{n,m}\), and set
	\begin{equation}\label{eq:Ralg}
		R_{[3]}(n,a):=
		\frac{\Ccoef_0(a;n)}n+\frac{\Ccoef_1(a;n)}{n^2}
		+\frac{\Ccoef_2(a;n)}{n^3}+\frac{\Ccoef_3(a;n)}{n^4}.
	\end{equation}
	
	\begin{prop}[Cancellation of the first three pieces]
		\label{prop:first-three-cancel}
		For \(n\ge2\),
		\begin{equation}\label{eq:Psi-piecewise}
			\Psi_{[3]}(n,m)=
			\begin{cases}
				\displaystyle \log\frac{m}{a}+R_{[3]}(n,a),&\eps=+1,\\[2mm]
				\displaystyle \log\frac{am}{n^2}-2\cstar-R_{[3]}(n,a),&\eps=-1.
			\end{cases}
		\end{equation}
		In particular, on the ordered positive initial edge \(m=a\),
		\begin{equation}\label{eq:ordered-initial}
			\Psi_{[3]}(n,a)=R_{[3]}(n,a).
		\end{equation}
		Thus the \(\log n\), \(-\log a\), and \(\cstar\) pieces cancel there
		exactly.
	\end{prop}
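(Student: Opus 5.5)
This is a direct unwinding of definitions, and I would split into the two cases \(\eps_{n,m}=\pm1\). For \(n\ge2\) the definition \eqref{eq:T3} gives
\[
T_{[3]}(n,m)=\eps\,\Kfour(n,a),
\]
and \eqref{eq:K4} together with \eqref{eq:Ralg} can be rewritten as
\[
\Kfour(n,a)=\log n-\log a+\cstar+R_{[3]}(n,a).
\]
Substituting this into the definition \eqref{eq:Psi} of \(\Psi_{[3]}\) leaves only elementary algebra.

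In the case \(\eps=+1\) we get
\[
\Psi_{[3]}(n,m)=\log m-\log n-\cstar+\log n-\log a+\cstar+R_{[3]}(n,a).
\]
The \(\log n\) terms cancel, and so do the \(\cstar\) terms. What remains is \(\log(m/a)+R_{[3]}(n,a)\), which is the first line of \eqref{eq:Psi-piecewise}.

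In the case \(\eps=-1\) we get
\[
\Psi_{[3]}(n,m)=\log m-\log n-\cstar-\log n+\log a-\cstar-R_{[3]}(n,a).
\]
This collects to \(\log(am/n^2)-2\cstar-R_{[3]}(n,a)\), the second line.

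For \eqref{eq:ordered-initial}, take \(m=a\) with \(1\le a\le n/2\) and \((a,n)=1\). Then \(r_n(a)=a\le n/2\), so \(\eps_{n,a}=+1\) and \(a_{n,a}=a\) by \eqref{eq:a-epsilon}. The first case then gives \(\Psi_{[3]}(n,a)=\log(a/a)+R_{[3]}(n,a)=R_{[3]}(n,a)\).

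The computation shows where each of the three pieces goes. The \(\log n\) piece \(K_{4,[0]}\) cancels against \(-\log n\) in \eqref{eq:Psi}. The \(\cstar\) piece \(K_{4,[2]}\) cancels against \(-\cstar\). The \(-\log a\) piece \(K_{4,[1]}\) cancels against \(\log m\) once \(m=a\).

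The only point needing care is the edge case \(r_n(m)=n/2\). Coprimality forces \(n=2\) and \(m\) odd, so \(a=1\) and \(\eps=+1\) by the convention in \eqref{eq:a-epsilon}; this is consistent with the above. Nothing analytic is required, and I expect no real obstacle beyond keeping track of the signs.
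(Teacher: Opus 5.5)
Your proposal is correct and is essentially the paper's proof: substitute \(T_{[3]}(n,m)=\varepsilon_{n,m}K_4(n,a)\), with \(K_4(n,a)=\log n-\log a+c_*+R_{[3]}(n,a)\), into the definition of \(\Psi_{[3]}\), and separate the cases \(\varepsilon=\pm1\). The paper leaves implicit two points that you check explicitly: that \(m=a\le n/2\) forces \(\varepsilon_{n,a}=+1\) and \(a_{n,a}=a\), and that the boundary case \(r_n(m)=n/2\) occurs only for \(n=2\).
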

	
	\begin{proof}
		Insert \eqref{eq:K4} into \eqref{eq:Psi}.  If \(\eps=+1\), the terms
		\(-\log n-\cstar\) cancel their counterparts in \(\Kfour\), leaving
		\(\log m-\log a\).  If \(\eps=-1\), they add instead, giving
		\(\log m-2\log n+\log a-2\cstar\).  This proves
		\eqref{eq:Psi-piecewise}; putting \(m=a\) in the positive case proves
		\eqref{eq:ordered-initial}.
	\end{proof}
	
	On a positive tail \(m=qn+a\), \(q\ge1\), the surviving logarithm is
	\(\log(1+qn/a)\).  On a negative edge \(m=qn-a\), \(q\ge1\), it is
	\(\log(a(qn-a)/n^2)-2\cstar\).  Therefore the cancellation on the
	positive initial edge does not extend algebraically to the two tails or to
	the bulk.
	
	\begin{rem}[The transposed pair after canonical truncation]
		For \(1\le a\le H<n/2\), the contribution of the ordered pair \((n,a)\)
		and its transpose \((a,n)\) to \(\Rthree\) is
		\begin{align}\label{eq:canonical-paired-initial}
			\mu(a)\mu(n)W_{n,a}(x)\Bigg[&
			\frac1a\sum_{\nu=0}^{3}
			\frac{\Ccoef_\nu(a;n)}{n^{\nu+1}}\\
			&+\frac1n\left(
			\log\frac na-\cstar
			+T_{[3]}(a,n)
			\right)\Bigg].\notag
		\end{align}
		This follows directly from \eqref{eq:Psi} and
		\eqref{eq:ordered-initial}.  In particular, the transposed term contains
		the truncated kernel \(\Kfour(a,a_{a,n})\), not the exact quantity
		\(\pi A(a,n)/a\).  Replacing it by the latter would silently restore the
		transposed remainder and would produce a different, generally
		\(H\)-dependent model.
	\end{rem}
	
	Equivalently, define the transposition defect
	\begin{equation}\label{eq:delta}
		\delta_a(n):=
		\frac{\pi A(a,n)}a
		-T_{[3]}(a,n),
		\qquad \delta_1(n):=0.
	\end{equation}
	For \(a\ge2\), the exact identity \eqref{eq:A-K-rho} gives
	\begin{equation}\label{eq:delta-hidden-rho}
		\delta_a(n)=\eps_{a,n}\rho_3
		\bigl(a,a_{a,n}\bigr).
	\end{equation}
	Thus \(\delta_a\) is precisely the transposed remainder that would be
	silently restored by pairing before truncating.
	Using \ref{lem:C0-identity}, \eqref{eq:canonical-paired-initial} becomes
	\begin{align}\label{eq:canonical-paired-defect}
		\mu(a)\mu(n)W_{n,a}(x)\Bigg[&
		\frac{1+\log(n/a)-\cstar-\delta_a(n)}n\\
		&+\frac1a\left(
		\frac{\Ccoef_1(a;n)}{n^2}
		+\frac{\Ccoef_2(a;n)}{n^3}
		+\frac{\Ccoef_3(a;n)}{n^4}\right)\Bigg].\notag
	\end{align}
	For fixed \(a\), the defect is bounded and periodic in \(n\pmod a\).
	
	\subsection{Parity and the absence of a principal residue character}
	
	We use the standard reflection identity for Bernoulli polynomials
	\cite[\S~24.4(ii)]{NIST2010}, together with the usual parity decomposition
	and orthogonality relations for Dirichlet characters
	\cite{Davenport2000}.
	
	Define the centered coefficients
	\begin{equation}\label{eq:centered}
		\begin{aligned}
			\Ccoef_0^\circ(a;n)&:=\Ccoef_0(a;n)-a,&
			\Ccoef_1^\circ(a;n)&:=\Ccoef_1(a;n)-\frac{\pi^2}{36},\\
			\Ccoef_3^\circ(a;n)&:=\Ccoef_3(a;n)+\frac{\pi^4}{5400}.
		\end{aligned}
	\end{equation}
	
	\begin{prop}[Coefficient parity]\label{prop:coefficient-parity}
		On the reduced residue classes modulo \(a\),
		\begin{align}
			\Ccoef_0^\circ(a;-n)&=-\Ccoef_0^\circ(a;n),&
			\Ccoef_1^\circ(a;-n)&=\Ccoef_1^\circ(a;n),\notag\\
			\Ccoef_2(a;-n)&=-\Ccoef_2(a;n),&
			\Ccoef_3^\circ(a;-n)&=\Ccoef_3^\circ(a;n).
			\label{eq:coefficient-parity}
		\end{align}
		Consequently, \(\Ccoef_0^\circ\) and \(\Ccoef_2\) have zero average on
		\((\mathbb Z/a\mathbb Z)^\times\) and their character expansions contain
		only odd Dirichlet characters.  The centered coefficients
		\(\Ccoef_1^\circ\) and \(\Ccoef_3^\circ\) contain only even characters.
	\end{prop}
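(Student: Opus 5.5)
The plan is to replace \(n\) by \(-n\) in each coefficient \eqref{eq:C0}--\eqref{eq:C3} and track how the phases \(\theta_j\) move, keeping the angles \(u_j=\pi j/a\) fixed. For \((a,n)=1\) and \(1\le j<a\), the number \(jn/a\) is never an integer, so
\[
\theta_j(-n)=\left\{\frac{-jn}{a}\right\}=1-\theta_j(n).
\]
The reflection identity \(B_r(1-t)=(-1)^rB_r(t)\) \cite[\S~24.4(ii)]{NIST2010} then gives
\[
B_r(\theta_j(-n))=(-1)^rB_r(\theta_j(n)),\qquad 1\le j<a.
\]
The trigonometric weights \(\cot u_j\), \(\csc^2u_j\), \(\csc^2u_j\cot u_j\) and \(2\csc^2u_j\cot^2u_j+\csc^4u_j\) do not depend on \(n\). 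Each interior sum in \eqref{eq:C0}--\eqref{eq:C3} is therefore multiplied by \((-1)^{\nu+1}\) at order \(\nu\).

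Next I check the constant parts. The constants \(a\), \(\pi^2/36\) and \(-\pi^4/5400\) are exactly what the centering \eqref{eq:centered} removes, and \(\Ccoef_2\) has no constant. So each of \(\Ccoef_0^\circ\), \(\Ccoef_1^\circ\), \(\Ccoef_2\), \(\Ccoef_3^\circ\) equals \(\pi^{\nu+1}\) times a rational constant times a pure interior sum. The signs \(-,+,-,+\) for \(\nu=0,1,2,3\) then give \eqref{eq:coefficient-parity}. Since \(\theta_j\) depends only on \(n\bmod a\), all four quantities are functions on \((\mathbb Z/a\mathbb Z)^\times\), and the identity should be read there.

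The consequences follow from character orthogonality \cite{Davenport2000}. Expand \(f(n)=\sum_\chi \hat f(\chi)\chi(n)\) over characters mod \(a\), with
\[
\hat f(\chi)=\frac1{\varphi(a)}\sum_{n}f(n)\overline{\chi(n)}.
\]
Substituting \(n\mapsto -n\) and using \(\chi(-1)=\pm1\):
\begin{itemize}
\item if \(f\) is odd, then \(\hat f(\chi)=-\chi(-1)\hat f(\chi)\), so \(\hat f\) vanishes on even characters;
\item if \(f\) is even, then \(\hat f\) vanishes on odd characters.
\end{itemize}
The principal character is even, so an odd \(f\) has zero average, \(\hat f(\chi_0)=0\).

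The case \(a\le2\) needs a separate check. Here \(-1\equiv1\), so an odd function must vanish identically. This is consistent: for \(a=1\) the sums are empty, and for \(a=2\) one has \(\cot(\pi/2)=0\) and \(\csc^2\cot=0\) at \(u=\pi/2\), so \(\Ccoef_0^\circ=\Ccoef_2=0\). There is no real obstacle; the only point needing care is the non-integrality of \(jn/a\) that justifies the reflection \(\theta_j(-n)=1-\theta_j(n)\).
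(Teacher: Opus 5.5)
Your proof is correct and follows the paper's argument: the reflection $\{-jn/a\}=1-\{jn/a\}$ (valid because $a\nmid jn$), combined with $B_k(1-t)=(-1)^kB_k(t)$ and the centering that strips the constants, gives the four parities, and the standard parity and orthogonality argument then kills the even (respectively odd) character coefficients and the average. Your separate check for $a\le2$ is harmless but not needed, since the general identity already forces an odd function to vanish when $-1\equiv1\pmod a$.
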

	
	\begin{proof}
		For \((n,a)=1\),
		\(\{-jn/a\}=1-\{jn/a\}\).  The reflection formula
		\[
		B_k(1-t)=(-1)^kB_k(t)
		\]
		in \eqref{eq:C0}--\eqref{eq:C3} proves
		\eqref{eq:coefficient-parity}.  Pairing a reduced residue \(r\) with
		\(-r\) gives the zero averages.  If \(f(-r)=-f(r)\), its Fourier
		coefficient against an even character vanishes; the analogous statement
		holds with the parities reversed.  Character orthogonality proves the last
		assertion.
	\end{proof}
	
	There is a related exact cancellation at fixed first coordinate.  For
	\(n>2H\), on
	\((\mathbb Z/n\mathbb Z)^\times\), put
	\[
	b_{n,H}^{\mathrm{edge}}(r)
	:=\1_{\min(r,n-r)\le H}\eps_r
	\Kfour(n,\min(r,n-r)),
	\]
	and define \(b_{n,H}^{\mathrm{bulk}}\) with the complementary indicator.
	Here \(\eps_r=+1\) for \(r\le n/2\) and \(-1\) otherwise.  Both functions
	are odd under \(r\mapsto-r\).  Hence
	\begin{equation}\label{eq:principal-character-zero}
		\sum_{r\in(\mathbb Z/n\mathbb Z)^\times}
		b_{n,H}^{\mathrm{edge}}(r)
		=\sum_{r\in(\mathbb Z/n\mathbb Z)^\times}
		b_{n,H}^{\mathrm{bulk}}(r)=0.
	\end{equation}
	Thus the principal character is absent separately from the signed
	cotangent edge and bulk kernels.  This statement does not include the
	nonperiodic logarithms in \(\Psi_{[3]}\); those must still be controlled.
	
	\subsection{The principal M\"obius--density mode}
	
	The Euler-product manipulations and the Dirichlet-character facts used
	below are standard; background may be found in
	\cite{Ten95,Davenport2000}.  Their application to the particular density
	\(\kappa(an)\) is recorded explicitly because it yields a normalized
	simple zero.
	
	Put
	\[
	g(n):=\prod_{p\mid n}\frac p{p+1}.
	\]
	For a fixed \(a\ge1\), define, initially for \(\Re s>1\),
	\begin{equation}\label{eq:Ma}
		\mathcal M_a(s):=
		\sum_{\substack{n\ge1\\(n,a)=1}}
		\frac{\mu(n)\kappa(an)}{n^s}.
	\end{equation}
	
	\begin{prop}[A universal simple zero]\label{prop:Ma-zero}
		The function \(\mathcal M_a\) continues holomorphically to a neighborhood
		of \(s=1\), and
		\begin{equation}\label{eq:Ma-expansion}
			\mathcal M_a(s)=(s-1)+O_a((s-1)^2).
		\end{equation}
		In particular,
		\begin{equation}\label{eq:Ma-values}
			\mathcal M_a(1)=0,\qquad \mathcal M_a'(1)=1.
		\end{equation}
	\end{prop}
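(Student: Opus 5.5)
The plan is to factor \(\mathcal M_a\) as an Euler product, extract a factor \(1/\zeta(s)\) to produce the zero at \(s=1\), and then evaluate the leftover product at \(s=1\) in closed form to get the derivative \(1\).

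\emph{Step 1: separate the density.} For \((n,a)=1\) the prime divisors of \(an\) are the disjoint union of those of \(a\) and those of \(n\). So from \eqref{eq:kappa-beta},
\[
\kappa(an)=\frac{g(a)\,g(n)}{\zeta(2)},
\]
with \(g\) multiplicative. Hence for \(\Re s>1\),
\[
\mathcal M_a(s)=\frac{g(a)}{\zeta(2)}\prod_{p\nmid a}\Bigl(1-\frac{p}{p+1}p^{-s}\Bigr).
\]

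\emph{Step 2: extract \(1/\zeta(s)\).} Using \(p/(p+1)=1-1/(p+1)\), each local factor can be written as
\[
1-\frac{p^{1-s}}{p+1}=(1-p^{-s})\,E_p(s),\qquad E_p(s):=1+\frac{p^{-s}}{(p+1)(1-p^{-s})}.
\]
Since \(E_p(s)=1+O(p^{-1-\sigma})\), the product \(\prod_p E_p(s)\) converges absolutely and locally uniformly on \(\Re s>0\), so it is holomorphic there. Reinserting the missing factors at the primes dividing \(a\) gives
\[
\mathcal M_a(s)=\frac{g(a)}{\zeta(2)}\cdot\frac1{\zeta(s)}\cdot\prod_{p\mid a}(1-p^{-s})^{-1}\cdot\prod_{p\nmid a}E_p(s).
\]
Every factor except \(1/\zeta(s)\) is holomorphic and nonvanishing near \(s=1\). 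Since \(\zeta\) has a simple pole with residue \(1\), we have \(1/\zeta(s)=(s-1)+O((s-1)^2)\). This gives the holomorphic continuation, \(\mathcal M_a(1)=0\), and
\[
\mathcal M_a'(1)=\frac{g(a)}{\zeta(2)}\prod_{p\mid a}(1-p^{-1})^{-1}\prod_{p\nmid a}E_p(1).
\]
The \(O_a((s-1)^2)\) term in \eqref{eq:Ma-expansion} depends on \(a\) only through these finitely many holomorphic factors.

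\emph{Step 3: compute the constant.} This is the only step that needs care, because the normalization \(\mathcal M_a'(1)=1\) must come out exactly. At \(s=1\),
\[
E_p(1)=1+\frac{1}{(p+1)(p-1)}=\frac{p^2}{p^2-1}=(1-p^{-2})^{-1},
\]
so
\[
\prod_{p\nmid a}E_p(1)=\zeta(2)\prod_{p\mid a}(1-p^{-2}).
\]
Substituting, the two factors \(\zeta(2)\) cancel and
\[
\mathcal M_a'(1)=g(a)\prod_{p\mid a}\frac{1-p^{-2}}{1-p^{-1}}=\prod_{p\mid a}\frac{p}{p+1}\Bigl(1+\frac1p\Bigr)=1,
\]
which is \eqref{eq:Ma-values}. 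The case \(a=1\) is the same computation with empty products.
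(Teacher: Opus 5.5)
Your proposal is correct and follows the paper's proof essentially step for step: factor $\kappa(an)=\kappa(a)g(n)$, write $\mathcal M_a$ as an Euler product, pull out $1/\zeta(s)$ against an auxiliary product whose local factors are $1+O(p^{-1-\sigma})$, and use $E_p(1)=(1-p^{-2})^{-1}$ so that the factors $\zeta(2)$ cancel and the remaining $a$-dependent factors multiply to $1$. The only difference is cosmetic bookkeeping: the paper's auxiliary product $H_0$ runs over all primes and is corrected by $\prod_{p\mid a}(1-p^{1-s}/(p+1))^{-1}$, whereas yours runs over $p\nmid a$ and is corrected by $\prod_{p\mid a}(1-p^{-s})^{-1}$.
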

	
	\begin{proof}
		Since \(\kappa(an)=\kappa(a)g(n)\) when \((a,n)=1\), Euler products give
		\[
		\mathcal M_a(s)=\kappa(a)
		\prod_{p\nmid a}\left(1-\frac{p^{1-s}}{p+1}\right).
		\]
		Let
		\[
		H_0(s):=\prod_p
		\frac{1-p^{1-s}/(p+1)}{1-p^{-s}}.
		\]
		The local factors are \(1+O(p^{-1-\Re s})\), so this product is
		holomorphic and nonzero near \(s=1\).  Hence
		\begin{equation}\label{eq:Ma-factor}
			\mathcal M_a(s)=
			\frac{\kappa(a)H_0(s)}{\zeta(s)}
			\prod_{p\mid a}\left(1-\frac{p^{1-s}}{p+1}\right)^{-1}.
		\end{equation}
		At \(s=1\),
		\[
		H_0(1)=\prod_p(1-p^{-2})^{-1}=\zeta(2),
		\]
		and therefore
		\[
		\kappa(a)H_0(1)
		\prod_{p\mid a}\left(1-\frac1{p+1}\right)^{-1}=1.
		\]
		Finally, \(1/\zeta(s)=(s-1)+O((s-1)^2)\), which proves
		\eqref{eq:Ma-expansion}.
	\end{proof}
	
	Here and below a Dirichlet--Abel value at \(s=1\) means the limit as
	\(\sigma\downarrow1\) of the corresponding Dirichlet series on
	\(\Re s=\sigma\).  In this convention, \eqref{eq:Ma-values} says
	\begin{align}
		\sum_{\substack{n\ge1\\(n,a)=1}}
		\frac{\mu(n)\kappa(an)}n&=0,\label{eq:density-constant-zero}\\
		\sum_{\substack{n\ge1\\(n,a)=1}}
		\frac{\mu(n)\kappa(an)\log n}{n}&=-1.
		\label{eq:density-log-value}
	\end{align}
	The first equality kills the principal \(a/n\) part of \(\Ccoef_0/n\)
	on a completed positive initial ray.  The remaining
	\(\Ccoef_0^\circ/n\) has only odd residue characters by
	\ref{prop:coefficient-parity}.
	
	\begin{rem}[What the simple zero does not prove]
		Equations \eqref{eq:density-constant-zero}--\eqref{eq:density-log-value}
		are genuine Dirichlet-series identities.  They cannot yet be inserted as
		the coefficient of \(\log(1/u)\) in the full Abel sum with
		\(W_{n,a}(e^{-u})\).  Summing the uniform error in
		\eqref{eq:uniform-W} against a borderline coefficient \((\log n)/n\)
		gives, by absolute values, as much as \(O_H(\log^2(1/u))\).  An additional
		M\"obius cancellation estimate is required to pass from the density
		identity to that Abel asymptotic.
	\end{rem}
	
	For completeness, the corrected completed-ray value of the paired
	principal term in \eqref{eq:canonical-paired-defect} can also be stated.
	Define the Dirichlet--Abel value
	\begin{equation}\label{eq:Da}
		\mathcal D_a:=
		\lim_{\sigma\downarrow1}
		\sum_{\substack{n\ge1\\(n,a)=1}}
		\frac{\mu(n)\kappa(an)\delta_a(n)}{n^\sigma}.
	\end{equation}
	This limit exists.  Indeed, \(\delta_a\) is bounded and periodic on
	\((\mathbb Z/a\mathbb Z)^\times\), hence has a finite expansion in
	Dirichlet characters modulo \(a\).  For every such character \(\chi\),
	absolute convergence for \(\Re s>1\) gives
	\begin{equation}\label{eq:twisted-density-factor}
		\sum_{\substack{n\ge1\\(n,a)=1}}
		\frac{\mu(n)\kappa(an)\chi(n)}{n^s}
		=
		\kappa(a)\frac{H_\chi(s)}{L(s,\chi)},
	\end{equation}
	where
	\[
	H_\chi(s):=
	\prod_{p\nmid a}
	\frac{1-\chi(p)p^{1-s}/(p+1)}
	{1-\chi(p)p^{-s}}.
	\]
	The local factors of \(H_\chi\) are
	\(1+O(p^{-1-\Re s})\), uniformly near \(s=1\), so
	\(H_\chi\) is holomorphic and nonzero there.  If \(\chi\) is
	nonprincipal, the classical nonvanishing theorem gives
	\(L(1,\chi)\ne0\) \cite{Davenport2000}; for the
	principal character, \(L(s,\chi)\) has a simple pole and the reciprocal
	has a zero.  Thus every character component has a finite limit at
	\(s=1\), proving the existence of \(\mathcal D_a\).  Equations
	\eqref{eq:density-constant-zero}--\eqref{eq:density-log-value} give
	\begin{equation}\label{eq:completed-paired-density}
		\sum_{\substack{n\ge1\\(n,a)=1}}
		\mu(n)\kappa(an)
		\frac{1+\log(n/a)-\cstar-\delta_a(n)}n
		=-1-\mathcal D_a.
	\end{equation}
	Thus the completed density of these paired terms for \(a\le H\) is
	\begin{equation}\label{eq:completed-H-density}
		-\sum_{a\le H}\mu(a)-\sum_{a\le H}\mu(a)\mathcal D_a.
	\end{equation}
	The second sum is precisely the correction forced by canonical truncation.
	The completion from the geometric range \(n>2H\) to all \(n\ge1\)
	introduces finitely many artificial terms \(n\le2H\); they belong to
	the completed density identity but no longer correspond term by term to
	the initial-edge sector.  Formula \eqref{eq:completed-H-density} is
	therefore a density identity, not yet an asymptotic coefficient of the
	\(W\)-weighted Abel sum.

	\subsection{A genuinely summable initial-edge block}
	
	The uniform dilation estimate does yield a complete asymptotic whenever
	the coefficient of \(W_{n,a}\) is \(O(n^{-2})\).  We first isolate the
	general statement.
	
	\begin{lemma}[Summable fixed-edge coefficients]\label{lem:summable-edge}
		Fix \(a,N\ge1\), \(p\ge2\), and a bounded sequence \((q_n)\).  Put
		\[
		S_{a,p}(u):=
		\sum_{\substack{n>N\\(n,a)=1}}
		\frac{q_n}{n^p}W_{n,a}(e^{-u}).
		\]
		Then
		\begin{equation}\label{eq:summable-edge-asymptotic}
			S_{a,p}(u)=L_{a,p}\log\frac1u+C_{a,p}+O_{a,N,q}(\sqrt u),
		\end{equation}
		where
		\begin{align}
			L_{a,p}&:=
			\sum_{\substack{n>N\\(n,a)=1}}
			\frac{q_n\kappa(an)}{n^p},\label{eq:Lap}\\
			C_{a,p}&:=
			\sum_{\substack{n>N\\(n,a)=1}}
			\frac{q_n}{n^p}
			\bigl(\beta(an)-\kappa(an)\log(n+a)-\1_{a=1}\bigr).
			\label{eq:Cap}
		\end{align}
		Both series converge absolutely.
	\end{lemma}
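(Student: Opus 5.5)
The plan is to insert Corollary~\ref{cor:uniform-W} termwise and split the \(n\)-sum at the transition scale \(n+a\asymp1/u\). Since \(a\) and \(N\) are fixed, the factor \(2^{\omega(an)}\) satisfies \(2^{\omega(an)}\le 2^{\omega(a)}\tau(n)\ll_{a,\epsilon} n^{\epsilon}\). Also \(\kappa(an)\le 1/\zeta(2)\) and \(|\beta(an)|\ll_a 1+\sum_{p\mid n}(\log p)/p\ll \log\log(3n)\). Because \(p\ge2\) and \((q_n)\) is bounded, the series \eqref{eq:Lap} and \eqref{eq:Cap} are dominated by \(\sum n^{-2}(1+\log n)\), so both converge absolutely. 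This also settles the last assertion of the lemma.

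In the range \(u(n+a)\le1\), i.e. \(n\le 1/u-a\), apply \eqref{eq:uniform-W} with \(m=a\). Writing \(\log\frac1{u(n+a)}=\log\frac1u-\log(n+a)\), the main terms give exactly the summands of \(L_{a,p}\log\frac1u+C_{a,p}\), truncated to \(n\le1/u-a\). Here the indicator \(\1_{\min(n,a)=1}\) equals \(\1_{a=1}\), since \(n>N\ge1\) forces \(n\ge2\). The error terms contribute
\[
\ll \sum_{n\le 1/u}\frac{n^{\epsilon}}{n^{p}}\bigl(\sqrt{u(n+a)}+u(n+a)\bigr)\ll_a \sqrt u\sum_{n}n^{\epsilon+1/2-p}+u\sum_{n\le1/u}n^{1-p}.
\]
With \(p\ge2\) and \(\epsilon<1/2\), the first sum converges, giving \(O(\sqrt u)\). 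The second sum is \(O(u\log(1/u))=O(\sqrt u)\).

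It remains to complete the truncated main-term sums to the full series and to treat the range \(n>1/u-a\). For the completion, the tails of \(L_{a,p}\log\frac1u\) and \(C_{a,p}\) beyond \(n\approx1/u\) are
\[
\ll \log\tfrac1u\sum_{n>1/u-a}n^{-2}+\sum_{n>1/u-a}\frac{\log n}{n^{2}}\ll u\log\tfrac1u\ll\sqrt u.
\]
For \(n>1/u-a\) (so \(u(n+a)\ge1\)), \eqref{eq:W-large} gives \(W_{n,a}(e^{-u})\ll e^{-u(n+a)}\le1\). That range therefore contributes \(\ll\sum_{n>1/u-a}n^{-2}\ll u\). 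For \(u\) not small, say \(u\ge 1/(2a)\), everything is trivially bounded, and the claim holds by enlarging the constant.

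I expect the only delicate point to be the uniform error \(2^{\omega(an)}\sqrt{u(n+a)}\) summed against \(n^{-p}\). The case \(p=2\) is the borderline, and it works only because \(2^{\omega(n)}\ll n^{\epsilon}\) leaves the exponent \(-3/2+\epsilon\) summable. This matches the remark after Corollary~\ref{cor:uniform-W}. Everything else is bookkeeping, with all constants allowed to depend on \(a\), \(N\), and \(\sup|q_n|\).
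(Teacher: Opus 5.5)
Your proposal is correct and follows the paper's proof essentially step for step. You split at \(u(n+a)=1\), insert Corollary~\ref{cor:uniform-W} in the lower range, and sum the square-root and linear errors against \(n^{-p}\); above the cut you use \eqref{eq:W-large} together with the \(O(u\log(1/u))\) tails of \eqref{eq:Lap}--\eqref{eq:Cap}. The differences are minor: you control \(2^{\omega(an)}\) by the divisor bound \(n^{\epsilon}\) where the paper uses convergence of \(\sum_n 2^{\omega(n)}n^{-s}=\zeta(s)^2/\zeta(2s)\), and you make explicit two points the paper leaves tacit, namely \(\1_{\min(n,a)=1}=\1_{a=1}\) because \(n\ge2\), and the trivial range \(u\gg_a1\).
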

	
	\begin{proof}
		Since \(\kappa(r)\le1\) and
		\begin{equation}\label{eq:beta-log-bound}
			|\beta(an)|\ll_a1+\log(2n),
		\end{equation}
		the two displayed series converge absolutely.  Split the \(n\)-sum at
		\(u(n+a)=1\).  In the lower range, \ref{cor:uniform-W} gives the stated
		main and constant terms.  Its square-root error sums to
		\[
		\ll_a\sqrt u\sum_{n\ge1}
		\frac{2^{\omega(n)}\sqrt{n+a}}{n^p}
		\ll_a\sqrt u\sum_{n\ge1}
		\frac{2^{\omega(n)}}{n^{p-1/2}}
		\ll_a\sqrt u.
		\]
		The last series converges because
		\(\sum_n2^{\omega(n)}n^{-s}=\zeta(s)^2/\zeta(2s)\) for \(\Re s>1\);
		see, for example, \cite{Ten95}.
		The linear error contributes
		\[
		u\sum_{n\le1/u}\frac{n+a}{n^p}
		\ll_a
		\begin{cases}
			u\log(1/u),&p=2,\\
			u,&p>2,
		\end{cases}
		\]
		which is \(O_a(\sqrt u)\).  In the upper range,
		\eqref{eq:W-large} makes the actual tail \(O_a(u)\), while the omitted
		tails of \eqref{eq:Lap}--\eqref{eq:Cap} are
		\(O_a(u\log(1/u))\).  This completes the proof.
	\end{proof}
	
	For the canonical positive initial edge, define the inverse-power block
	\begin{equation}\label{eq:Iinv}
		\begin{aligned}
			\mathcal I_{H}^{\mathrm{inv}}(e^{-u})
			:={}&\sum_{a=1}^{H}\sum_{\substack{n>2H\\(n,a)=1}}
			\frac{\mu(a)\mu(n)}a\\
			&\times\left(
			\frac{\Ccoef_1(a;n)}{n^2}
			+\frac{\Ccoef_2(a;n)}{n^3}
			+\frac{\Ccoef_3(a;n)}{n^4}
			\right)W_{n,a}(e^{-u}).
		\end{aligned}
	\end{equation}
	This is also the inverse-power part in the canonically paired formula
	\eqref{eq:canonical-paired-defect}; transposition affects only its
	borderline \(1/n\) term.
	
	\begin{thm}[Inverse-power initial-edge asymptotic]
		\label{thm:Iinv}
		For every fixed \(H\ge1\),
		\begin{equation}\label{eq:Iinv-asymptotic}
			\mathcal I_H^{\mathrm{inv}}(e^{-u})
			=L_H^{\mathrm{inv}}\log\frac1u
			+C_H^{\mathrm{inv}}+O_H(\sqrt u),
		\end{equation}
		where
		\begin{align}
			L_H^{\mathrm{inv}}
			&:=\sum_{a=1}^{H}\sum_{\substack{n>2H\\(n,a)=1}}
			\frac{\mu(a)\mu(n)}a
			\left(
			\frac{\Ccoef_1(a;n)}{n^2}
			+\frac{\Ccoef_2(a;n)}{n^3}
			+\frac{\Ccoef_3(a;n)}{n^4}
			\right)\kappa(an),
			\label{eq:Linv}\\
			C_H^{\mathrm{inv}}
			&:=\sum_{a=1}^{H}\sum_{\substack{n>2H\\(n,a)=1}}
			\frac{\mu(a)\mu(n)}a
			\left(
			\frac{\Ccoef_1(a;n)}{n^2}
			+\frac{\Ccoef_2(a;n)}{n^3}
			+\frac{\Ccoef_3(a;n)}{n^4}
			\right)\notag\\
			&\hspace{16mm}\times
			\bigl(\beta(an)-\kappa(an)\log(n+a)-\1_{a=1}\bigr).
			\label{eq:Cinv}
		\end{align}
		Both coefficients are absolutely convergent.  In particular,
		\(\mathcal I_H^{\mathrm{inv}}\) is bounded after subtraction of the one
		explicit logarithm \(L_H^{\mathrm{inv}}\log(1/u)\).
	\end{thm}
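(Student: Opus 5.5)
The plan is to reduce Theorem~\ref{thm:Iinv} to finitely many applications of Lemma~\ref{lem:summable-edge}. Since \(H\) is fixed, the outer sum over \(1\le a\le H\) is finite. The inner sum splits into the three powers \(p=2,3,4\). It therefore suffices to treat, for each fixed \(a\le H\) and each \(p\in\{2,3,4\}\), the single-edge series
\[
\sum_{\substack{n>2H\\(n,a)=1}}\frac{q^{(p)}_n}{n^p}\,W_{n,a}(e^{-u}),
\qquad
q^{(p)}_n:=\frac{\mu(a)\mu(n)}a\,\Ccoef_{p-1}(a;n),
\]
and then add the resulting \(3H\) asymptotics. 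Linearity gives \eqref{eq:Iinv-asymptotic}. The coefficients \(L^{\mathrm{inv}}_H\) and \(C^{\mathrm{inv}}_H\) come out as the sums of the \(L_{a,p}\) and \(C_{a,p}\) from \eqref{eq:Lap}--\eqref{eq:Cap}, with \(N=2H\). These sums are exactly \eqref{eq:Linv}--\eqref{eq:Cinv}.

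The only hypothesis to check is that each \((q^{(p)}_n)_n\) is bounded, with a bound depending on \(a\) and hence on \(H\). I would argue this directly from the definitions \eqref{eq:C1}--\eqref{eq:C3}. For fixed \(a\), the coefficient \(\Ccoef_\nu(a;n)\) depends on \(n\) only through the phases \(\theta_j=\{jn/a\}\in[0,1)\). It is a finite sum of bounded Bernoulli polynomials times the fixed trigonometric constants \(\csc^2u_j\), \(\csc^2u_j\cot u_j\), \(\csc^2u_j\cot^2u_j\) and \(\csc^4u_j\), at \(u_j=\pi j/a\) with \(1\le j<a\). So it is periodic in \(n\) modulo \(a\), and hence bounded. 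Lemma~\ref{lem:K-size} gives this quantitatively as \(|\Ccoef_\nu(a;n)|\ll a^{\nu+1}\le H^{\nu+1}\) in the range \(a\le n/2\), which contains \(n>2H\ge 2a\). Since \(|\mu|\le1\), this gives \(|q^{(p)}_n|\ll_H 1\). This is the place where restricting to the inverse powers \(p\ge2\) matters: the borderline \(\Ccoef_0/n\) term is excluded, so Lemma~\ref{lem:summable-edge} applies verbatim.

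Next I would note absolute convergence. Lemma~\ref{lem:summable-edge} already asserts that each \(L_{a,p}\) and \(C_{a,p}\) converges absolutely. This uses \(\kappa\le1\) and the bound \eqref{eq:beta-log-bound} on \(\beta\), together with \(|\log(n+a)|\ll_a\log(2n)\) against \(n^{-p}\) with \(p\ge2\). A finite sum of absolutely convergent series is absolutely convergent. The error terms \(O_{a,N,q}(\sqrt u)\) become \(O_H(\sqrt u)\), because \(N=2H\), \(a\le H\), and the bound on \(q\) depends only on \(H\). The final sentence of the theorem is then a restatement of \eqref{eq:Iinv-asymptotic}.

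I expect no serious obstacle. The only point needing care is uniformity: the implied constants in Lemma~\ref{lem:summable-edge} depend on \(a\), \(N\) and \(\sup|q_n|\), and I must make sure that all of these are controlled by \(H\) alone. That holds because only finitely many \(a\) occur and the coefficient bounds above are uniform in \(n\).
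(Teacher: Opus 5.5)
Your proposal is correct and is essentially the paper's proof. The paper likewise notes that for each fixed \(a\le H\) the coefficients \(C_j(a;n)\) are bounded and periodic in \(n \bmod a\), so the bracket in the definition of \(\mathcal I_H^{\mathrm{inv}}\) is \(O_H(n^{-2})\); it then applies Lemma~\ref{lem:summable-edge} finitely many times and sums over \(a\le H\). Your splitting by the powers \(p=2,3,4\), rather than treating the whole bracket as a single bounded sequence at \(p=2\), is only a bookkeeping difference.
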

	
	\begin{proof}
		For each \(a\le H\), the coefficient functions
		\(\Ccoef_j(a;n)\) are bounded and periodic in \(n\pmod a\).  Consequently,
		the expression in parentheses in \eqref{eq:Iinv} is \(O_H(n^{-2})\).
		Apply \ref{lem:summable-edge} to its finitely many periodic components
		and sum over \(a\le H\).
	\end{proof}
	
	The theorem is deliberately blockwise.  A nonzero
	\(L_H^{\mathrm{inv}}\) may still cancel against the core, tail, or bulk.
	Conversely, the existence of the explicit logarithm shows why proving each
	sector bounded separately is generally the wrong target.

	\section{Finite-scale reduction and growth}
	
	Power-saving estimates for related sums containing both M\"obius weights
	and cotangent--Estermann terms were obtained by Maier and Rassias
	\cite{MaierRassias2018Sums,MaierRassias2019Explicit}.  Their summation
	regions and coefficient structures differ from the two-variable
	primitive-pair sum with the nonseparable dilation weight \(W_{n,m}\)
	considered here, so those estimates do not directly imply the bounds
	required below.  We therefore retain an exact finite-scale decomposition.
	
	\subsection{The exact near--far decomposition}
	
	Put
	\begin{equation}\label{eq:qnm}
		q_{n,m}:=\frac{\mu(n)\mu(m)}m\Psi_{[3]}(n,m).
	\end{equation}
	For \(u\in(0,1/2]\), let \(X=u^{-1}\), and define
	\begin{align}
		\mathfrak A_{[3]}(X)
		&:=\sum_{\substack{(n,m)=1\\n+m\le X}}
		q_{n,m}\kappa(nm),\label{eq:AX}\\
		\mathfrak C_{[3]}(X)
		&:=\sum_{\substack{(n,m)=1\\n+m\le X}}
		q_{n,m}\bigl(
		\beta(nm)-\kappa(nm)\log(n+m)
		-\1_{\min(n,m)=1}\bigr).
		\label{eq:CX}
	\end{align}
	For \(n+m\le X\), define the exact remainder
	\begin{align}
		\mathcal E_{n,m}(u):={}&W_{n,m}(e^{-u})
		-\kappa(nm)\bigl(\log(1/u)-\log(n+m)\bigr)\notag\\
		&-\beta(nm)+\1_{\min(n,m)=1},
		\label{eq:Enm}
	\end{align}
	and put
	\begin{align}
		\mathfrak E_{[3]}(u)
		&:=\sum_{\substack{(n,m)=1\\n+m\le X}}
		q_{n,m}\mathcal E_{n,m}(u),\label{eq:Eu}\\
		\mathfrak T_{[3]}(u)
		&:=\sum_{\substack{(n,m)=1\\n+m>X}}
		q_{n,m}W_{n,m}(e^{-u}).
		\label{eq:Tu}
	\end{align}
	
	\begin{thm}[Exact finite-scale formula]\label{thm:finite-scale}
		For \(0<u\le1/2\),
		\begin{equation}\label{eq:finite-scale}
			\begin{aligned}
				\Rthree(e^{-u})={}&
				\log\frac1u\,\mathfrak A_{[3]}(u^{-1})
				+\mathfrak C_{[3]}(u^{-1})\\
				&+\mathfrak E_{[3]}(u)+\mathfrak T_{[3]}(u).
			\end{aligned}
		\end{equation}
		Moreover,
		\begin{equation}\label{eq:E-bound}
			|\mathfrak E_{[3]}(u)|
			\ll\sum_{\substack{(n,m)=1\\n+m\le u^{-1}}}
			|q_{n,m}|\left(
			2^{\omega(nm)}\sqrt{u(n+m)}+u(n+m)\right),
		\end{equation}
		and the \(n\ge2\) part of every quantity in \eqref{eq:finite-scale} may be
		split into the core, positive-edge, negative-edge, and bulk sectors for any
		\(H\).  The \(n=1\) row is a separate, \(H\)-independent sector.  The sum
		of all five sectors is independent of \(H\).
	\end{thm}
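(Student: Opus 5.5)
The plan is to start from the primitive-pair representation \eqref{eq:R3} of Theorem~\ref{thm:global-recombination}, namely $\Rthree(e^{-u})=\sum_{(n,m)=1}q_{n,m}W_{n,m}(e^{-u})$ with $q_{n,m}$ as in \eqref{eq:qnm}, and split the index set at $n+m\le X=u^{-1}$ versus $n+m>X$. The far part is, by definition, $\mathfrak T_{[3]}(u)$. On the near part I would substitute the defining identity \eqref{eq:Enm}, read backwards:
\[
W_{n,m}(e^{-u})=\kappa(nm)\log\frac1u+\bigl(\beta(nm)-\kappa(nm)\log(n+m)-\1_{\min(n,m)=1}\bigr)+\mathcal E_{n,m}(u).
\]
Multiplying by $q_{n,m}$ and summing over the finitely many near pairs gives $\log(1/u)\,\mathfrak A_{[3]}(X)+\mathfrak C_{[3]}(X)+\mathfrak E_{[3]}(u)$. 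Since the near sums are finite, this rearrangement is legitimate. For the far part I need absolute convergence. This follows from $|q_{n,m}|\ll(1+\log n+\log m)/m$ (by \eqref{eq:Psi} and \eqref{eq:K-size}) together with $0\le W_{n,m}(e^{-u})\le-\log(1-e^{-u(n+m)})$, as already noted after Definition~\ref{def:F3}. That proves \eqref{eq:finite-scale}.

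For \eqref{eq:E-bound}, I apply Corollary~\ref{cor:uniform-W} termwise. Each near pair satisfies $u(n+m)\le1$, so the first case applies. It gives exactly $\mathcal E_{n,m}(u)=O\bigl(2^{\omega(nm)}\sqrt{u(n+m)}+u(n+m)\bigr)$ with an absolute constant. The triangle inequality then yields \eqref{eq:E-bound}.

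The one point needing care is the pairs with $n=1$ or $m=1$. The identity \eqref{eq:W-D} for $W_{n,m}$ and the correction $\1_{\min(n,m)=1}$ must be matched exactly. In particular, for $(n,m)=(1,1)$ we have $W_{1,1}$ with $d\ge2$, and the formula from Theorem~\ref{thm:uniform-D} still applies with $r=1$. For $n=1$, the kernel $T_{[3]}(1,m)=0$ by \eqref{eq:T3}, so $\Psi_{[3]}(1,m)$ is explicit; this is harmless. I expect this bookkeeping, rather than any analytic estimate, to be the main thing to check.

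Finally, the sector statements. Every quantity in \eqref{eq:finite-scale} is a sum over primitive pairs of a fixed summand. I split the $n\ge2$ pairs by the partition \eqref{eq:core-set}--\eqref{eq:bulk-set} and keep $n=1$ as a fifth set. Because these five sets partition the index set, each of $\mathfrak A_{[3]}$, $\mathfrak C_{[3]}$, $\mathfrak E_{[3]}$ and $\mathfrak T_{[3]}$ decomposes accordingly, and the total is the $H$-free left side. Alternatively, the argument of Proposition~\ref{prop:H-transfer} applies verbatim, since it concerns only index sets: adding the transfer identities cancels all moved rows and layers.
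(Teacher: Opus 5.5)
Your proposal is correct and follows the paper's proof essentially step for step. You split \eqref{eq:R3} at $n+m=u^{-1}$, rewrite the near range via the defining identity \eqref{eq:Enm}, obtain \eqref{eq:E-bound} termwise from Corollary~\ref{cor:uniform-W} since $u(n+m)\le1$ there, and read off the sector statement from the exact partition of primitive pairs (the paper cites Proposition~\ref{prop:H-transfer} for $H$-independence, which you also mention), while your explicit check of absolute convergence in the far range via $|q_{n,m}|\ll(1+\log n+\log m)/m$ and $0\le W_{n,m}(e^{-u})\le-\log(1-e^{-u(n+m)})$ supplies a detail the paper leaves implicit.
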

	
	\begin{proof}
		Split \eqref{eq:R3} at \(n+m=u^{-1}\).  In the near range, add and
		subtract the main and constant terms in \eqref{eq:uniform-W}; in the far
		range retain \(W_{n,m}\) exactly.  This proves \eqref{eq:finite-scale}.
		The uniform bound for \(\mathcal E_{n,m}\) in
		\ref{cor:uniform-W} gives \eqref{eq:E-bound}.  Sector decomposition is
		just restriction of the \(n\ge2\) summands to the four sets in
		\eqref{eq:core-set}--\eqref{eq:bulk-set}; its \(H\)-independence follows
		from \ref{prop:H-transfer}.  The omitted \(n=1\) row does not involve
		\(H\).
	\end{proof}
	
	Formula \eqref{eq:finite-scale} is the rigorous replacement for formally
	inserting a fixed-index asymptotic into an infinite double sum.  It leads
	to the following sufficient, though not necessary, criterion.
	
	\begin{prop}[A concrete Abel criterion]\label{crit:four-estimates}
		If, as \(X\to\infty\),
		\begin{equation}\label{eq:four-estimates}
			\begin{aligned}
				\mathfrak A_{[3]}(X)&=O\!\left(\frac1{\log X}\right),&
				\mathfrak C_{[3]}(X)&=O(1),\\
				\mathfrak E_{[3]}(X^{-1})&=O(1),&
				\mathfrak T_{[3]}(X^{-1})&=O(1).
			\end{aligned}
		\end{equation}
		then \(\Fthree(x)=O(1)\) as \(x\uparrow1\).
	\end{prop}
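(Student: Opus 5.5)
The criterion follows by combining three earlier results: the exact recombination of Theorem~\ref{thm:global-recombination}, the exact finite-scale formula of Theorem~\ref{thm:finite-scale}, and the boundary value computed in Proposition~\ref{prop:B3-limit}. No new analytic estimate is needed; the argument is bookkeeping.

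First, by \eqref{eq:R3-ordered}, for $0<x<1$ we have $\Fthree(x)=\mathfrak B_{[3]}(x)+\Rthree(x)$. Proposition~\ref{prop:B3-limit} shows that $\mathfrak B_{[3]}$ extends continuously to $[0,1]$. It is therefore bounded on $[x_0,1)$ for any $x_0\in(0,1)$. So it suffices to prove $\Rthree(x)=O(1)$ as $x\uparrow1$, which is exactly the reformulation \eqref{eq:R3-bounded-goal}.

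Second, write $x=e^{-u}$, so that $x\uparrow1$ corresponds to $u\downarrow0$. We may restrict to $0<u\le1/2$ and set $X=u^{-1}\ge2$. Theorem~\ref{thm:finite-scale} then gives the exact identity
\[
\Rthree(e^{-u})=\log X\,\mathfrak A_{[3]}(X)+\mathfrak C_{[3]}(X)+\mathfrak E_{[3]}(X^{-1})+\mathfrak T_{[3]}(X^{-1}).
\]
Now insert the four hypotheses \eqref{eq:four-estimates}. The first term is $\log X\cdot O(1/\log X)=O(1)$. The remaining three terms are $O(1)$ directly. Hence there exist $X_1$ and $C$ with $|\Rthree(e^{-u})|\le C$ whenever $u\le\min(1/2,1/X_1)$.

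The only point needing care is that the hypotheses are stated as $X\to\infty$ through real values, while the finite-scale formula holds for every real $u\in(0,1/2]$. The substitution $X=1/u$ is therefore legitimate pointwise, and no discretization of $u$ is required. I do not expect a genuine obstacle here.

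The substantive difficulty lies entirely in verifying the hypotheses, not in this implication. The hardest is the first one, $\mathfrak A_{[3]}(X)=O(1/\log X)$, which demands the global bilinear M\"obius cancellation discussed in the preceding remarks. That is not part of the present claim.
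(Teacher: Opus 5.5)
Your argument is correct and is the same as the paper's: write $\Fthree=\mathfrak B_{[3]}+\Rthree$ via \eqref{eq:R3-ordered}, apply the exact finite-scale formula \eqref{eq:finite-scale} with $X=u^{-1}$, insert the four hypotheses, and use the boundedness of $\mathfrak B_{[3]}$ from Proposition~\ref{prop:B3-limit}. You only spell out two steps that the paper's one-line proof leaves implicit: the substitution $x=e^{-u}$ and the passage from $\Rthree$ to $\Fthree$.
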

	
	\begin{proof}
		Insert \eqref{eq:four-estimates} into \eqref{eq:finite-scale} and use the
		boundedness of \(\mathfrak B_{[3]}\) from \ref{prop:B3-limit}.
	\end{proof}
	
	The four conditions are designed to isolate the work still needed; the
	true cancellation may of course occur between two or more of the four
	terms.  In particular, \eqref{eq:E-bound} by absolute values is not strong
	enough for the borderline edge, tail, and bulk coefficients.
	
	\subsection{A global unconditional upper bound}
	
	\begin{lemma}\label{lem:Psi-size}
		For every primitive pair \((n,m)\),
		\begin{equation}\label{eq:Psi-size}
			|\Psi_{[3]}(n,m)|\ll1+\log n+\log m.
		\end{equation}
	\end{lemma}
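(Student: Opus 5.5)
We split into the cases \(n=1\) and \(n\ge2\), directly from the definition \eqref{eq:Psi}, \(\Psi_{[3]}(n,m)=\log m-\log n-\cstar+T_{[3]}(n,m)\). The term \(\log m-\log n-\cstar\) is trivially \(\ll1+\log n+\log m\), so everything reduces to bounding \(T_{[3]}(n,m)\).

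\emph{Case \(n=1\).} By \eqref{eq:T3}, \(T_{[3]}(1,m)=0\), so \(\Psi_{[3]}(1,m)=\log m-\cstar\) and the bound is immediate.

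\emph{Case \(n\ge2\).} Here \((n,m)=1\) and \(|T_{[3]}(n,m)|=|\Kfour(n,a_{n,m})|\), where \(a=a_{n,m}\) satisfies \(1\le a\le n/2\) and \((a,n)=1\). The first sub-case is \(n\ge3\). Then \(r_n(m)\ne n/2\), and \(a\) is the least absolute residue, so the hypotheses of Lemma~\ref{lem:K-size} hold exactly. That lemma gives \(\Kfour(n,a)\ll1+\log n\) with an absolute constant. The second sub-case is \(n=2\). Then \(a=1\), and Corollary~\ref{cor:rho-a-one} gives the explicit value \(\Kfour(2,1)=\log2+\cstar+\tfrac12+\pi^2/144-\pi^4/86400\), which is a constant. (Lemma~\ref{lem:K-size} also covers this, since \(a=1\le n/2\).)

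Combining the two cases gives \(|\Psi_{[3]}(n,m)|\le|\log m|+|\log n|+|\cstar|+O(1+\log n)\ll1+\log n+\log m\).

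The only point that needs care is that Lemma~\ref{lem:K-size} applies uniformly. Its proof bounds each \(\Ccoef_\nu(a;n)\) by \(a^{\nu+1}\) (with a logarithm when \(\nu=0\)), and then uses \(a\le n/2\) to get \(\Ccoef_\nu/n^{\nu+1}\ll1\) (respectively \(\ll\log n\) when \(\nu=0\)). We will check that no hidden dependence on \(m\) enters. This holds because \(\Kfour\) depends on \(m\) only through \(a\) and through the phases \(\theta_j=\{jn/a\}\), and the latter enter only via the bounded Bernoulli polynomials \(B_k\) on \([0,1]\). Hence the implied constant is absolute. No cancellation is needed, so we expect no real obstacle.
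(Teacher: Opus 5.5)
Your proposal is correct and follows essentially the same route as the paper. The case $n=1$ uses $T_{[3]}(1,m)=0$, and the case $n\ge2$ applies Lemma~\ref{lem:K-size} to $\Kfour(n,a_{n,m})$ with $1\le a_{n,m}\le n/2$ and $(a_{n,m},n)=1$. Your separate check of $n=2$ and the uniformity remark are unnecessary but harmless; note that the phases $\theta_j=\{jn/a\}$ depend on $m$ only through $a$.
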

	
	\begin{proof}
		If \(n=1\), this follows directly from
		\(\Psi_{[3]}(1,m)=\log m-\cstar\).  If \(n\ge2\), use
		\ref{lem:K-size} in \eqref{eq:Psi}.
	\end{proof}
	
	\begin{thm}[Unconditional growth]\label{thm:global-growth}
		As \(u\downarrow0\),
		\begin{equation}\label{eq:global-growth-u}
			|\Fthree(e^{-u})|\ll\frac1u\log^2\frac eu.
		\end{equation}
		Equivalently, as \(x\uparrow1\),
		\begin{equation}\label{eq:global-growth-x}
			|\Fthree(x)|\ll
			\frac{\log^2(e/(1-x))}{1-x}.
		\end{equation}
	\end{thm}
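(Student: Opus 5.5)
We bound \(\Fthree\) by absolute values, using the exact recombination of Theorem~\ref{thm:global-recombination}: \(\Fthree=\mathfrak B_{[3]}+\Rthree\). By Proposition~\ref{prop:B3-limit}, \(\mathfrak B_{[3]}\) is bounded on \((0,1)\), so it suffices to prove \(|\Rthree(e^{-u})|\ll u^{-1}\log^2(e/u)\). No cancellation will be used; the argument only needs the size bound of Lemma~\ref{lem:Psi-size} and a crude bound for the dilation weight.

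\emph{Step 1: the weight.} From \eqref{eq:Wnm} we have \(0\le W_{n,m}(e^{-u})\le-\log(1-e^{-u(n+m)})\). Put \(s=n+m\). This gives \(W_{n,m}(e^{-u})\ll 1+\log\frac1{us}\) when \(us\le1\), and \(W_{n,m}(e^{-u})\ll e^{-us}\) when \(us\ge1\). Alternatively, one can quote Corollary~\ref{cor:uniform-W}, using \(\kappa\le1\), \(|\beta(nm)|\ll 1+\log s\), and that \(2^{\omega(nm)}\sqrt{us}\ll \sqrt{s}\cdot s^{o(1)}\sqrt u\) is harmless. The elementary bound is cleaner, so we use it.

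\emph{Step 2: the coefficient.} By \eqref{eq:R3} and Lemma~\ref{lem:Psi-size}, the summand is bounded by
\[
\frac{1+\log s}{m}\,W_{n,m}(e^{-u}).
\]
We group the pairs by \(s=n+m\). For fixed \(s\), the sum \(\sum_{m<s}1/m\) is \(\ll\log(2s)\), which accounts for all pairs with \(n\ge1\). Hence
\[
|\Rthree(e^{-u})|\ll\sum_{s\ge2}\log^2(2s)\,w(us),
\qquad w(t):=\begin{cases}1+\log(1/t),&t\le1,\\ e^{-t},&t\ge1.\end{cases}
\]

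\emph{Step 3: summation.} For \(s\le1/u\), the elementary comparison \(\sum_{s\le X}\log(X/s)\ll X\) gives a contribution of at most \(\log^2(e/u)\sum_{s\le 1/u}(1+\log\frac1{us})\ll u^{-1}\log^2(e/u)\). For \(s>1/u\), we use \(\log^2(2s)\ll\log^2(e/u)+\log^2(us)\), which gives \(\sum_{s>1/u}e^{-us}\log^2(2s)\ll u^{-1}\log^2(e/u)\). Adding the two ranges proves \eqref{eq:global-growth-u}. Then \eqref{eq:global-growth-x} follows because \(1-x\asymp u\) as \(x=e^{-u}\uparrow1\).

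\emph{Main obstacle.} The only delicate point is to ensure the logarithmic losses stack only twice. The factor \(1/m\) summed over \(m<s\) produces one \(\log\), the kernel \(\Psi_{[3]}\) produces the second, and the dilation logarithm \(\log(1/(us))\) must be absorbed by the integral comparison rather than bounded by \(\log(1/u)\). Bounding it by \(\log(1/u)\) would lose a third logarithm. The \(n=1\) row and the terms \(\min(n,m)=1\) cause no extra difficulty, since both are covered by the same bounds.
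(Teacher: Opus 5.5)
Your proof is correct, and it is the same crude absolute-value argument as the paper's: Lemma~\ref{lem:Psi-size}, a majorant for $W_{n,m}$ with all coprimality conditions dropped, and no M\"obius cancellation. The only difference is the order of summation.

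\textbf{The paper's route.} It keeps the dilation variable $d$ outside. It majorizes $W_{n,m}(e^{-u})$ by $\sum_{d} d^{-1}e^{-ud(n+m)}$ and uses $e^{-ud(n+m)}=e^{-udn}e^{-udm}$ to factor the inner sum into one-variable sums. These give $t^{-1}\log^2(e/t)$ with $t=ud$. It then sums over $d$, split at $d=u^{-1}$.

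\textbf{Your route.} You sum over $d$ first, in closed form, to get $-\log(1-e^{-us})$, and then sum along the anti-diagonals $s=n+m$. The harmonic sum $\sum_{m<s}1/m$ and the bound $|\Psi_{[3]}(n,m)|\ll 1+\log s$ supply the two logarithms. The estimate $\sum_{s\le X}\log(X/s)\ll X$ absorbs the dilation logarithm without costing a third one.

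\textbf{Comparison.} Both arguments are equally short. Yours makes explicit that the majorant of $W_{n,m}$ depends only on $n+m$. The paper's factorization avoids counting along anti-diagonals.

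\textbf{Caveat on your aside.} Quoting Corollary~\ref{cor:uniform-W} with only the pointwise bound $2^{\omega(nm)}\ll s^{o(1)}$ is not harmless as you claim. After summing over $s\le u^{-1}$ it loses a factor $u^{-o(1)}$, which exceeds any power of $\log(1/u)$. You use the elementary majorant instead, so this does not affect your proof.
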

	
	\begin{proof}
		The bounded term \(\mathfrak B_{[3]}\) may be discarded.  By
		\ref{lem:Psi-size} and \eqref{eq:Wnm}, after dropping all coprimality
		conditions,
		\begin{align*}
			|\Rthree(e^{-u})|
			&\ll\sum_{d\ge1}\frac1d
			\sum_{n,m\ge1}
			\frac{1+\log n+\log m}{m}e^{-ud(n+m)}.
		\end{align*}
		For \(0<t\le1\), elementary integral comparison gives
		\begin{align*}
			\sum_{n\ge1}e^{-tn}&\ll t^{-1},&
			\sum_{n\ge1}(1+\log n)e^{-tn}&\ll t^{-1}\log(e/t),\\
			\sum_{m\ge1}\frac{e^{-tm}}m&\ll\log(e/t),&
			\sum_{m\ge1}\frac{(1+\log m)e^{-tm}}m&\ll\log^2(e/t).
		\end{align*}
		Consequently, the inner double sum is
		\begin{equation}\label{eq:inner-bound}
			\ll t^{-1}\log^2(e/t)\qquad(0<t\le1).
		\end{equation}
		For \(t\ge1\), it is \(O(e^{-2t})\).
		
		Apply these bounds with \(t=ud\).  The range \(d\le u^{-1}\) contributes
		\[
		\ll\frac1u\sum_{d\le u^{-1}}\frac1{d^2}
		\log^2\frac e{ud}
		\ll\frac1u\log^2\frac eu.
		\]
		For the complementary range, the large-\(t\) estimate gives
		\[
		\sum_{d>1/u}\frac{e^{-2ud}}d=O(1).
		\]  This proves
		\eqref{eq:global-growth-u}.  Since \(-\log x\asymp1-x\) near \(1\),
		\eqref{eq:global-growth-x} follows.
	\end{proof}
	
	This estimate is deliberately unconditional and uses no cancellation from
	\(\mu\).  It is therefore much weaker than the desired bound, but it proves
	that the explicitly truncated model has at most controlled
	power--logarithmic growth.
	
	\section{Bounds for the terms containing \texorpdfstring{\(\rho_3\)}{rho3}}
	\label{sec:rho-bounds}
	
	There is only one remainder in this paper.  Recall from
	\eqref{eq:rho3} that
	\[
	\rho_3(n,a)=\pi\frac{A(n,a)}n-\Kfour(n,a).
	\]
	Thus \(\rho_3\) measures the error after the normalization
	\(\pi A(n,a)/n\); no second normalization is used.  The two terms
	discarded in the definition of \(\Fthree\) are
	\(\mathcal E_{\rho_3}\) and \(\mathcal R_{1,3}\), defined in
	\eqref{eq:E-rho-global}--\eqref{eq:R-one-F3}.  Proposition
	\ref{prop:F-F3-relation} gives the exact identity
	\begin{equation}\label{eq:rho-relation-recalled}
		F(x)-\Fthree(x)
		=\mathcal E_{\rho_3}(x)
		-\mathfrak m_1(x)\mathcal R_{1,3}(x).
	\end{equation}
	We now bound the two terms on the right and isolate the part for which a
	genuinely bilinear estimate is still needed.
	
	\subsection{The one-variable remainder}
	
	\begin{prop}[The one-variable remainder is harmless]
		\label{prop:rho-one-bounded}
		The series \(\mathcal R_{1,3}\) extends to a \(C^4\)-function on
		\([0,1]\).  In particular,
		\begin{equation}\label{eq:rho-one-limit}
			\lim_{x\uparrow1}\mathcal R_{1,3}(x)
			=\sum_{n=2}^{\infty}\mu(n)\rho_3(n,1),
		\end{equation}
		and
		\[
		\mathfrak m_1(x)\mathcal R_{1,3}(x)=O(1)
		\qquad(x\uparrow1).
		\]
		Consequently,
		\begin{equation}\label{eq:rho-equivalence}
			F-\Fthree=O(1)
			\quad\Longleftrightarrow\quad
			\mathcal E_{\rho_3}=O(1)
			\qquad(x\uparrow1).
		\end{equation}
	\end{prop}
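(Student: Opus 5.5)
The plan is to use Corollary~\ref{cor:rho-a-one}, which gives \(\rho_3(n,1)=O(n^{-6})\), and then argue by absolute convergence. Write \(\mathcal R_{1,3}(x)=\sum_{n\ge2}b_nx^n\) with \(b_n:=\mu(n)\rho_3(n,1)\), so that \(|b_n|\ll n^{-6}\). The \(j\)-th termwise derivative is
\[
\sum_{n\ge2}b_n\,n(n-1)\cdots(n-j+1)\,x^{n-j}.
\]
For \(x\in[0,1]\) it is majorized by \(\sum_n n^{j}|b_n|\ll\sum_n n^{j-6}\). This is finite for \(j\le4\). By the Weierstrass M-test, each of these derived series converges uniformly on \([0,1]\). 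The standard theorem on termwise differentiation of uniformly convergent series then shows that \(\mathcal R_{1,3}\) extends to a \(C^4\) function on \([0,1]\). In particular it is continuous at \(x=1\), with the value \(\sum_{n\ge2}\mu(n)\rho_3(n,1)\); this is \eqref{eq:rho-one-limit}. (The same majorant even gives \(C^4\) for \(j=4\) with \(\sum n^{-2}\); no more smoothness is needed.)

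Next I will handle the product \(\mathfrak m_1\mathcal R_{1,3}\). The proof of Proposition~\ref{prop:B3-limit} shows, from Lemma~\ref{bigO} and partial summation, that \(\mathfrak m_1\) extends continuously to \(x=1\) with \(\mathfrak m_1(1)=-1\). It is therefore bounded on \([0,1]\). Together with the boundedness of the continuous function \(\mathcal R_{1,3}\) on \([0,1]\), this gives \(\mathfrak m_1(x)\mathcal R_{1,3}(x)=O(1)\) as \(x\uparrow1\). It also converges, to \(-\sum_{n\ge2}\mu(n)\rho_3(n,1)\).

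The equivalence \eqref{eq:rho-equivalence} then follows directly from the exact identity \eqref{eq:rho-relation-recalled}. We have \(F-\Fthree=\mathcal E_{\rho_3}-\mathfrak m_1\mathcal R_{1,3}\), and the second term is \(O(1)\). Hence \(F-\Fthree\) is bounded near \(1\) if and only if \(\mathcal E_{\rho_3}\) is.

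The only step with any content is the input \(\rho_3(n,1)=O(n^{-6})\) from Corollary~\ref{cor:rho-a-one}, which I take as given. I note that even the weaker estimate \(O(n^{-5})\) from Theorem~\ref{thm:fixed-residue-third-order} with \(H=1\) would already give \(C^3\) on \([0,1]\) and the boundedness claims. So the argument does not depend delicately on the improved exponent. I expect no genuine obstacle beyond verifying the uniform-convergence bookkeeping at the endpoint \(x=1\).
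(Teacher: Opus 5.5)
Your proposal is correct and follows the paper's proof essentially step for step: the bound $\rho_3(n,1)=O(n^{-6})$ from Corollary~\ref{cor:rho-a-one}, the Weierstrass test on the termwise derivatives up to order four, continuity of $\mathfrak m_1$ at $1$ from the proof of Proposition~\ref{prop:B3-limit}, and the exact identity \eqref{eq:rho-relation-recalled}. Your side remark is also correct: the weaker bound $O(n^{-5})$ alone would already give a $C^3$ extension and all the boundedness claims.
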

	
	\begin{proof}
		By \ref{cor:rho-a-one},
		\(\rho_3(n,1)=O(n^{-6})\).  After \(j\) termwise differentiations,
		the \(n\)-th term of \(\mathcal R_{1,3}\) is
		\(O(n^{j-6})\).  For every \(0\le j\le4\), the resulting numerical
		majorant is summable.  The Weierstrass test therefore gives uniform
		convergence on \([0,1]\) of the original series and its first four
		derivatives.  This proves the \(C^4\)-extension and
		\eqref{eq:rho-one-limit}.
		
		The function \(\mathfrak m_1\) extends continuously to \(1\), as shown
		in the proof of \ref{prop:B3-limit}; hence the product is bounded.
		Identity \eqref{eq:rho-relation-recalled} now proves
		\eqref{eq:rho-equivalence}.
	\end{proof}
	
	\subsection{Global size and the four sectors}
	
	The one-parameter asymptotic underlying the case \(a=1\) goes back to
	Vasyunin \cite{Vasyunin1996}; its error term was subsequently sharpened
	by Rassias \cite{Rassias2014}, while broader related cotangent-sum
	asymptotics were developed by Maier and Rassias
	\cite{MaierRassias2016}.  Here we use the fixed-residue third-order
	expansion proved internally in
	Theorem~\ref{thm:fixed-residue-third-order}.  It gives, for every fixed
	\(H\ge1\),
	\begin{equation}\label{eq:rho-fixed-H}
		\rho_3(n,a)=O_H(n^{-5})
		\qquad
		(1\le a\le H,\ (a,n)=1).
	\end{equation}
	This estimate is useful on the two edges, but its constant is not uniform
	when \(H\) varies.  In the bulk we use the following elementary global
	bound.
	
	\begin{lemma}[Global size of the remainder]\label{lem:rho-global-size}
		Uniformly for \(n\ge2\), \(1\le a\le n/2\), and \((a,n)=1\),
		\begin{equation}\label{eq:rho-global-size}
			|\rho_3(n,a)|\ll1+\log n.
		\end{equation}
	\end{lemma}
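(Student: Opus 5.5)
Since \(\rho_3(n,a)=\pi A(n,a)/n-\Kfour(n,a)\) by \eqref{eq:rho3}, the triangle inequality reduces \eqref{eq:rho-global-size} to two separate bounds, each uniform in \(1\le a\le n/2\), \((a,n)=1\):
\[
\pi\frac{|A(n,a)|}n\ll1+\log n,\qquad |\Kfour(n,a)|\ll1+\log n .
\]
The second bound is exactly \eqref{eq:K-size} of Lemma~\ref{lem:K-size}, with absolute constants. So the only work is a uniform bound for the exact cotangent sum \(A(n,a)\).

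For \(A(n,a)\) I use the definition \eqref{eq:def-A} with \(g=(n,a)=1\):
\[
A(n,a)=\sum_{k=1}^{n-1}\Bigl(\tfrac12-\bigl\{\tfrac{ka}{n}\bigr\}\Bigr)\cot\frac{\pi k}{n}.
\]
The first factor is bounded by \(1/2\) in absolute value. Moreover
\[
\Bigl|\cot\frac{\pi k}{n}\Bigr|\ll\frac n{\min(k,n-k)}
\]
for \(1\le k\le n-1\), since \(\sin(\pi k/n)\ge 2\min(k,n-k)/n\). Hence, by absolute values,
\[
|A(n,a)|\ll n\sum_{1\le k\le n/2}\frac1k\ll n(1+\log n).
\]
Dividing by \(n\) gives \(\pi|A(n,a)|/n\ll1+\log n\), uniformly in \(a\). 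Adding the two bounds proves \eqref{eq:rho-global-size}.

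There is no real obstacle here; the estimate is deliberately crude and uses no cancellation. The only point needing care is uniformity: both constants must be absolute and independent of \(a\). This holds because the bound for \(A\) never uses \(a\) beyond coprimality. Lemma~\ref{lem:K-size} already needs \(a\le n/2\) to divide \(a^{\nu+1}\) by \(n^{\nu+1}\), and that is why the hypothesis \(1\le a\le n/2\) appears.
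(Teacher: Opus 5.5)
Your proposal is correct and matches the paper's proof: the triangle inequality on the definition of $\rho_3$, the crude bound $|A(n,a)|\ll n\log(2n)$ obtained from $\bigl|\tfrac12-\{ka/n\}\bigr|\le\tfrac12$ and the reflected estimate $|\cot(\pi k/n)|\ll n/\min(k,n-k)$, and Lemma~\ref{lem:K-size} for $K_4(n,a)$. Your remark on uniformity in $a$, and on why the hypothesis $a\le n/2$ is needed for the $K_4$ bound, is also accurate.
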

	
	\begin{proof}
		Since
		\(\lvert\frac12-\{ka/n\}\rvert\le\frac12\), symmetry about \(n/2\)
		and \(\cot t\le t^{-1}\) for \(0<t\le\pi/2\) give
		\[
		\begin{aligned}
			|A(n,a)|
			&\le\frac12\sum_{k=1}^{n-1}
			\left|\cot\frac{\pi k}{n}\right|\\
			&\ll\sum_{1\le k\le n/2}\frac nk
			\ll n\log(2n).
		\end{aligned}
		\]
		Thus \(\pi A(n,a)/n\ll\log(2n)\).  On the other hand,
		\ref{lem:K-size} gives
		\(|\Kfour(n,a)|\ll1+\log n\) uniformly in the stated range.  The
		definition \eqref{eq:rho3} proves \eqref{eq:rho-global-size}.
	\end{proof}
	
	For \(\diamond\in\{\core,+,-,\bulk\}\), define directly
	\begin{equation}\label{eq:E-rho-sector}
		\mathcal E_{\rho_3,H}^{\diamond}(x)
		:=
		\sum_{\substack{(n,m)\in\mathcal Q_H^\diamond\\(n,m)=1}}
		\eps_{n,m}\frac{\mu(n)\mu(m)}m
		\rho_3(n,a_{n,m})W_{n,m}(x).
	\end{equation}
	The series are absolutely convergent for \(x<1\), and the partition in
	\eqref{eq:core-set}--\eqref{eq:bulk-set} gives
	\begin{equation}\label{eq:E-rho-sector-sum}
		\mathcal E_{\rho_3}
		=
		\mathcal E_{\rho_3,H}^{\core}
		+\mathcal E_{\rho_3,H}^{+}
		+\mathcal E_{\rho_3,H}^{-}
		+\mathcal E_{\rho_3,H}^{\bulk}.
	\end{equation}
	The sum on the left is independent of \(H\).  Passing from \(H\) to
	\(H+1\) transfers the same two rows and the same two residue layers as
	in \ref{prop:H-transfer}.
	
	There is also an exact rowwise cancellation.  For \(n>2H\) and a
	reduced residue \(r\bmod n\), put
	\[
	b_n^{\rho_3}(r):=
	\eps_r\,\rho_3\bigl(n,\min(r,n-r)\bigr).
	\]
	Here \(\eps_r=+1\) for \(r\le n/2\) and \(-1\) otherwise, and define
	\[
	b_{n,H}^{\rho_3,\mathrm{edge}}(r)
	:=\1_{\min(r,n-r)\le H}b_n^{\rho_3}(r),
	\qquad
	b_{n,H}^{\rho_3,\mathrm{bulk}}(r)
	:=\1_{H<\min(r,n-r)}b_n^{\rho_3}(r).
	\]
	All three functions are odd under \(r\mapsto-r\).  Hence
	\begin{equation}\label{eq:rho-principal-character-zero}
		\sum_{r\in(\mathbb Z/n\mathbb Z)^\times}
		b_{n,H}^{\rho_3,\mathrm{edge}}(r)
		=
		\sum_{r\in(\mathbb Z/n\mathbb Z)^\times}
		b_{n,H}^{\rho_3,\mathrm{bulk}}(r)=0.
	\end{equation}
	Thus the principal residue character is absent separately from the
	signed edge and bulk remainder kernels.  This does not by itself bound
	\eqref{eq:E-rho-sector}, because the factors
	\(\mu(m)W_{n,m}(x)/m\) are not constant on residue classes.
	
	We shall use the elementary estimates
	\begin{align}
		L(t):=\sum_{m\ge1}\frac{e^{-tm}}m
		&\ll
		\begin{cases}
			\log(e/t),&0<t\le1,\\
			e^{-t},&t\ge1,
		\end{cases}
		\label{eq:L-elementary}\\
		N(t):=\sum_{n\ge1}(1+\log n)e^{-tn}
		&\ll
		\begin{cases}
			t^{-1}\log(e/t),&0<t\le1,\\
			e^{-t},&t\ge1.
		\end{cases}
		\label{eq:N-elementary}
	\end{align}
	Both follow from integral comparison.
	
	\begin{thm}[Unconditional sector bounds]\label{thm:rho-sector-bounds}
		Let \(H\ge1\) be fixed and \(0<u\le1/2\).  Then
		\begin{align}
			\left|\mathcal E_{\rho_3,H}^{\core}(e^{-u})\right|
			&\ll_H\log^2\frac eu,
			\label{eq:rho-core-bound}\\
			\left|\mathcal E_{\rho_3,H}^{+}(e^{-u})\right|
			+\left|\mathcal E_{\rho_3,H}^{-}(e^{-u})\right|
			&\ll_H\log^2\frac eu,
			\label{eq:rho-edge-bound}\\
			\left|\mathcal E_{\rho_3,H}^{\bulk}(e^{-u})\right|
			&\ll\frac1u\log^2\frac eu.
			\label{eq:rho-bulk-bound}
		\end{align}
		The last implied constant is absolute.  Moreover,
		\begin{equation}\label{eq:rho-global-bound}
			|\mathcal E_{\rho_3}(e^{-u})|
			\ll\frac1u\log^2\frac eu.
		\end{equation}
		Consequently,
		\begin{equation}\label{eq:F-minus-F3-bound}
			|F(e^{-u})-\Fthree(e^{-u})|
			\ll\frac1u\log^2\frac eu.
		\end{equation}
	\end{thm}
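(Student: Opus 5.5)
We bound each sector by absolute values, and the only inputs are the size estimates for \(\rho_3\) and the elementary bounds for \(W_{n,m}\). First, from \eqref{eq:Wnm} and dropping coprimality,
\[
0\le W_{n,m}(e^{-u})\le \sum_{d\ge1}\frac1d e^{-ud(n+m)}=L\bigl(u(n+m)\bigr),
\]
with \(L\) as in \eqref{eq:L-elementary}. In particular \(W_{n,m}(e^{-u})\ll \log(e/(u(n+m)))\) for \(u(n+m)\le1\) and \(\ll e^{-u(n+m)}\) otherwise. Moreover, \(W_{n,m}(e^{-u})\le L(un)\) and \(W_{n,m}(e^{-u})\le L(um)\), so either variable alone can be used to produce decay.

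\emph{Core.} For \(2\le n\le 2H\) there are finitely many \(n\). The factor \(\rho_3(n,a)\) is \(O_H(1)\) by Lemma~\ref{lem:rho-global-size}, since \(n\) is bounded. The core sector is therefore bounded by \(\ll_H\sum_{m\ge1} m^{-1}L(um)\). Splitting at \(m=1/u\), the range \(m\le 1/u\) gives \(\sum_{m\le1/u}m^{-1}\log(e/(um))\ll\log^2(e/u)\), while the range \(m>1/u\) gives \(O(1)\) because of the exponential decay. This proves \eqref{eq:rho-core-bound}.

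\emph{Edges.} On \(\mathcal Q_H^\pm\) we have \(a_{n,m}=a\le H\), so \eqref{eq:rho-fixed-H} gives \(|\rho_3|\ll_H n^{-5}\). Writing \(m=qn\pm a\) and using \(W_{n,m}\le L(um)\), each edge is majorised by
\[
\ll_H \sum_{a\le H}\sum_{n>2H} n^{-5}\sum_{m\equiv\pm a\,(n)} \frac{L(um)}{m}\le \sum_{n} n^{-5}\sum_{m\ge1}\frac{L(um)}{m}.
\]
The inner sum is \(\ll\log^2(e/u)\), exactly as in the core, and \(\sum n^{-5}\) converges, which gives \eqref{eq:rho-edge-bound}. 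The strong decay in \(n\) makes this step routine.

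\emph{Bulk.} Here we use Lemma~\ref{lem:rho-global-size}, which gives \(|\rho_3|\ll 1+\log n\) with an absolute constant. We then proceed as in the proof of Theorem~\ref{thm:global-growth}: expand \(W_{n,m}\) over \(d\) and drop all arithmetic conditions, so that
\[
|\mathcal E^{\bulk}_{\rho_3,H}|\ll\sum_{d}\frac1d N(ud)\,L(ud).
\]
By \eqref{eq:L-elementary} and \eqref{eq:N-elementary}, for \(ud\le1\) this is \(\ll (ud)^{-1}\log^2(e/(ud))\). Summing over \(d\le1/u\) with the convergent weight \(d^{-2}\) gives \(\ll u^{-1}\log^2(e/u)\), and the range \(d>1/u\) contributes \(O(1)\). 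This proves \eqref{eq:rho-bulk-bound} with an absolute constant.

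\emph{Conclusion.} Fix \(H=1\) and add the four bounds using \eqref{eq:E-rho-sector-sum}; this gives \eqref{eq:rho-global-bound}. Combining it with \eqref{eq:rho-relation-recalled} and Proposition~\ref{prop:rho-one-bounded}, where \(\mathfrak m_1\mathcal R_{1,3}=O(1)\), yields \eqref{eq:F-minus-F3-bound}. The only point requiring care is the bookkeeping in the bulk: the bound \(\rho_3\ll\log n\) loses the \(1/n\) that would make the sum converge, so the \(n\)-sum must be taken against the full exponential weight \(e^{-udn}\), which is where the factor \(1/u\) comes from. No cancellation from \(\mu\) is used anywhere.
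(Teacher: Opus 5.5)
Your proof is correct and follows the paper's argument: majorize $W_{n,m}(e^{-u})$ by $\sum_{d\ge1}e^{-ud(n+m)}/d$, bound $\rho_3$ by $O_H(1)$ in the core, by $O_H(n^{-5})$ on the edges and by $1+\log n$ in the bulk, then split at $u^{-1}$. Using $L(um)$ in place of the $d$-sum gives the same double sum $\sum_{d,m}e^{-udm}/(dm)$ with the order of summation swapped. The one cosmetic difference is the global bound: you add the four sector bounds at $H=1$, while the paper reruns the bulk argument on the unrestricted sum, and both give an absolute constant.
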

	
	\begin{proof}
		The definition of \(W_{n,m}\) gives
		\begin{equation}\label{eq:W-rho-majorant}
			0\le W_{n,m}(e^{-u})
			\le\sum_{d\ge1}\frac{e^{-ud(n+m)}}d.
		\end{equation}
		There are only finitely many first coordinates and residue classes in
		the core.  Thus \(\rho_3(n,a_{n,m})=O_H(1)\) there, and
		\[
		\begin{aligned}
			\left|\mathcal E_{\rho_3,H}^{\core}(e^{-u})\right|
			&\ll_H
			\sum_{2\le n\le2H}\sum_{d\ge1}
			\frac{e^{-udn}}d
			\sum_{m\ge1}\frac{e^{-udm}}m\\
			&=\sum_{2\le n\le2H}\sum_{d\ge1}
			\frac{e^{-udn}}dL(ud).
		\end{aligned}
		\]
		For \(d\le u^{-1}\), \eqref{eq:L-elementary} gives
		\[
		\sum_{d\le u^{-1}}\frac1d\log\frac e{ud}
		\ll\log^2\frac eu,
		\]
		while \(d>u^{-1}\) contributes \(O_H(1)\).  This proves
		\eqref{eq:rho-core-bound}.
		
		On either fixed edge, \(a_{n,m}\le H\), so
		\eqref{eq:rho-fixed-H} gives
		\(|\rho_3(n,a_{n,m})|\ll_Hn^{-5}\).  Dropping the progression and
		coprimality restrictions,
		\[
		\begin{aligned}
			&\left|\mathcal E_{\rho_3,H}^{+}(e^{-u})\right|
			+\left|\mathcal E_{\rho_3,H}^{-}(e^{-u})\right|\\
			&\qquad\ll_H
			\sum_{n>2H}\frac1{n^5}
			\sum_{d\ge1}\frac{e^{-udn}}dL(ud)
			\ll_H\log^2\frac eu.
		\end{aligned}
		\]
		This proves \eqref{eq:rho-edge-bound}.
		
		For the bulk, use \eqref{eq:rho-global-size} and enlarge the summation
		to all \(n,m\):
		\[
		\begin{aligned}
			\left|\mathcal E_{\rho_3,H}^{\bulk}(e^{-u})\right|
			&\ll
			\sum_{d\ge1}\frac1d
			\left(\sum_{n\ge1}(1+\log n)e^{-udn}\right)
			\left(\sum_{m\ge1}\frac{e^{-udm}}m\right).
		\end{aligned}
		\]
		For \(d\le u^{-1}\), equations
		\eqref{eq:L-elementary}--\eqref{eq:N-elementary} bound the summand by
		\[
		\frac1{ud^2}\log^2\frac e{ud}.
		\]
		Summing gives \(O(u^{-1}\log^2(e/u))\).  For \(d>u^{-1}\), both
		factors decay exponentially, giving \(O(1)\).  This proves
		\eqref{eq:rho-bulk-bound}.  The same argument without a sector
		restriction proves \eqref{eq:rho-global-bound}.  Finally,
		\eqref{eq:F-minus-F3-bound} follows from
		\eqref{eq:rho-relation-recalled} and
		\ref{prop:rho-one-bounded}.
	\end{proof}
	
	\begin{rem}[Dependence on \(H\)]\label{rem:rho-H-dependence}
		The constants in \eqref{eq:rho-core-bound} and
		\eqref{eq:rho-edge-bound} depend on \(H\), because both the number of
		core rows and the Euler--Maclaurin constant in
		\eqref{eq:rho-fixed-H} depend on \(H\).  No estimate proved here
		permits \(H=H(u)\to\infty\).  The total
		\(\mathcal E_{\rho_3}\) and its global bound
		\eqref{eq:rho-global-bound} are independent of \(H\).
	\end{rem}
	
	\subsection{The positive initial edge}
	
	The \(q=0\) part of the positive edge is
	\begin{equation}\label{eq:E-rho-initial}
		\mathcal E_{\rho_3,H}^{\mathrm{init}}(e^{-u})
		:=
		\sum_{a=1}^{H}
		\sum_{\substack{n>2H\\(n,a)=1}}
		\frac{\mu(a)\mu(n)}a
		\rho_3(n,a)W_{n,a}(e^{-u}).
	\end{equation}
	
	\begin{thm}[Initial-edge remainder asymptotic]
		\label{thm:rho-initial-asymptotic}
		For every fixed \(H\ge1\),
		\begin{equation}\label{eq:rho-initial-asymptotic}
			\mathcal E_{\rho_3,H}^{\mathrm{init}}(e^{-u})
			=
			L_{\rho_3,H}^{\mathrm{init}}\log\frac1u
			+C_{\rho_3,H}^{\mathrm{init}}
			+O_H(\sqrt u),
		\end{equation}
		where
		\begin{align}
			L_{\rho_3,H}^{\mathrm{init}}
			&:=
			\sum_{a=1}^{H}
			\sum_{\substack{n>2H\\(n,a)=1}}
			\frac{\mu(a)\mu(n)}a
			\rho_3(n,a)\kappa(an),
			\label{eq:L-rho-initial}\\
			C_{\rho_3,H}^{\mathrm{init}}
			&:=
			\sum_{a=1}^{H}
			\sum_{\substack{n>2H\\(n,a)=1}}
			\frac{\mu(a)\mu(n)}a
			\rho_3(n,a)
			\bigl(
			\beta(an)-\kappa(an)\log(n+a)-\1_{a=1}
			\bigr).
			\label{eq:C-rho-initial}
		\end{align}
		Both series converge absolutely.
	\end{thm}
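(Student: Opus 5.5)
The plan is to reduce the statement to Lemma~\ref{lem:summable-edge}, exactly as in the proof of Theorem~\ref{thm:Iinv}. Fix \(H\) and treat each \(a\le H\) separately; since there are finitely many \(a\), summing the resulting asymptotics over \(a\) with weights \(\mu(a)/a\) gives \eqref{eq:rho-initial-asymptotic}. For a fixed \(a\), the summand is \(\mu(n)\rho_3(n,a)W_{n,a}(e^{-u})\) over \(n>2H\), \((n,a)=1\). By the fixed-residue estimate \eqref{eq:rho-fixed-H} (Theorem~\ref{thm:fixed-residue-third-order}), \(|\rho_3(n,a)|\le C_H n^{-5}\) uniformly for \(a\le H\). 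Writing \(\mu(n)\rho_3(n,a)=q_n n^{-5}\) with \(q_n:=\mu(n)n^5\rho_3(n,a)\), the sequence \((q_n)\) is bounded by \(C_H\). This places us in Lemma~\ref{lem:summable-edge} with \(p=5\) and \(N=2H\).

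The one point to check is that Lemma~\ref{lem:summable-edge} only uses boundedness of \((q_n)\), not periodicity. Its proof splits at \(u(n+a)=1\), applies Corollary~\ref{cor:uniform-W}, and bounds errors by absolute values using \(|q_n|\le\sup|q|\). So it applies verbatim even though \(\rho_3(n,a)\) is not periodic in \(n\). The resulting \(L_{a,5}\) and \(C_{a,5}\), multiplied by \(\mu(a)/a\) and summed over \(a\le H\), are precisely \eqref{eq:L-rho-initial} and \eqref{eq:C-rho-initial}. The error is \(O_{a,H}(\sqrt u)\), hence \(O_H(\sqrt u)\) after the finite sum.

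Absolute convergence follows from \(\kappa\le1\), \eqref{eq:beta-log-bound}, \(\log(n+a)\ll_H \log(2n)\), and \(|\rho_3(n,a)|\ll_H n^{-5}\). The summands are therefore \(O_H(n^{-5}\log 2n)\).

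The main (mild) obstacle is the uniformity of constants. The implied constant in Lemma~\ref{lem:summable-edge} depends on \(a\), \(N\) and \(\sup|q_n|\), and the square-root error sum \(\sum 2^{\omega(n)}\sqrt{n+a}\,n^{-5}\) must converge. Both are harmless here: \(a\le H\), \(N=2H\), \(\sup|q_n|\le C_H\), and \(p=5\) gives ample room. One can even note that the linear error term becomes \(O(u)\) rather than \(O(u\log(1/u))\).
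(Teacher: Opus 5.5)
Your proposal is correct and is essentially the paper's own proof. The paper likewise takes \(\rho_3(n,a)\ll_H n^{-5}\) from \eqref{eq:rho-fixed-H}, applies Lemma~\ref{lem:summable-edge} with \(p=5\) to the bounded sequence \(q_n=\frac{\mu(a)\mu(n)}{a}n^5\rho_3(n,a)\) for each fixed \(a\), and sums over the finitely many \(a\le H\). The only difference is that the paper folds \(\mu(a)/a\) into \(q_n\) instead of keeping it outside, and your observation that only boundedness of \(q_n\) is needed is exactly the lemma's hypothesis.
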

	
	\begin{proof}
		For \(a\le H\), \eqref{eq:rho-fixed-H} gives
		\(|\rho_3(n,a)|\ll_Hn^{-5}\).  For each fixed \(a\), apply
		\ref{lem:summable-edge} with \(p=5\) and
		\[
		q_n:=\frac{\mu(a)\mu(n)}a\,n^5\rho_3(n,a).
		\]
		This sequence is bounded, so \eqref{eq:summable-edge-asymptotic}
		applies.  Its two coefficients are exactly the corresponding
		\(a\)-parts of \eqref{eq:L-rho-initial} and
		\eqref{eq:C-rho-initial}.  Summing over the finitely many
		\(a\le H\) proves the theorem, including the absolute convergence and
		the uniform \(O_H(\sqrt u)\) error.
	\end{proof}
	
	\begin{rem}\label{rem:small-rho-log}
		There is no algebraic reason for the explicit coefficient
		\(L_{\rho_3,H}^{\mathrm{init}}\) to vanish.  Thus the strong estimate
		\(\rho_3(n,a)=O_H(n^{-5})\) does not by itself imply that the
		initial-edge remainder is bounded: each fixed pair is multiplied by
		\(W_{n,a}(e^{-u})\), whose leading term is
		\(\kappa(an)\log(1/u)\).  A nonzero logarithm must be cancelled by
		another part of the exact remainder.
	\end{rem}
	
	\subsection{The transposition defect}
	
	Recall from \eqref{eq:delta}--\eqref{eq:delta-hidden-rho} that
	\[
	\delta_a(n)=\eps_{a,n}\rho_3(a,a_{a,n})
	\quad(a\ge2),\qquad \delta_1(n)=0.
	\]
	For fixed \(a\), this is a bounded periodic function of \(n\bmod a\).
	The part of the core remainder obtained by transposing the positive
	initial pairs is
	\begin{equation}\label{eq:D-rho-H}
		\mathcal D_{\rho_3,H}(x)
		:=
		\sum_{a=1}^{H}
		\sum_{\substack{n>2H\\(n,a)=1}}
		\frac{\mu(a)\mu(n)}n\delta_a(n)W_{n,a}(x).
	\end{equation}
	
	\begin{prop}[Exact cancellation of the transposition defect]
		\label{prop:delta-rho-cancellation}
		In the canonically paired expression for \(\Fthree\), the ordered pair
		\((n,a)\) and its transpose \((a,n)\) contain the term
		\[
		-\frac{\mu(a)\mu(n)}n\delta_a(n)W_{n,a}(x).
		\]
		The ordered transposed remainder in
		\(\mathcal E_{\rho_3}\) is the opposite term
		\[
		+\frac{\mu(a)\mu(n)}n\delta_a(n)W_{n,a}(x).
		\]
		The two cancel identically in \(F\), before any estimate is made.
	\end{prop}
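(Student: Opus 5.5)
The plan is to compute the contribution of the transposed pair \((a,n)\), with \(1\le a\le H<n/2\) and \((a,n)=1\), in two ways and compare. In both \(\Fthree\) and \(F\) this pair enters the primitive-pair sum \eqref{eq:R3} with the same weight \(\mu(a)\mu(n)W_{a,n}(x)/n\), and \(W_{a,n}=W_{n,a}\) by Definition~\ref{def:dilation}.

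\textbf{Step 1: the term in \(\Fthree\).} By \eqref{eq:Psi}, the factor multiplying this weight is
\[
\Psi_{[3]}(a,n)=\log\frac na-\cstar+T_{[3]}(a,n).
\]
Here \(T_{[3]}(a,n)=\eps_{a,n}\Kfour(a,a_{a,n})\) when \(a\ge2\), and \(T_{[3]}(1,n)=0\). By the definition \eqref{eq:delta} of the defect,
\[
T_{[3]}(a,n)=\frac{\pi A(a,n)}a-\delta_a(n),
\]
and this also holds for \(a=1\), since \(A(1,n)=0\) and \(\delta_1=0\). So in the canonically paired expression \eqref{eq:canonical-paired-defect}, once \(\pi A(a,n)/a\) is written out via Lemma~\ref{lem:C0-identity}, the pair contributes exactly \(-\mu(a)\mu(n)\delta_a(n)W_{n,a}(x)/n\). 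This is the first assertion, and it is bookkeeping from \eqref{eq:canonical-paired-initial}.

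\textbf{Step 2: the term in \(\mathcal E_{\rho_3}\).} The ordered pair \((a,n)\) has first coordinate \(a\) and second coordinate \(n\), so \(\mathcal E_{\rho_3}\) in \eqref{eq:E-rho-global} contains
\[
\eps_{a,n}\frac{\mu(a)\mu(n)}n\,\rho_3(a,a_{a,n})\,W_{a,n}(x).
\]
For \(a\ge2\) this pair lies in the core, since \(a\le 2H\). By \eqref{eq:delta-hidden-rho} the coefficient is \(\delta_a(n)\), which gives exactly \(+\mu(a)\mu(n)\delta_a(n)W_{n,a}(x)/n\). For \(a=1\), the row \(n=1\) carries no \(\rho_3\) term in \eqref{eq:E-rho-global}, and \(\delta_1=0\), so both sides vanish.

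\textbf{Step 3: the identity in \(F\).} By Proposition~\ref{prop:F-F3-relation}, \(F=\Fthree+\mathcal E_{\rho_3}-\mathfrak m_1\mathcal R_{1,3}\). The term \(\mathfrak m_1\mathcal R_{1,3}\) involves only \(\rho_3(n,1)\) in one variable and not the transposed pairs. Adding the contributions from Steps 1 and 2 therefore gives zero pair by pair. Summing over \(a\le H\) and \(n>2H\) shows that \(-\mathcal D_{\rho_3,H}\) inside \(\Fthree\) and \(+\mathcal D_{\rho_3,H}\) inside \(\mathcal E_{\rho_3}\) cancel. Absolute convergence for \(x<1\), noted after Proposition~\ref{prop:F-F3-relation}, justifies isolating these subseries.

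\textbf{Main obstacle.} The only delicate point is sign and index bookkeeping. One must check that \(\eps_{a,n}\) in \(T_{[3]}(a,n)\) and in \(\mathcal E_{\rho_3}\) is the same sign, and that the factor is \(1/n\) (the second coordinate) rather than \(1/a\). Both hold because both series are built from the same ordered-pair formula \eqref{eq:A-K-rho} with \(m\) replaced by \(n\). This is exactly the identity \(\pi A(a,n)/a = T_{[3]}(a,n)+\delta_a(n)\).
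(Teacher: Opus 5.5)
Your proposal is correct and follows the paper's route. You use the definition \eqref{eq:delta} to write $T_{[3]}(a,n)=\pi A(a,n)/a-\delta_a(n)$, which extracts the $-\mu(a)\mu(n)\delta_a(n)W_{n,a}(x)/n$ term, and you read off the $(a,n)$-summand of $\mathcal E_{\rho_3}$ and convert it via \eqref{eq:delta-hidden-rho} and $W_{a,n}=W_{n,a}$. Your extra checks, namely the case $a=1$ and that $\mathfrak m_1\mathcal R_{1,3}$ does not involve these pairs, are sound and fill in details the paper leaves implicit.
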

	
	\begin{proof}
		By definition,
		\[
		\frac{\pi A(a,n)}a=T_{[3]}(a,n)+\delta_a(n).
		\]
		Replacing the exact transposed cotangent term by its third-order
		truncation subtracts \(\delta_a(n)\), giving the first display.  The
		\((a,n)\)-summand of \(\mathcal E_{\rho_3}\) is
		\[
		\eps_{a,n}\frac{\mu(a)\mu(n)}n
		\rho_3(a,a_{a,n})W_{a,n}(x).
		\]
		Use the definition of \(\delta_a\) and
		\(W_{a,n}=W_{n,a}\) to obtain the second display.
	\end{proof}
	
	\begin{prop}[Bound for the defect block]\label{prop:delta-rho-bound}
		For fixed \(H\),
		\begin{equation}\label{eq:delta-rho-bound}
			|\mathcal D_{\rho_3,H}(e^{-u})|
			\ll_H\log^2\frac eu.
		\end{equation}
	\end{prop}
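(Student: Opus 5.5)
The plan is to bound the defect block by absolute values, exactly as was done for the edge sectors in Theorem~\ref{thm:rho-sector-bounds}. No cancellation from \(\mu\) should be needed, because \(\delta_a(n)\) is bounded. Indeed, for fixed \(a\le H\), \eqref{eq:delta-hidden-rho} expresses \(\delta_a(n)\) as \(\pm\rho_3(a,a_{a,n})\). This quantity depends only on \(n \bmod a\) and takes at most \(\varphi(a)\) values, each a fixed number \(\rho_3(a,b)\) with \(1\le b\le a/2\); also \(\delta_1=0\). Hence
\[
\max_{a\le H}\ \sup_{n}|\delta_a(n)|\ll_H 1.
\]
If one wants an explicit constant, Lemma~\ref{lem:rho-global-size} gives \(|\delta_a(n)|\ll 1+\log a\le 1+\log H\).

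Inserting this bound into \eqref{eq:D-rho-H} and dropping the coprimality conditions and the M\"obius factors gives
\[
|\mathcal D_{\rho_3,H}(e^{-u})|\ll_H\sum_{a=1}^{H}\sum_{n>2H}\frac1n\,W_{n,a}(e^{-u}).
\]
We then use the majorant \eqref{eq:W-rho-majorant}, namely \(W_{n,a}(e^{-u})\le\sum_{d\ge1}d^{-1}e^{-ud(n+a)}\), together with \(e^{-uda}\le1\). Interchanging the order of summation yields
\[
|\mathcal D_{\rho_3,H}(e^{-u})|\ll_H H\sum_{d\ge1}\frac1d\sum_{n\ge1}\frac{e^{-udn}}n=H\sum_{d\ge1}\frac{L(ud)}d .
\]
Here \(L\) is the function in \eqref{eq:L-elementary}.

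The final step is the same dyadic-free split already used for the core bound \eqref{eq:rho-core-bound}. For \(d\le u^{-1}\), \eqref{eq:L-elementary} bounds the summand by \(d^{-1}\log(e/(ud))\). Summing over this range gives
\[
\sum_{d\le u^{-1}}\frac1d\log\frac{e}{ud}\ll\log^2\frac eu .
\]
For \(d>u^{-1}\), the bound \(L(ud)\ll e^{-ud}\) gives the contribution \(\sum_{d>1/u}d^{-1}e^{-ud}=O(1)\). Together these prove \eqref{eq:delta-rho-bound}.

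There is no real obstacle in this argument. The only point requiring care is the first step: the bound \(|\delta_a(n)|\ll_H1\) must be uniform in \(n\), and this relies on the periodicity of \(\delta_a\) modulo \(a\) rather than on any asymptotic in \(n\). The weight \(1/n\) in \eqref{eq:D-rho-H} plays the role of the \(1/m\) in the core estimate, and this is exactly what produces the \(\log^2\) rather than a \(u^{-1}\) loss.
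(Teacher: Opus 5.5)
Your argument is correct and is essentially the paper's proof. Both bound the finitely many periodic defects $\delta_a$, $a\le H$, by an $H$-dependent constant, majorize $W_{n,a}(e^{-u})$ by $\sum_{d\ge1} d^{-1}e^{-ud(n+a)}$ to reduce to $\sum_{a\le H}\sum_{d\ge1} d^{-1}e^{-uda}L(ud)$, and split at $d=u^{-1}$ as in the core bound; the only cosmetic difference is that you discard the factor $e^{-uda}\le 1$ at the cost of a harmless factor $H$, whereas the paper keeps it.
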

	
	\begin{proof}
		Since the finitely many functions \(\delta_a\), \(a\le H\), are
		bounded,
		\[
		\begin{aligned}
			|\mathcal D_{\rho_3,H}(e^{-u})|
			&\ll_H
			\sum_{a\le H}\sum_{d\ge1}\frac{e^{-uda}}d
			\sum_{n\ge1}\frac{e^{-udn}}n\\
			&=\sum_{a\le H}\sum_{d\ge1}\frac{e^{-uda}}dL(ud).
		\end{aligned}
		\]
		The same division at \(d=u^{-1}\) used in the proof of
		\eqref{eq:rho-core-bound} proves the result.
	\end{proof}
	
	The Dirichlet--Abel density \(\mathcal D_a\) associated with this
	periodic defect exists by \eqref{eq:Da}--\eqref{eq:twisted-density-factor}.
	It is not yet an Abel asymptotic for
	\(\mathcal D_{\rho_3,H}(e^{-u})\).  Indeed, the square-root part of the
	uniform dilation error, estimated absolutely against
	\(\delta_a(n)/n\), is
	\[
	\sqrt u\sum_{n\le u^{-1}}\frac{2^{\omega(n)}}{\sqrt n}
	\ll\log\frac eu.
	\]
	Here we used the standard mean-value estimate
	\(\sum_{n\le X}2^{\omega(n)}\ll X\log(2X)\); see, for example,
	\cite{Ten95}.  This absolute-value
	estimate does not tend to zero and therefore does not prove decay of
	the summed dilation error; additional M\"obius cancellation is required.
	
	\subsection{Finite-scale form of the remaining problem}
	
	For \(n\ge2\), \(m\ge1\), and \((n,m)=1\), put
	\begin{equation}\label{eq:q-rho}
		q^{\rho_3}_{n,m}:=
		\eps_{n,m}\frac{\mu(n)\mu(m)}m
		\rho_3(n,a_{n,m}).
	\end{equation}
	Let \(0<u\le1/2\), set \(X=u^{-1}\), and, for \(n+m\le X\), define
	\begin{align}
		\Delta_{n,m}(u):={}&
		W_{n,m}(e^{-u})
		-\kappa(nm)\left(\log\frac1u-\log(n+m)\right)\notag\\
		&-\beta(nm)+\1_{\min(n,m)=1}.
		\label{eq:Delta-rho}
	\end{align}
	Further, put
	\begin{align}
		\mathfrak A_{\rho_3}(X)
		&:=
		\sum_{\substack{n\ge2,\ m\ge1,\ (n,m)=1\\n+m\le X}}
		q^{\rho_3}_{n,m}\kappa(nm),
		\label{eq:A-rho-X}\\
		\mathfrak C_{\rho_3}(X)
		&:=
		\sum_{\substack{n\ge2,\ m\ge1,\ (n,m)=1\\n+m\le X}}
		q^{\rho_3}_{n,m}
		\bigl(
		\beta(nm)-\kappa(nm)\log(n+m)
		-\1_{\min(n,m)=1}
		\bigr),
		\label{eq:C-rho-X}\\
		\mathfrak U_{\rho_3}(u)
		&:=
		\sum_{\substack{n\ge2,\ m\ge1,\ (n,m)=1\\n+m\le X}}
		q^{\rho_3}_{n,m}\Delta_{n,m}(u),
		\label{eq:U-rho-u}\\
		\mathfrak V_{\rho_3}(u)
		&:=
		\sum_{\substack{n\ge2,\ m\ge1,\ (n,m)=1\\n+m>X}}
		q^{\rho_3}_{n,m}W_{n,m}(e^{-u}).
		\label{eq:V-rho-u}
	\end{align}
	
	\begin{thm}[Exact finite-scale remainder formula]
		\label{thm:rho-finite-scale}
		For \(0<u\le1/2\),
		\begin{equation}\label{eq:rho-finite-scale}
			\mathcal E_{\rho_3}(e^{-u})
			=
			\log\frac1u\,\mathfrak A_{\rho_3}(u^{-1})
			+\mathfrak C_{\rho_3}(u^{-1})
			+\mathfrak U_{\rho_3}(u)
			+\mathfrak V_{\rho_3}(u).
		\end{equation}
		Moreover,
		\begin{equation}\label{eq:U-rho-bound}
			|\mathfrak U_{\rho_3}(u)|
			\ll
			\sum_{\substack{n\ge2,\ m\ge1,\ (n,m)=1\\n+m\le u^{-1}}}
			\frac{|\rho_3(n,a_{n,m})|}{m}
			\left(
			2^{\omega(nm)}\sqrt{u(n+m)}+u(n+m)
			\right).
		\end{equation}
		Each quantity in \eqref{eq:rho-finite-scale} admits the four-sector
		decomposition for every fixed \(H\), and its total is independent of
		\(H\).
	\end{thm}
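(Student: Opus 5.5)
The proof will mirror Theorem~\ref{thm:finite-scale} essentially line by line, with \(q_{n,m}\) replaced by \(q^{\rho_3}_{n,m}\). We begin from the definition \eqref{eq:E-rho-global}, which reads
\[
\mathcal E_{\rho_3}(x)=\sum_{n\ge2,\,m\ge1,\,(n,m)=1}q^{\rho_3}_{n,m}W_{n,m}(x).
\]
This series converges absolutely for \(0<x<1\), by Lemma~\ref{lem:rho-global-size} together with \(0\le W_{n,m}(x)\le-\log(1-x^{n+m})\). We may therefore split it at \(n+m=X=u^{-1}\) without any rearrangement issue. The far range \(n+m>X\) is kept exactly, and it is by definition \(\mathfrak V_{\rho_3}(u)\).

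In the near range \(n+m\le X\), rearranging \eqref{eq:Delta-rho} gives the exact identity
\[
W_{n,m}(e^{-u})=\kappa(nm)\log\tfrac1u+\bigl(\beta(nm)-\kappa(nm)\log(n+m)-\1_{\min(n,m)=1}\bigr)+\Delta_{n,m}(u).
\]
We multiply this by \(q^{\rho_3}_{n,m}\) and sum over the finite set \(n+m\le X\). The three pieces are exactly \(\log(1/u)\,\mathfrak A_{\rho_3}(X)\), \(\mathfrak C_{\rho_3}(X)\) and \(\mathfrak U_{\rho_3}(u)\). Since all these sums are finite, no convergence question arises, and \eqref{eq:rho-finite-scale} follows.

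For \eqref{eq:U-rho-bound}, observe that \(n+m\le u^{-1}\) means \(u(n+m)\le1\). This is precisely the range of \eqref{eq:uniform-W} in Corollary~\ref{cor:uniform-W}, and that corollary gives
\[
|\Delta_{n,m}(u)|\ll2^{\omega(nm)}\sqrt{u(n+m)}+u(n+m)
\]
with an absolute constant. We also use \(|q^{\rho_3}_{n,m}|\le|\rho_3(n,a_{n,m})|/m\), since \(|\eps\mu\mu|\le1\). Applying the triangle inequality to the finite sum \(\mathfrak U_{\rho_3}\) then yields \eqref{eq:U-rho-bound}.

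The sector statement needs no new ideas. For each fixed \(H\), the index sets \eqref{eq:core-set}--\eqref{eq:bulk-set} partition the pairs with \(n\ge2\), so each of the four finite or absolutely convergent sums splits accordingly. \(H\)-independence of the totals follows because only the index sets move, exactly as in Proposition~\ref{prop:H-transfer}: the same row and layer transfers cancel in the sum of sectors, and the argument is valid verbatim for the summands \(q^{\rho_3}_{n,m}\) times any of the four weights. The only point needing care, and the one I would check most closely, is that Corollary~\ref{cor:uniform-W} is applied only when \(u(n+m)\le1\); this includes the boundary \(n+m=X\), which is placed in the near range. I should also check that the \(\1_{\min(n,m)=1}\) correction matches the sign convention of \eqref{eq:W-D}. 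Beyond that, the argument is bookkeeping.
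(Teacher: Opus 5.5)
Your proposal is correct and follows the paper's proof essentially verbatim. You split \eqref{eq:E-rho-global} at \(n+m=u^{-1}\), keep the far range as \(\mathfrak V_{\rho_3}\), and rewrite \(W_{n,m}\) in the near range via the definition of \(\Delta_{n,m}\). You then bound \(\Delta_{n,m}\) by Corollary~\ref{cor:uniform-W}, whose indicator sign does match \eqref{eq:W-D}, and obtain the sector statement from the exact partition of ordered primitive pairs together with Proposition~\ref{prop:H-transfer}. Your additional checks — absolute convergence via Lemma~\ref{lem:rho-global-size}, \(|q^{\rho_3}_{n,m}|\le|\rho_3(n,a_{n,m})|/m\), and the boundary case \(u(n+m)=1\) — only make explicit what the paper leaves implicit.
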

	
	\begin{proof}
		Split \eqref{eq:E-rho-global} at \(n+m=u^{-1}\).  In the near range,
		add and subtract the main and constant terms in
		\eqref{eq:uniform-W}.  This gives the first three terms on the right of
		\eqref{eq:rho-finite-scale}; the far range is
		\(\mathfrak V_{\rho_3}\).  The uniform estimate for
		\(\Delta_{n,m}(u)\) gives \eqref{eq:U-rho-bound}.  The sector statement
		follows from the exact partition of the ordered primitive pairs.
	\end{proof}
	
	\begin{coro}[A sufficient boundedness criterion]
		\label{cor:rho-boundedness-criterion}
		If, as \(X\to\infty\),
		\begin{equation}\label{eq:rho-four-bounds}
			\begin{aligned}
				\mathfrak A_{\rho_3}(X)&=O\!\left(\frac1{\log X}\right),&
				\mathfrak C_{\rho_3}(X)&=O(1),\\
				\mathfrak U_{\rho_3}(X^{-1})&=O(1),&
				\mathfrak V_{\rho_3}(X^{-1})&=O(1),
			\end{aligned}
		\end{equation}
		then
		\[
		\mathcal E_{\rho_3}(x)=O(1)\qquad(x\uparrow1),
		\]
		and hence \(F-\Fthree=O(1)\).  More sharply, boundedness is equivalent
		to the single combined estimate
		\begin{equation}\label{eq:rho-sharp-criterion}
			\log X\,\mathfrak A_{\rho_3}(X)
			+\mathfrak C_{\rho_3}(X)
			+\mathfrak U_{\rho_3}(X^{-1})
			+\mathfrak V_{\rho_3}(X^{-1})
			=O(1).
		\end{equation}
	\end{coro}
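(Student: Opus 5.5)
The plan is to insert the four hypotheses \eqref{eq:rho-four-bounds} directly into the exact identity \eqref{eq:rho-finite-scale} of Theorem~\ref{thm:rho-finite-scale}. Fix \(x\) close to \(1\) and put \(u=-\log x\), so \(u\downarrow0\) as \(x\uparrow1\). We may take \(u\le1/2\), and we set \(X=u^{-1}\to\infty\).

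The first step handles the logarithmic term. Since \(\log(1/u)=\log X\), the hypothesis \(\mathfrak A_{\rho_3}(X)=O(1/\log X)\) gives
\[
\log\frac1u\,\mathfrak A_{\rho_3}(u^{-1})=O(1).
\]
The remaining three terms \(\mathfrak C_{\rho_3}(X)\), \(\mathfrak U_{\rho_3}(X^{-1})\) and \(\mathfrak V_{\rho_3}(X^{-1})\) are each \(O(1)\) by assumption. Summing the four contributions in \eqref{eq:rho-finite-scale} yields \(\mathcal E_{\rho_3}(e^{-u})=O(1)\). This is exactly \(\mathcal E_{\rho_3}(x)=O(1)\) as \(x\uparrow1\).

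The second step passes to \(F-\Fthree\). Proposition~\ref{prop:rho-one-bounded} shows that \(\mathfrak m_1\mathcal R_{1,3}=O(1)\). Combining this with identity \eqref{eq:rho-relation-recalled}, equivalently \eqref{eq:rho-equivalence}, gives \(F-\Fthree=O(1)\).

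For the sharper equivalence \eqref{eq:rho-sharp-criterion}, observe that \eqref{eq:rho-finite-scale} is an exact identity valid for every \(0<u\le1/2\). Substituting \(u=X^{-1}\) with \(X\ge2\), its right-hand side is precisely the combined expression in \eqref{eq:rho-sharp-criterion}. Hence that expression is \(O(1)\) as \(X\to\infty\) if and only if \(\mathcal E_{\rho_3}(e^{-1/X})=O(1)\). Since \(x=e^{-1/X}\) sweeps out every \(x\in[e^{-1/2},1)\) as \(X\) runs over \([2,\infty)\), this is the same as \(\mathcal E_{\rho_3}(x)=O(1)\) as \(x\uparrow1\). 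The exact identity therefore gives the equivalence in both directions, with no loss.

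There is no real obstacle here. The only point needing care is the bookkeeping between the variables \(u\), \(X=u^{-1}\) and \(x=e^{-u}\), together with checking that the range \(u\le1/2\) of Theorem~\ref{thm:rho-finite-scale} covers a full left neighbourhood of \(x=1\).
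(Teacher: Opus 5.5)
Your proposal is correct and follows the paper's own argument: insert the four hypotheses into the exact identity \eqref{eq:rho-finite-scale} with $X=u^{-1}$, read the equivalence off that same identity, and pass to $F-\mathcal F_{[3]}$ via \eqref{eq:rho-relation-recalled} and Proposition~\ref{prop:rho-one-bounded}, i.e.\ \eqref{eq:rho-equivalence}. Your explicit check that $u\in(0,1/2]$ corresponds to the full left neighbourhood $[e^{-1/2},1)$ of $x=1$ is a detail the paper leaves implicit.
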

	
	\begin{proof}
		The sufficient statement follows by inserting
		\eqref{eq:rho-four-bounds} into
		\eqref{eq:rho-finite-scale}.  The equivalence
		\eqref{eq:rho-sharp-criterion} is exactly
		\eqref{eq:rho-finite-scale} with \(X=u^{-1}\).  Finally use
		\eqref{eq:rho-equivalence}.
	\end{proof}
	
	\subsection{What is proved and what remains}
	
	The conclusions about the discarded terms can now be stated precisely.
	\begin{enumerate}
		\item The one-variable term
		\(\mathfrak m_1\mathcal R_{1,3}\) is bounded and has a finite limit at
		\(x=1\).
		
		\item For every fixed \(H\), the core and the two fixed-edge remainder
		terms are \(O_H(\log^2(e/u))\), whereas the bulk is presently
		controlled only by
		\[
		O\!\left(u^{-1}\log^2(e/u)\right).
		\]
		
		\item The positive initial-edge remainder has the rigorous asymptotic
		\eqref{eq:rho-initial-asymptotic}.  Its logarithmic coefficient is
		absolutely convergent but is not known to vanish.
		
		\item The transposition defect cancels exactly when the truncated and
		remainder terms are recombined.  Considered separately, its coefficient
		is only of order \(1/n\), and the uniform dilation error does not give a
		bounded Abel sum.
		
		\item The total \(\mathcal E_{\rho_3}\) is independent of \(H\), but
		the constants in the fixed-residue estimates are not uniform in \(H\).
		It is therefore not legitimate to take \(H=H(u)\to\infty\) using
		\eqref{eq:rho-fixed-H}.
		
		\item In the bulk, \(a_{n,m}\) may be comparable with \(n\), and
		\eqref{eq:rho-global-size} supplies no decay.  Even on fixed edges,
		absolute estimates leave logarithms because the dilation kernel itself
		grows like \(\log(1/u)\).
	\end{enumerate}
	
	Thus the exact unresolved assertion is
	\eqref{eq:rho-sharp-criterion}.  The four estimates in
	\eqref{eq:rho-four-bounds} would suffice, but cancellation may occur
	between the four displayed terms, so the combined criterion is the
	natural target.  In particular, boundedness cannot yet be transferred
	between \(F\) and \(\Fthree\) until the total remainder
	\(\mathcal E_{\rho_3}\) has been controlled.
	
	The power-saving estimates for related M\"obius--cotangent sums in
	\cite{MaierRassias2018Sums,MaierRassias2019Explicit} concern different
	summation regions and weights.  Without a further bilinear transference
	argument they do not establish \eqref{eq:rho-sharp-criterion}.

	
	\medskip
	
	\bibliographystyle{alpha}
	\addcontentsline{toc}{section}{References}
	\bibliography{Riemann_F3}

@article{Baez2003,
	author = {Báez-Duarte, Luis},
	journal = {Atti della Accademia Nazionale dei Lincei.},
	language = {eng},
	month = {3},
	number = {1},
	pages = {5-11},
	publisher = {Accademia Nazionale dei Lincei},
	title = {A strengthening of the Nyman-Beurling criterion for the Riemann hypothesis},
	url = {http://eudml.org/doc/252348},
	volume = {14},
	year = {2003},
}

@article{BAEZDUARTE19991,
	title = {A Class of Invariant Unitary Operators},
	journal = {Advances in Mathematics},
	volume = {144},
	number = {1},
	pages = {1-12},
	year = {1999},
	issn = {0001-8708},
	doi = {https://doi.org/10.1006/aima.1998.1801},
	url = {https://www.sciencedirect.com/science/article/pii/S0001870898918011},
	author = {Luis Báez-Duarte},
}

@article{BD2002,
	title = {Arithmetical versions of Nyman-Beurling Criterion for Riemann Hypothesis},
	journal = {IJMMS},
	volume = {1324},
	year = {2002},
	url = {https://arxiv.org/pdf/math/0011254},
	author = {Luis Báez-Duarte},
}

@article{BD22002,
	title = {New Versions of the Nyman-Beurling Criterion for the Riemann Hypothesis,},
	journal = {IJMMS},
	volume = {31:7},
	pages = {387-406},
	year = {2002},
	author = {Luis Báez-Duarte},
}

@book {Ten95,
	AUTHOR = {Tenenbaum, Gérald},
	TITLE = {Introduction to Analytic and Probabilistic Number Theory},
	SERIES = {Cambridge studies in advanced mathematics},
	VOLUME = {46},
	PUBLISHER = {Cambridge University Press},
	ADDRESS = {Cambridge},
	YEAR = {1995},
	ISBN = {0-521-41261-7},
	
}

@article{FRO,
	title = {Numerical studies of the Möbius power series},
	journal = {BIT Numerical Mathematics},
	year = {1966},
	author = {Froberg, Carl-Erik},
}

@article{BettinConrey2013Period,
	author  = {Bettin, Sandro and Conrey, J. Brian},
	title   = {Period Functions and Cotangent Sums},
	journal = {Algebra \& Number Theory},
	volume  = {7},
	number  = {1},
	pages   = {215--242},
	year    = {2013},
	doi     = {10.2140/ant.2013.7.215},
	eprint  = {1111.0931},
	archivePrefix = {arXiv},
	primaryClass  = {math.NT}
}

@article{BettinConrey2013Reciprocity,
	author  = {Bettin, Sandro and Conrey, J. Brian},
	title   = {A Reciprocity Formula for a Cotangent Sum},
	journal = {International Mathematics Research Notices},
	volume  = {2013},
	number  = {24},
	pages   = {5709--5726},
	year    = {2013},
	doi     = {10.1093/imrn/rns211}
}

@article{Bettin2015,
	author  = {Bettin, Sandro},
	title   = {On the Distribution of a Cotangent Sum},
	journal = {International Mathematics Research Notices},
	volume  = {2015},
	number  = {21},
	pages   = {11419--11432},
	year    = {2015},
	doi     = {10.1093/imrn/rnv036},
	eprint  = {1411.2293},
	archivePrefix = {arXiv},
	primaryClass  = {math.NT}
}

@article{MaierRassias2016,
	author  = {Maier, Helmut and Rassias, Michael Th.},
	title   = {Generalizations of a Cotangent Sum Associated to the
		{Estermann} Zeta Function},
	journal = {Communications in Contemporary Mathematics},
	volume  = {18},
	number  = {1},
	pages   = {1550078},
	year    = {2016},
	doi     = {10.1142/S0219199715500789},
	eprint  = {1410.2145},
	archivePrefix = {arXiv},
	primaryClass  = {math.NT}
}

@article{Rassias2014,
	author  = {Rassias, Michael Th.},
	title   = {A Cotangent Sum Related to Zeros of the {Estermann} Zeta
		Function},
	journal = {Applied Mathematics and Computation},
	volume  = {240},
	pages   = {161--167},
	year    = {2014},
	doi     = {10.1016/j.amc.2014.04.086}
}

@article{Vasyunin1996,
	author  = {Vasyunin, V. I.},
	title   = {On a Biorthogonal System Associated with the {Riemann}
		Hypothesis},
	journal = {St. Petersburg Mathematical Journal},
	volume  = {7},
	number  = {3},
	pages   = {405--419},
	year    = {1996},
	note    = {English translation of Algebra i Analiz 7 (1995), no. 3,
		118--135}
}

@book{Olver1997,
	author    = {Olver, Frank W. J.},
	title     = {Asymptotics and Special Functions},
	publisher = {A K Peters},
	address   = {Wellesley, MA},
	year      = {1997},
	note      = {Reprint of the 1974 Academic Press edition},
	isbn      = {978-1-56881-069-0}
}

@book{NIST2010,
	editor    = {Olver, Frank W. J. and Lozier, Daniel W. and Boisvert,
		Ronald F. and Clark, Charles W.},
	title     = {{NIST} Handbook of Mathematical Functions},
	publisher = {Cambridge University Press},
	address   = {Cambridge},
	year      = {2010},
	isbn      = {978-0-521-19225-5},
	url       = {https://dlmf.nist.gov/}
}

@book{Davenport2000,
	author    = {Davenport, Harold},
	title     = {Multiplicative Number Theory},
	edition   = {3},
	series    = {Graduate Texts in Mathematics},
	volume    = {74},
	publisher = {Springer-Verlag},
	address   = {New York},
	year      = {2000},
	note      = {Revised and with a preface by Hugh L. Montgomery},
	isbn      = {978-0-387-95097-6}
}

@article{DarsesHillion2021,
	author  = {Darses, S\'ebastien and Hillion, Erwan},
	title   = {An Exponentially Averaged {Vasyunin} Formula},
	journal = {Proceedings of the American Mathematical Society},
	volume  = {149},
	number  = {7},
	pages   = {2969--2982},
	year    = {2021},
	doi     = {10.1090/proc/15422},
	eprint  = {2004.10086},
	archivePrefix = {arXiv},
	primaryClass  = {math.NT}
}

@article{MaierRassias2018Sums,
	author  = {Maier, Helmut and Rassias, Michael Th.},
	title   = {Estimates of Sums Related to the {Nyman--Beurling}
		Criterion for the {Riemann Hypothesis}},
	journal = {Journal of Number Theory},
	volume  = {188},
	pages   = {96--120},
	year    = {2018},
	doi     = {10.1016/j.jnt.2017.12.012},
	eprint  = {1705.09921},
	archivePrefix = {arXiv},
	primaryClass  = {math.CA}
}

@article{MaierRassias2019Explicit,
	author  = {Maier, Helmut and Rassias, Michael Th.},
	title   = {Explicit Estimates of Sums Related to the
		{Nyman--Beurling} Criterion for the {Riemann Hypothesis}},
	journal = {Journal of Functional Analysis},
	volume  = {276},
	number  = {12},
	pages   = {3832--3857},
	year    = {2019},
	doi     = {10.1016/j.jfa.2018.06.022},
	eprint  = {1806.05070},
	archivePrefix = {arXiv},
	primaryClass  = {math.CA}
}

@article{Korobov1958,
	author  = {Korobov, N. M.},
	title   = {Estimates of Trigonometric Sums and Their Applications},
	journal = {Uspekhi Matematicheskikh Nauk},
	volume  = {13},
	number  = {4(82)},
	pages   = {185--192},
	year    = {1958},
	note    = {In Russian}
}

@article{Vinogradov1958,
	author  = {Vinogradov, I. M.},
	title   = {A New Estimate for {$\zeta(1+it)$}},
	journal = {Izvestiya Akademii Nauk SSSR. Seriya Matematicheskaya},
	volume  = {22},
	pages   = {161--164},
	year    = {1958},
	note    = {In Russian}
}
	\nocite{*}
	
	\textit{E-mail address}: a413xpyv@hotmail.com
\end{document}